\documentclass[a4paper,10pt,reqno]{amsart}
\usepackage{mathtools}
\usepackage{amssymb}
\usepackage{slashed}
\usepackage{hyperref}
\usepackage{enumitem}
\usepackage{array, booktabs}
\usepackage{multirow}
\hypersetup{
    colorlinks,
    citecolor=black,
    filecolor=black,
    linkcolor=black,
    urlcolor=black
}
\usepackage{orcidlink}

\newcolumntype{M}[1]{>{\centering\arraybackslash}m{#1}}
\theoremstyle{plain}
\newtheorem*{theorem*}{Theorem}
\newtheorem{theorem}{Theorem}[section]

\newtheorem{proposition}[theorem]{Proposition}
\newtheorem{lemma}[theorem]{Lemma}
\newtheorem{corollary}[theorem]{Corollary}
\theoremstyle{definition}
\newtheorem{question}[theorem]{Question}

\theoremstyle{remark}
\newtheorem{remark}[theorem]{Remark}

\newcommand{\dd}{\mathrm{d}}

\title{Left-invariant harmonic spinors on three-dimensional Lie groups}

\author[Alejandro Gil-García]{Alejandro Gil-García \orcidlink{0000-0002-9370-241X}}
\address{Mathematics Area, SISSA - Scuola Internazionale Superiore di Studi Avanzati,
via Bonomea 265, 34136 Trieste (TS), Italy}
\email{agilgarc@sissa.it}

\author[Giovanni Russo]{Giovanni Russo \orcidlink{0000-0002-0593-6719}}
\address{Mathematics Area, SISSA - Scuola Internazionale Superiore di Studi Avanzati,
via Bonomea 265, 34136 Trieste (TS), Italy}
\email{girusso@sissa.it}

\begin{document}

\begin{abstract}
We study the existence of left-invariant harmonic spinors on three-dimensional Lie groups equipped with a left-invariant pseudo-Riemannian metric. An existing formula for the spin Dirac operator acting on left-invariant spinors in the Riemannian setting is revised and specialised to our cases, in particular to almost Abelian Lie algebras. Focussing on dimension two and three, we find equivalent conditions for the Lie groups to admit left-invariant harmonic spinors in terms of constraints on the structure equations of the corresponding Lie algebras. We then identify those metrics (up to automorphism) carrying left-invariant harmonic spinors in each case.\bigskip

\noindent
\emph{Keywords:} Lie groups, Lie algebras, pseudo-Riemannian metrics, Dirac operator, harmonic spinors.\medskip

\noindent
\emph{MSC2020:} 22E60, 15A66, 53B30, 53C27.

\end{abstract}

\maketitle

\tableofcontents

\section*{Introduction}

The spin Dirac operator $\slashed{D}$ on a Riemannian spin manifold $(M,g)$ is a first order differential operator acting on a spinor bundle over $M$.
It is an elliptic and formally self-adjoint operator on the space of spinors with compact support with respect to an $L^2$ scalar product.
If $M$ is complete, $\slashed{D}$ is essentially self-adjoint on the completion of the said space of spinors with compact support.
If $M$ is compact, the spectrum of $\slashed{D}$ consists of discrete real eigenvalues with finite multiplicity \cite{bfgk,friedrich}.

Harmonic spinors are elements of the kernel of $\slashed{D}$.
They play a key role in particle physics \cite{witten} and are one of the many classes of spinors of interest in geometry (e.g.\ parallel, Killing spinors, etc.), see \cite{bfgk} and references therein.
In general, the dimension of $\ker\slashed{D}$ depends on the metric used to define the Dirac operator, and cannot be expressed in terms of topological invariants of the manifold \cite{hitchin}.
The Lichnerowicz--Weitzenb\"ock formula 
\[\slashed{D}^2 = \frac14 \mathrm{Scal}(g)+\Delta^S\]
relates the square of the Dirac operator with the scalar curvature $\mathrm{Scal}(g)$ of $(M,g)$ and the Bochner--Laplace operator $\Delta^S \coloneqq (\nabla^S)^*\circ \nabla^S$ (here $\nabla^S$ is the spin connection induced by the Levi-Civita connection on $(M,g)$, and $(\nabla^S)^*$ is its adjoint with respect to the above $L^2$ product).
A well-known implication of this formula when $M$ is compact is that if $\mathrm{Scal}(g) > 0$ then there are no non-trivial harmonic spinors.
The Lichnerowicz--Weitzenb\"ock formula holds for pseudo-Riemannian manifolds as well, with a slight variation of the definitions of $\slashed{D}$ and $\nabla^S$ in the Riemannian case (see Section \ref{sec:preliminaries-dirac} below).

Computing harmonic spinors amounts to solving a partial differential equation, which is challenging in general.
When a group symmetry is given, it is natural to restrict to the class of spinors that are invariant under the group action.
A natural question is thus to compute left-invariant harmonic spinors on Lie groups.

In this paper, we study the Dirac operator and left-invariant harmonic spinors on certain classes of pseudo-Riemannian real Lie groups, specifically in dimension three.
All Lie groups are parallelizable, and are thus spin with a unique, natural spin structure.
The Dirac operator acting on left-invariant spinors can be written in terms of a left-invariant coframe (see \cite[Prop.\ 5.2]{bazzoni-merchan-munoz} for the Riemannian case, cf.\ Proposition \ref{prop:formula_Dirac_operator} below for a revision of the previous result and a generalisation to any signature).
The existence of left-invariant harmonic spinors then naturally imposes restrictions on the structure constants of the Lie algebra of the Lie group.
Therefore, one can detect those left-invariant metrics (up to automorphism) carrying left-invariant harmonic spinors.
We study this process in dimension two and three, by grouping Lie algebras into various classes and using classification results of pseudo-Riemannian metrics \cite{ha-lee1,ha-lee3}.
To the best of our knowledge, this systematic approach has never appeared in the literature.
We now briefly summarise those classes of Lie algebras we are going to use.

Connected, simply connected three-dimensional Lie groups are in one-to-one correspondence with their Lie algebras, whose classification goes back to Bianchi \cite{bianchi, bianchi-essay}, cf.\ Milnor \cite{milnor} and our Tables \ref{table:three-dim-lie-algebras-uni}--\ref{table:three-dim-lie-algebras-non-uni}.
Besides the Abelian Lie algebra $\mathbb R^3$ and the simple ones $\mathfrak{su}(2)$ and $\mathfrak{sl}(2,\mathbb R)$, the remaining three-dimensional Lie algebras are almost Abelian, i.e.\ they are non-Abelian and admit a codimension one Abelian ideal.
Further, a Lie algebra is unimodular when the adjoint representation of any element is traceless.
One can then group three-dimensional Lie algebras into unimodular and non-unimodular ones.
Among the almost Abelian metric Lie algebras, one can further distinguish the class of isotropic ones, i.e.\ those on which the orthogonal complement of a codimension-one Abelian ideal is degenerate.
We will specialise our general formula for the Dirac operator to almost Abelian Lie groups in any dimension, distinguishing between the isotropic and the non-isotropic case.
This will be useful in particular to study harmonic spinors on the unique non-Abelian two-dimensional case.
In dimension three, we will see that a Riemannian Lie algebra admits left-invariant harmonic spinors only if it is unimodular.
As we will observe in Section \ref{subsec:four-dim-non-uni-ex}, this phenomenon does not generalise to dimension four.
In the Lorentzian case, we will discuss unimodular and non-unimodular cases separately.

A detailed study of the Dirac operator and its spectrum in the Riemannian case on the three-sphere $S^3 = \mathrm{SU}(2)$ was done by Hitchin \cite[Section 3.1]{hitchin}.
Here the standard bi-invariant metric has positive scalar curvature, so by the Lichnerowicz--Weitzenb\"ock formula this metric does not carry non-trivial harmonic spinors.
However, Hitchin showed that there are left-invariant Riemannian metrics on $S^3$ carrying harmonic spinors, but a consequence of his formulas is that these spinors cannot be left-invariant.
We provide a more general explanation for this, cf.\ Lemma \ref{lemma:cmatrix} below.
We will also see that there are left-invariant Lorentzian metrics on $S^3$ carrying left-invariant harmonic spinors.

The Dirac operator and its spectrum for left-invariant Riemannian metrics on the three-dimensional Heisenberg group (corresponding to the nilpotent Lie algebra $\mathfrak{heis}_3$) were studied by Ammann and B\"ar, see in particular \cite[Section 3, p.\ 230]{ammann-bar}.
Again, it is clear from their formulas that no left-invariant harmonic spinors exist. 
A conceptual explanation for this is provided again by our Lemma \ref{lemma:cmatrix}, where we show in particular that left-invariant harmonic spinors in the Riemannian non-Abelian case exist only if the Lie algebra is non-nilpotent.
Our present investigation was partly motivated by Ammann--B\"ar's and Hitchin's results.

Another natural problem could be to study the spectrum of the Dirac operator acting on left-invariant spinors on three-dimensional Lie groups.
In the Riemannian case and the unimodular Lorentzian case, we will see that left-invariant harmonic spinors exist if and only if the Dirac operator vanishes identically on left-invariant harmonic spinors.
This suggests the idea that finding harmonic spinors and non-harmonic eigenspinors are two inherently different problems in dimension three.
It is also worth mentioning that the techniques developed in \cite{harmful} could be used to study the problem of finding left-invariant harmonic spinors on, at least, unimodular Lie groups. These techniques offer an alternative approach to tackle this problem in higher dimensions. We leave these questions for future work.

The paper is structured as follows.
In Section \ref{sec:preliminaries} we review some basics of spin geometry and Lie algebras.
Further, we revise \cite[Prop.\ 5.2]{bazzoni-merchan-munoz} and generalise it to the pseudo-Riemannian case.
In Section \ref{sec:harmonic-spinors-almost-abelian-groups} we specialise the resulting general formula to the almost Abelian case.
We then see applications to the only non-Abelian two-dimensional Lie algebra.
In Sections \ref{sec:harmonic-spinors-riem-three-dim} and \ref{sec:harmonic-spinors-three-dimensional-lorentzian-groups} we detect all metrics carrying left-invariant harmonic spinors in dimension three.
These results are summarised in Tables \ref{table:Riemannian}--\ref{table:Lorentzian_nonuni}.
There the triple $\{x,y,z\}$ is a basis of the Lie algebra in the first column.
The corresponding Lie brackets are specified in the second column, whereas in the third column we have $3\times 3$ matrices representing metrics (up to equivalence) carrying left-invariant harmonic spinors with respect to the basis $\{x,y,z\}$.
In the fourth column we see relevant parametric conditions for each case, if any.
General classification results of Lorentzian metrics on unimodular three-dimensional Lie algebras are recalled in the Appendix and organised into Tables \ref{table:simple-algebras-lorentzian-metrics}--\ref{table:non-simple-algebras-lorentzian-metrics} at the end.

\textbf{Acknowledgments.} We thank Diego Conti, Anna Fino, and Lucía Martín-Merchán for useful discussions and comments. We also thank Diego Conti for suggesting a useful formula for the Ricci tensor of a left-invariant metric.
The first named author is supported by the International School for Advanced Studies (SISSA). 
The second named author is partially supported by INdAM--GNSAGA, and by the PRIN 2022 Project (2022K53E57) - PE1 - \lq\lq Optimal transport: new challenges across analysis and geometry\rq\rq.

\begin{table}
\centering
\caption{Three-dimensional Riemannian non-Abelian Lie algebras admitting left-invariant harmonic spinors}
\footnotesize
\begin{tabular}{p{25mm}p{25mm}p{45mm}M{17mm}} \toprule
\textbf{Lie algebra} & \textbf{Lie brackets} & \textbf{Metrics (up to equivalence)} & \textbf{Conditions} \\ \midrule
$\mathfrak{sl}(2,\mathbb{R})$ &
$\begin{aligned}
[x,y] &= 2z\\
[z,x] &= 2y\\
[z,y] &= 2x
\end{aligned}$
&
$\begin{pmatrix}
\lambda & 0 & 0\\
0 & \mu & 0\\
0 & 0 & \nu
\end{pmatrix}$
&
$\begin{gathered}
\lambda = \mu + \nu\\
\mu \geq \nu > 0
\end{gathered}$
\\
\cmidrule{1-4}
$\mathfrak{e}(1,1)$ &
$\begin{aligned}
[x,y] &= 0\\
[z,x] &= x\\
[z,y] &= -y
\end{aligned}$
&
$\begin{pmatrix}
1 & 0 & 0\\
0 & 1 & 0\\
0 & 0 & \nu
\end{pmatrix}$
&
$\nu > 0$
\\
\bottomrule
\end{tabular}
\label{table:Riemannian}
\end{table}

\begin{table}
\centering
\caption{Three-dimensional unimodular Lorentzian non-Abelian Lie algebras admitting left-invariant harmonic spinors}
\footnotesize
\begin{tabular}{p{25mm}p{25mm}p{45mm}M{17mm}} \toprule
\textbf{Lie algebra} & \textbf{Lie brackets} & \textbf{Metrics (up to equivalence)} & \textbf{Conditions} \\ \midrule
$\mathfrak{su}(2)$ &
$\begin{aligned}
[x,y] &= 2z \\
[z,x] &= 2y \\
[z,y] &= -2x
\end{aligned}$
&
$\begin{pmatrix}
\mu & 0 & 0\\
0 & \nu & 0\\
0 & 0 & -(\mu+\nu)
\end{pmatrix}$
&
$\mu\geq\nu>0$
\\
\cmidrule{1-4}
$\mathfrak{sl}(2,\mathbb R)$ &
$\begin{aligned}
[x,y] &= 2z\\
[z,x] &= 2y\\
[z,y] &= 2x
\end{aligned}$
&
$\begin{pmatrix}
\nu-\mu & 0 & 0\\
0 & -\mu & 0\\
0 & 0 & \nu
\end{pmatrix}$
&
$\nu>\mu>0$
\\
\cmidrule{3-4}
& &
$
\frac{4}{a^2\alpha N}\begin{pmatrix}
-N & 0 & \beta\\
0 & \frac{a^2\alpha}{N} & 0\\
\beta & 0 & \frac{\beta^2-\alpha^2}{N}
\end{pmatrix}
$ \newline
with $N=\sqrt{\alpha^2+\beta^2}$
&
$\begin{gathered}
2\beta^2=-a^2\alpha\\
-a^2<\alpha<0\\
\beta>0
\end{gathered}$
\\
\cmidrule{3-4}
& 
&
$\frac{1}{2ab}\begin{pmatrix}
a-8 & -a & 0\\
-a & a+8 & 0\\
0 & 0 & 8a/b
\end{pmatrix}$
&
$\begin{gathered}
a=-2b\\
b\neq0
\end{gathered}$
\\
\cmidrule{1-4}
$\mathfrak{e}(2)$ &
$\begin{aligned}
[x,y] &= z\\
[z,x] &= y\\
[z,y] &= 0
\end{aligned}$
&
$\begin{pmatrix}
0 & -1 & 0\\
-1 & u & 0\\
0 & 0 & v
\end{pmatrix}$
&
$u=-v<0$
\\
\cmidrule{1-4}
$\mathfrak{e}(1,1)$ &
$\begin{aligned}
[x,y] &= y\\
[z,x] &= z\\
[z,y] &= 0
\end{aligned}$
&
$\begin{pmatrix}
4/(u^2-v^2) & 0 & 0\\
0 & 1 & u/v\\
0 & u/v & 1
\end{pmatrix}$
&
$\begin{gathered}
u=0\\
v>0
\end{gathered}$
\\
\cmidrule{3-4}
& &
$\begin{pmatrix}
1/u & 0 & 0\\
0 & -1 & 0\\
0 & 0 & 1
\end{pmatrix}$
&
$u>0$
\\
\cmidrule{3-4}
& &
$\begin{pmatrix}
0 & 0 & 1\\
0 & 1 & 0\\
1 & 0 & 0
\end{pmatrix}$
&
--
\\
\cmidrule{1-4}
$\mathfrak{heis}_3$ &
$\begin{aligned}
[x,y] &= z\\
[z,x] &= 0\\
[z,y] &= 0
\end{aligned}$
&
$\begin{pmatrix}
1 & 0 & 0\\
0 & 0 & 1\\
0 & 1 & 0
\end{pmatrix}$
&
--
\\
\bottomrule
\end{tabular}
\label{table:Lorentzian_uni}
\end{table}

\begin{table}
\centering
\caption{Three-dimensional non-unimodular Lorentzian Lie algebras admitting left-invariant harmonic spinors}
\footnotesize
\begin{tabular}{p{17mm}p{27mm}p{41mm}M{27mm}} \toprule
\textbf{Lie algebra} & \textbf{Lie brackets} & \textbf{Metrics (up to equivalence)} & \textbf{Conditions} \\ \midrule
$\mathbb R^2\rtimes_{\mathrm{Id}}\mathbb R$ &
$\begin{aligned}
[x,y] &= 0\\
[z,x] &= x\\
[z,y] &= y
\end{aligned}$
&
$\begin{pmatrix}
1 & 0 & 0\\
0 & 0 & 1\\
0 & 1 & 0
\end{pmatrix}$
&
--
\\ \cmidrule{1-4}
$\mathfrak g(1)$ &
$\begin{aligned}
[x,y] &= 0\\
[z,x] &= x\\
[z,y] &= x+y
\end{aligned}$
&
$\begin{pmatrix}
\epsilon & 0 & 0\\
0 & -\epsilon/16 & 0\\
0 & 0 & \mu
\end{pmatrix}
$
&
$\begin{gathered}
\epsilon = \pm 1\\
\mu>0
\end{gathered}$
\\
\cmidrule{3-4}
&
\phantom{$\begin{aligned}
[x_1,x_2] &= 0\\
[x_3,x_1] &= x_1\\
[x_3,x_2] &= x_1 + x_2
\end{aligned}$}
&
$\begin{pmatrix}
0 & 0 & 1\\
0 & \mu & 0\\
1 & 0 & 0
\end{pmatrix}$
&
$\mu>0$
\\
\cmidrule{1-4}
$\mathfrak g(0)$ &
$\begin{aligned}
[x,y] &= 0\\
[z,x] &= 2x\\
[z,y] &= 0
\end{aligned}$
&
$\begin{pmatrix}
\epsilon & 1 & 0\\
1 & 3\epsilon/4 & 0\\
0 & 0 & \nu
\end{pmatrix}$
&
$\begin{gathered}
\epsilon = \pm 1\\
\nu>0
\end{gathered}$
\\
\cmidrule{3-4}
&
\phantom{$\begin{aligned}
[x_1,x_2] &= 0\\
[x_3,x_1] &= 2x_1\\
[x_3,x_2] &= 0
\end{aligned}$}
&
$\begin{pmatrix}
0 & 0 & 1\\
0 & 1 & 0\\
1 & 0 & 0
\end{pmatrix}, \quad 
\begin{pmatrix}
1 & 0 & 0\\
0 & 0 & 1\\
0 & 1 & 0
\end{pmatrix}$
&
--
\\
\cmidrule{1-4}
$\mathfrak g(-3)$ &
$\begin{aligned}
[x,y] &= 0\\
[z,x] &= 3x\\
[z,y] &= -y
\end{aligned}$
&
$\begin{pmatrix}
0 & 1 & 0\\
1 & \epsilon & 0\\
0 & 0 & \mu
\end{pmatrix}$
&
$\begin{gathered}
\epsilon \in \{0,\pm1\}\\
\mu>0
\end{gathered}$
\\
\cmidrule{3-4}
&
\phantom{$\begin{aligned}
[x_1,x_2] &= 0\\
[x_3,x_1] &= 3x_1\\
[x_3,x_2] &= -x_2
\end{aligned}$}
&
$\begin{pmatrix}
1 & 0 & 0\\
0 & 0 & 1\\
0 & 1 & 0
\end{pmatrix}$
&
--
\\
\cmidrule{1-4}
$\begin{gathered}
\mathfrak g(c)\\
c>1
\end{gathered}$ &
$\begin{aligned}
[x,y] &= 0\\
[z,x] &= y\\
[z,y] &= -cx+2y
\end{aligned}$
&
$\begin{pmatrix}
1 & 1 & 0\\
1 & \tau & 0\\
0 & 0 & \mu
\end{pmatrix}$
&
$\begin{gathered}
\tau=-c-6 \pm4\sqrt{c+3}\\
\mu>0
\end{gathered}$
\\
\cmidrule{1-4}
$\begin{gathered}
\mathfrak g(c)\\
c<1\\
c\notin\{-3,0\}
\end{gathered}$ &
$\begin{aligned}
[x,y] &= 0\\
[z,x] &= (1+\sqrt{1-c})x\\
[z,y] &= (1-\sqrt{1-c})y
\end{aligned}$ 
&
$\begin{pmatrix}
\epsilon & 1 & 0\\
1 & \epsilon(c+3)/4 & 0\\
0 & 0 & \mu
\end{pmatrix}$
&
$\begin{gathered}
\epsilon=\pm 1\\
\mu>0
\end{gathered}$
\\
\cmidrule{3-4}
&
\phantom{$\begin{aligned}
[x_1,x_2] &= 0\\
[x_3,x_1] &= (1+\sqrt{1-c})x_1\\
[x_3,x_2] &= (1-\sqrt{1-c})x_2
\end{aligned}$}
&
$\begin{pmatrix}
1 & 0 & 0\\
0 & 0 & 1\\
0 & 1 & 0
\end{pmatrix}$
&
--
\\
\bottomrule
\end{tabular}
\label{table:Lorentzian_nonuni}
\end{table}

\section{Preliminaries}
\label{sec:preliminaries}

\subsection{The Dirac operator on left-invariant spinors}
\label{sec:preliminaries-dirac}

Let $G$ be an $n$-dimensional connected, simply connected real Lie group equipped with a left-invariant pseudo-Riemannian metric $g$ of signature $(p,q)$. 
Every Lie group is parallelizable as it admits a global orthonormal left-invariant frame. 
Hence the orthonormal frame bundle over $G$ is just $G\times\mathrm{SO}(p,q)$ and its unique spin structure is $G\times\mathrm{Spin}(p,q)$. 

Let $\gamma\colon\mathrm{Cl}(p,q)\to \mathrm{End}_{\mathbb C}(\Sigma)$ be an irreducible complex representation of the Clifford algebra of $\mathbb R^n$ (where $\mathbb R^n$ models each tangent space of $G$ with scalar product $\langle{}\cdot{},{}\cdot{}\rangle $ of signature $(p,q)$). 
Here we define the Clifford algebra using the convention \[xy+yx=-2\langle x,y\rangle 1, \qquad x,y \in \mathbb R^n.\]
Then the spinor bundle of $G$ is the trivial complex vector bundle $S\coloneqq G\times\Sigma \to G$, and a spinor $\psi\in\Gamma(S)$ is identified with a map $\psi\colon G\to \Sigma$. 
We say that $\psi$ is \emph{left-invariant} if it is a constant map. 
We write Clifford multiplication of a vector field $x$ and a spinor $\psi \in \Sigma$ by $x\psi$ rather than $\gamma(x)\psi$.

Since all objects introduced are left-invariant, we will work at the Lie algebra level. Let $\mathfrak g$ be the Lie algebra of $G$ and let $g\colon\mathfrak g\times\mathfrak g\to\mathbb R$ be a pseudo-Riemannian metric of signature $(p,q)$ on $\mathfrak g$. Let $\{e_1,\ldots,e_n\}$ be an orthonormal basis of $(\mathfrak g,g)$ and denote by $\{e^1,\ldots,e^n\}\subset\mathfrak g^*$ its dual basis. The spin Dirac operator is a first order differential operator defined locally as \[\slashed{D}\colon \Gamma(S)\to \Gamma(S), \qquad \slashed{D}\psi = \sum_{k=1}^n \varepsilon_k e_k\nabla_{e_k}^S\psi,\] where $\nabla^S$ is the spin connection induced by the Levi-Civita connection $\nabla$ of $g$, the product with $e_k$ is given by Clifford multiplication, and $\varepsilon_k \coloneqq g(e_k,e_k) \in \{\pm 1\}$. Explicitly \[\nabla_{e_k}^S\psi = \partial_{e_k}\psi+\frac{1}{2}\sum_{1\leq i<j\leq n} \varepsilon_i\varepsilon_j g(\nabla_{e_k}e_i,e_j)e_ie_j\psi.\]

The signs $\varepsilon_k$ make the above formulas invariant under the pseudo-orthogonal group. We recall that elements in the kernel of $\slashed{D}$ are called \emph{harmonic} spinors.

We are interested in the action of $\slashed{D}$ on left-invariant spinors. Since these are constant maps on $G$ with values in $\Sigma$, they can be identified with elements of the complex representation space $\Sigma$. If $\psi$ is a left-invariant spinor, clearly $\partial_{e_k}\psi=0$ for all $k$.
An explicit formula for the Dirac operator acting on left-invariant spinors in terms of the differential forms $\dd e^i$ was obtained in \cite[Prop.\ 5.2]{bazzoni-merchan-munoz}. There the authors consider the case where $g$ is a Riemannian metric. However, we noticed that there is a missing factor $2$ in their formula. Hereafter we revise the proof of \cite[Prop.\ 5.2]{bazzoni-merchan-munoz}, and include the more general case where $g$ is an indefinite metric. 
If $\alpha $ is any form and $x$ a vector, we write $\iota_x\alpha $ for the interior product $\alpha(x,{}\cdot{})$.

\begin{proposition}
\label{prop:formula_Dirac_operator}
Let $(\mathfrak g,g)$ be a pseudo-Riemannian Lie algebra, $\{e_1,\ldots,e_n\}$ be an orthonormal basis of $(\mathfrak g,g)$, and $\psi\in\Sigma$ be a left-invariant spinor. 
Then 
\[\slashed{D}\psi=-\frac{1}{4}\sum_{i=1}^n(\varepsilon_ie^i\wedge\dd e^i+2\iota_{e_i}\dd e^i)\psi,\]
where $\varepsilon_i= g(e_i,e_i)\in\{\pm1\}$.
\end{proposition}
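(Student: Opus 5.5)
The plan is to compute $\slashed{D}\psi$ directly from the local formula in the preliminaries. Then I rewrite the Levi-Civita coefficients through the Koszul formula in terms of the structure constants, and finally in terms of the forms $\dd e^i$. Since $\psi$ is constant, $\partial_{e_k}\psi=0$, and therefore
\[
\slashed{D}\psi=\frac12\sum_{k=1}^n\sum_{i<j}\varepsilon_k\varepsilon_i\varepsilon_j\,\Gamma_{kij}\,e_ke_ie_j\psi,\qquad \Gamma_{kij}\coloneqq g(\nabla_{e_k}e_i,e_j).
\]
Metric compatibility gives $\Gamma_{kij}=-\Gamma_{kji}$, so the sum over $i<j$ can be replaced by $\tfrac12\sum_{i\neq j}$. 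For left-invariant fields the Koszul formula reads
\[
2\Gamma_{kij}=g([e_k,e_i],e_j)-g([e_i,e_j],e_k)+g([e_j,e_k],e_i).
\]
With $c_{ab}^{\,l}\coloneqq e^l([e_a,e_b])$ we have $g([e_a,e_b],e_c)=\varepsilon_c c^{\,c}_{ab}$. We also have $\dd e^l(e_a,e_b)=-c^{\,l}_{ab}$, so every coefficient becomes a component of some $\dd e^l$.

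Next I split the triple sum over $(k,i,j)$ with $i\neq j$ into two parts. The first part has $k,i,j$ pairwise distinct. There $e_ke_ie_j$ is totally antisymmetric, so only the totally antisymmetric part of $\Gamma_{kij}$ survives. By Koszul that part is $\tfrac12$ times the cyclic sum of structure constants, i.e.\ a combination of $c^{\,j}_{ki}$ over all orderings. This should reassemble into the Clifford action of the $3$-form $\sum_l\varepsilon_l e^l\wedge\dd e^l$. The second part has $k=i$ or $k=j$. There $e_ke_ke_j=-\varepsilon_ke_j$, so these terms collapse to a vector, namely a sum of terms $\Gamma_{kkj}e_j$. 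By Koszul, $\Gamma_{kkj}=g([e_j,e_k],e_k)=\varepsilon_kc^{\,k}_{jk}$, which are the trace-type terms. These should be identified with the $1$-form $\sum_i\iota_{e_i}\dd e^i$. Indeed $(\iota_{e_i}\dd e^i)(e_j)=-c^{\,i}_{ij}$, and summing over $i$ gives $-\operatorname{tr}\operatorname{ad}_{e_j}$ up to the signs $\varepsilon$.

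To compare the two sides, I fix the convention that a form acts on spinors through the metric identification $e^a\mapsto\varepsilon_ae_a$. Under this convention a decomposable form $e^{a_1}\wedge\cdots\wedge e^{a_r}$ with distinct indices acts as $\varepsilon_{a_1}\cdots\varepsilon_{a_r}e_{a_1}\cdots e_{a_r}$. I then match coefficients of each $e_ae_be_c$ with $a<b<c$, and of each $e_j$, on both sides.

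The main obstacle is purely combinatorial, namely tracking signs and multiplicities. In the $3$-form part, each unordered triple $\{a,b,c\}$ receives contributions from $6$ orderings. These must be compared with the three terms $e^l\wedge\dd e^l$ for $l\in\{a,b,c\}$, each of which contributes one structure constant. In the $1$-form part, the cases $k=i$ and $k=j$ each contribute, and this doubling is exactly where the factor $2$ in front of $\iota_{e_i}\dd e^i$ comes from, presumably the factor missing in \cite{bazzoni-merchan-munoz}. I would first verify the final constants $-\tfrac14$ and $2$ on a two-dimensional example, $[e_1,e_2]=e_1$ with a Riemannian or Lorentzian metric. There only the $1$-form part appears, so this checks both the factor $2$ and the $\varepsilon$-signs before the general bookkeeping is done.
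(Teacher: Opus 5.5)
Your plan is correct, but it organises the computation differently from the paper.

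\textbf{How the paper proceeds.} The paper never splits the index set by coincidences. It rewrites $4\nabla^S_{e_i}\psi=\varepsilon_i\dd e^i\psi-\sum_{j,k}\varepsilon_j\dd e^k(e_i,e_j)e_je_k\psi+\sum_k\dd e^k(e_k,e_i)\psi$. It then multiplies by $\varepsilon_ie_i$ and sums to get $4\slashed{D}\psi=\sum_i(e_i\dd e^i-2\dd e^ie_i+\iota_{e_i}\dd e^i)\psi$. Finally, the Clifford identities $e_i\omega=\varepsilon_ie^i\wedge\omega-\iota_{e_i}\omega$ and $\omega e_i=\varepsilon_i\omega\wedge e^i+\iota_{e_i}\omega$ for two-forms produce the $3$-form and $1$-form pieces in one step.

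\textbf{How your route differs.} You decompose the cubic element $\sum\varepsilon_k\varepsilon_i\varepsilon_j\Gamma_{kij}e_ke_ie_j$ into its totally skew part and its trace part, and compare coefficients. This costs you bookkeeping over six orderings, which the paper sidesteps. In return it makes the factor $2$ visible: the patterns $k=i$ and $k=j$ contribute equally, using $e_je_ie_j=\varepsilon_je_i$ and $\Gamma_{jij}=-\Gamma_{jji}$. It also identifies the vector part directly with the one-form $x\mapsto\mathrm{tr}(\mathrm{ad}(x))$. That is exactly why this term detects unimodularity later in the paper.

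\textbf{Two slips to fix.} Since you left the constants unchecked, here they are.

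First, set $T_{abc}\coloneqq g([e_a,e_b],e_c)$. The totally skew part of $\Gamma_{abc}$ is $\tfrac16(T_{abc}+T_{bca}+T_{cab})$. This is half of the antisymmetrisation of $T$, not half of the cyclic sum as you wrote. Hence the signed sum over the six orderings of $\{a,b,c\}$ is exactly the cyclic sum. So the coefficient of $e_ae_be_c$, $a<b<c$, in $\slashed{D}\psi$ is $\tfrac14\varepsilon_a\varepsilon_b\varepsilon_c(T_{abc}+T_{bca}+T_{cab})$. This matches the other side: the coefficient of $e^a\wedge e^b\wedge e^c$ in $\sum_l\varepsilon_le^l\wedge\dd e^l$ is $-(T_{abc}+T_{bca}+T_{cab})$, and your action convention supplies the factor $\varepsilon_a\varepsilon_b\varepsilon_c$.

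Second, $\sum_i(\iota_{e_i}\dd e^i)(e_j)=-\sum_ic^{\,i}_{ij}=+\mathrm{tr}(\mathrm{ad}(e_j))$, with a plus sign and no $\varepsilon$'s. The vector part of $\slashed{D}\psi$ is then $-\tfrac12\sum_j\varepsilon_j\,\mathrm{tr}(\mathrm{ad}(e_j))\,e_j\psi=-\tfrac12\sum_i\iota_{e_i}\dd e^i\psi$, as required.

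\textbf{A further check.} Your two-dimensional test only probes the $1$-form part. For the $3$-form constant, take the Riemannian brackets $[e_1,e_2]=e_3$ and cyclic: both sides give $\tfrac34e_1e_2e_3\psi$. Using half the cyclic sum would instead give $\tfrac94$. With these corrections your argument closes.
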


\begin{proof}
The Koszul formula on $(\mathfrak g,g)$ reads 
\[2g(\nabla_xy,z)=g([x,y],z)-g([y,z],x)+g([z,x],y)\] 
for all $x,y,z\in\mathfrak g$, where $\nabla$ denotes the Levi-Civita connection of $g$. 
In particular, we have 
\begin{align*}
2g(\nabla_{e_i}e_j,e_k)&=g([e_i,e_j],e_k)-g([e_j,e_k],e_i)+g([e_k,e_i],e_j)\\
&=\varepsilon_ke^k([e_i,e_j])-\varepsilon_ie^i([e_j,e_k])+\varepsilon_je^j([e_k,e_i])\\
&=-\varepsilon_k\dd e^k(e_i,e_j)+\varepsilon_i\dd e^i(e_j,e_k)-\varepsilon_j\dd e^j(e_k,e_i).
\end{align*}
The formula for the spin connection $\nabla^S$ acting on a left-invariant spinor $\psi\in\Sigma$ is given by 
\[2\nabla^S_{e_i}\psi=\sum_{j<k}\varepsilon_j\varepsilon_kg(\nabla_{e_i}e_j,e_k)e_je_k\psi.\]
Hence, we obtain 
\begin{align*}
4\nabla^S_{e_i}\psi&=\sum_{j<k}\varepsilon_j\varepsilon_k(\varepsilon_i\dd e^i(e_j,e_k)-\varepsilon_j\dd e^j(e_k,e_i)-\varepsilon_k\dd e^k(e_i,e_j))e_je_k\psi\\
&=\varepsilon_i\dd e^i\psi-\sum_{j<k}\varepsilon_k\dd e^j(e_k,e_i)e_je_k\psi-\sum_{j<k}\varepsilon_j\dd e^k(e_i,e_j)e_je_k\psi,
\end{align*} 
where we have used that a two-form $\omega$ acts on $\psi$ as $\omega\psi = \sum_{j<k} \varepsilon_j\varepsilon_k \omega(e_j,e_k)e_je_k\psi$. 
Note the following:
\begin{align*}
\sum_{j,k}\varepsilon_j\dd e^k(e_i,e_j)e_je_k&=\sum_{j<k}\varepsilon_j\dd e^k(e_i,e_j)e_je_k+\sum_{j>k}\varepsilon_j\dd e^k(e_i,e_j)e_je_k \\
& \qquad +\sum_{k}\varepsilon_k\dd e^k(e_i,e_k)e_k^2.
\end{align*}
Swapping the indices $j$ and $k$ in the second summand we obtain 
\[\sum_{j>k}\varepsilon_j\dd e^k(e_i,e_j)e_je_k=\sum_{k>j}\varepsilon_k\dd e^j(e_i,e_k)e_ke_j=\sum_{j<k}\varepsilon_k\dd e^j(e_k,e_i)e_je_k.\]
Using $e_k^2=-\varepsilon_k$, the last summand becomes 
\[\sum_k\varepsilon_k\dd e^k(e_i,e_k)e_k^2=\sum_k\dd e^k(e_k,e_i).\]
Therefore 
\[4\nabla^S_{e_i}\psi=\varepsilon_i\dd e^i\psi-\sum_{j,k}\varepsilon_j\dd e^k(e_i,e_j)e_je_k\psi+\sum_k\dd e^k(e_k,e_i)\psi.\]
We now compute the Dirac operator acting on $\psi\in\Sigma$: 
\begin{align*}
4\slashed{D}\psi&=4\sum_i\varepsilon_ie_i\nabla^S_{e_i}\psi\\
&=\sum_ie_i\dd e^i\psi-\sum_{i,j,k}\varepsilon_i\varepsilon_j\dd e^k(e_i,e_j)e_ie_je_k\psi+\sum_{i,k}\varepsilon_i\dd e^k(e_k,e_i)e_i\psi.
\end{align*}
For the second summand we have 
\[\sum_{i,j,k}\varepsilon_i\varepsilon_j\dd e^k(e_i,e_j)e_ie_je_k=\sum_{i<j,k}\varepsilon_i\varepsilon_j\dd e^k(e_i,e_j)e_ie_je_k+\sum_{i>j,k}\varepsilon_i\varepsilon_j\dd e^k(e_i,e_j)e_ie_je_k,\] 
and relabeling $i$ and $j$ in the second term we obtain 
\[\sum_{i,j,k}\varepsilon_i\varepsilon_j\dd e^k(e_i,e_j)e_ie_je_k\psi=2\sum_{i<j,k}\varepsilon_i\varepsilon_j\dd e^k(e_i,e_j)e_ie_je_k\psi=2\sum_k\dd e^ke_k\psi.\]
The third summand in the expression of $4\slashed{D}\psi$ is just 
\[\sum_{i,k}\varepsilon_i\dd e^k(e_k,e_i)e_i\psi=\sum_k\iota_{e_k}\dd e^k\psi,\] 
where we have used that a one-form $\alpha$ acts on $\psi$ as $\alpha\psi = \sum_i \varepsilon_i \alpha(e_i)e_i\psi$. 
Changing the index $k$ by $i$ we obtain 
\[4\slashed{D}\psi=\sum_i(e_i\dd e^i-2\dd e^ie_i+\iota_{e_i}\dd e^i)\psi.\]
Finally, using $e_i\dd e^i\psi=(\varepsilon_ie^i\wedge\dd e^i-\iota_{e_i}\dd e^i)\psi$ and $\dd e^ie_i\psi=(\varepsilon_i\dd e^i\wedge e^i+\iota_{e_i}\dd e^i)\psi$ we obtain the claim.
\end{proof}
We will be interested in left-invariant harmonic spinors on $G$, and in those left-invariant metrics carrying them.

\begin{remark}
By Proposition \ref{prop:formula_Dirac_operator}, all left-invariant spinors on Abelian Lie algebras are harmonic, regardless of the choice of a metric. 
We will then assume our Lie algebras to be non-Abelian.
\end{remark}

\begin{remark}
Proposition \ref{prop:formula_Dirac_operator} implies that the existence of left-invariant harmonic spinors on $G$ depends on the structure equations of the Lie algebra $\mathfrak g$. 
Because of this, we say that the Lie algebra $\mathfrak g$ admits left-invariant harmonic spinors, although these are sections of the spinor bundle over $G$. 
\end{remark}

\subsection{Three-dimensional Lie algebras}

We will focus on three-dimensional connected, simply connected real Lie groups. These are in one-to-one correspondence with their Lie algebras. The classification of three-dimensional real Lie algebras goes back to Bianchi \cite{bianchi,bianchi-essay}, see also Milnor \cite{milnor}.

Let us recall some facts about Lie algebras. 
A Lie algebra $\mathfrak g$ is called \emph{unimodular} if $\mathrm{tr}(\mathrm{ad}(x))=0$ for all $x\in\mathfrak g$, where $\mathrm{ad}\colon\mathfrak g\to\mathrm{Der}(\mathfrak g)$, $\mathrm{ad}(x) \coloneqq [x,{}\cdot{}]$, is the adjoint representation of $\mathfrak g$. 
The \emph{Killing form} of a Lie algebra $\mathfrak g$ is the symmetric bilinear form $B_\mathfrak g\colon\mathfrak g\times\mathfrak g\to\mathbb R$ defined by \[B_\mathfrak g(x,y)\coloneqq\mathrm{tr}(\mathrm{ad}(x)\circ\mathrm{ad}(y)), \qquad x,y \in \mathfrak g.\] The Killing form $B_\mathfrak g$ is invariant under automorphisms of the Lie algebra $\mathfrak g$, so it does not depend on the choice of a basis for $\mathfrak g$. We denote by $\sigma(\mathfrak g)\coloneqq(p,q,r)$ the \emph{signature} of the Killing form $B_\mathfrak g$ of $\mathfrak g$, where $p$ is the number of positive directions, $q$ the number of negative ones, $r$ the number of null ones, and $p+q+r=\dim_{\mathbb R}(\mathfrak g)$.
The list of non-Abelian three-dimensional Lie algebras is presented in Tables~\ref{table:three-dim-lie-algebras-uni}--\ref{table:three-dim-lie-algebras-non-uni}.

\begin{table}
\centering
\caption{Three-dimensional unimodular non-Abelian Lie algebras}
\footnotesize
\begin{tabular}{p{30mm}p{52mm}M{30mm}} \toprule
\textbf{Lie algebra} $\mathfrak g$ & \textbf{Description} &  \textbf{Signature} $\sigma(\mathfrak g)$ \\ \midrule
$\mathfrak{su}(2)\cong \mathfrak{so}(3)$ & Compact real form of $\mathfrak{sl}(2,\mathbb C)$, simple & $(0,3,0)$ \\ \midrule
$\mathfrak{sl}(2,\mathbb R)\cong \mathfrak{so}(2,1)$ & Split real form of $\mathfrak{sl}(2,\mathbb C)$, simple & $(2,1,0)$ \\ \midrule
$\mathfrak{e}(2) = \mathfrak{r}_{3,0}'$ & Lie algebra of the group of rigid motions of $\mathbb R^2$, solvable & $(0,1,2)$ \\ \midrule
$\mathfrak{e}(1,1) = \mathfrak{r}_{3,-1}=\mathfrak{sol}_3$ & Lie algebra of the group of rigid motions of $\mathbb R^{1,1}$, completely solvable & $(1,0,2)$ \\ \midrule
$\mathfrak{heis}_3$ & Lie algebra of the Heisenberg group, nilpotent & $(0,0,3)$ \\ 
\bottomrule
\end{tabular}
\label{table:three-dim-lie-algebras-uni}
\end{table}

\begin{table}
\centering
\caption{Three-dimensional non-unimodular Lie algebras}
\footnotesize
\begin{tabular}{p{30mm}p{37mm}M{45mm}} \toprule
\textbf{Lie algebra} $\mathfrak g$ & \textbf{Description} & \textbf{Signature} $\sigma(\mathfrak g)$ \\ \midrule
$\mathbb R^2 \rtimes_{\mathrm{Id}} \mathbb R$ & Lie algebra of the real hyperbolic space $\mathbb{R}\mathrm{H}^3$ as a solvable Lie group & $(1,0,2)$ \\ \midrule
$\mathfrak g(c)\coloneqq\mathbb R^2\rtimes_{D(c)}\mathbb R$ & $D(c)$ conjugate to $\left(\begin{smallmatrix}0 & -c \\ 1 & 2\end{smallmatrix}\right)$ (up to scaling), solvable & $\begin{aligned}
        (0,1,2),& \quad c>2,\\
        (0,0,3),& \quad c=2,\\
        (1,0,2),& \quad c<2.\\
    \end{aligned}$ \\ 
\bottomrule
\end{tabular}
\label{table:three-dim-lie-algebras-non-uni}
\end{table}
\begin{remark}
Comparing to the Bianchi classification: $\mathbb R^2\rtimes_{\mathrm{Id}}\mathbb R$ corresponds to type V; 
$\mathfrak g(0)$ corresponds to type III; 
$\mathfrak g(1)$ corresponds to type IV; 
$\mathfrak g(c)$ with $c<1$ and $c\neq0$ corresponds to type VI; 
finally $\mathfrak g(c)$ with $c>1$ corresponds to type VII.
\end{remark}

Let $\mathfrak g$ be a Lie algebra. Two metrics $g_1$ and $g_2$ on $\mathfrak g$ are said to be \emph{equivalent} (up to automorphism) if there exists an automorphism $A\in\mathrm{Aut}(\mathfrak g)$ such that $g_2(x,y)=g_1(Ax,Ay)$ for all $x,y\in\mathfrak g$. In terms of matrices, this equation reads $g_2=A^Tg_1A$. Riemannian and Lorentzian metrics on three-dimensional Lie algebras have been classified up to equivalence in \cite{ha-lee1} and \cite{boucetta-chakkar,ha-lee3}, respectively. We will compare our classifications with these results to explicitly identify which metrics carry left-invariant harmonic spinors.

\section{Harmonic spinors on almost Abelian Lie groups}
\label{sec:harmonic-spinors-almost-abelian-groups}

We consider the class of almost Abelian Lie algebras and obtain a general formula for the Dirac operator acting on left-invariant spinors in any dimension and signature. We then apply this formula to find left-invariant harmonic spinors on the unique non-Abelian two-dimensional Lie algebra, denoted $\mathfrak{aff}(\mathbb R)$ (the Lie algebra of the group of affine transformations of $\mathbb R$), which is in fact almost Abelian. We will show that on $\mathfrak{aff}(\mathbb R)$ left-invariant harmonic spinors exist only when a Lorentzian metric is considered, and that such a metric is unique up to equivalence. In dimension three, every Lie algebra except the simple ones, namely $\mathfrak{su}(2)$ and $\mathfrak{sl}(2,\mathbb R)$, is almost Abelian. This will be useful to classify three-dimensional Lie algebras admitting left-invariant harmonic spinors in the next sections.

\subsection{General formulas}
\label{subsec:gen-formulas}
Let $\mathfrak g$ be a real Lie algebra of dimension $n$. We say that $\mathfrak g$ is \emph{almost Abelian} if it is non-Abelian and admits a codimension one Abelian ideal $\mathfrak h$. In particular, the derived subalgebra $[\mathfrak g,\mathfrak g]\subset\mathfrak h$ is Abelian and $\mathfrak g$ is two-step solvable. We can choose a basis $\{v_1,\ldots,v_n\}$ of $\mathfrak g$ such that 
\[\mathfrak h=\mathrm{Span}_{\mathbb R}\{v_1,\ldots,v_{n-1}\}\cong \mathbb R^{n-1},\qquad\mathrm{ad}(v_n)\mathfrak h\subset\mathfrak h.\]
The whole Lie algebra structure of $\mathfrak g$ is determined by the derivation 
\[D\coloneqq\mathrm{ad}(v_n)|_{\mathfrak h}\in\mathrm{Der}(\mathfrak h)=\mathrm{End}_{\mathbb R}(\mathbb R^{n-1}),\] 
allowing us to identify $\mathfrak g$ with the semidirect product $\mathfrak h\rtimes_D\mathbb R$. More precisely, if $D=(d_{ij})$, then the Lie brackets of $\mathfrak g$ are as follows: 
\[[v_i,v_j]=0,\qquad [v_n,v_i]=Dv_i\eqqcolon\sum_{k=1}^{n-1}d_{ki}v_k\] 
for $i,j=1,\ldots,n-1$. 
Equivalently, the differentials of the elements of the dual basis $\{v^1,\ldots,v^n\}$ of $\mathfrak g^*$ are 
\begin{equation}
\label{eq:differentials_almost_Abelian}
\dd v^i=-(Dv^i)\wedge v^n=\sum_{j=1}^{n-1}d_{ij}v^j\wedge v^n,\qquad \dd v^n=0
\end{equation} for $i=1,\ldots,n-1$, where $D$ acts on a one-form $\alpha$ by $D\alpha\coloneqq-\alpha\circ D$.

Let $g$ be a pseudo-Riemannian metric on $\mathfrak g=\mathfrak h\rtimes_D\mathbb R$. 
The one-dimensional orthogonal complement $\mathfrak h^\perp$ with respect to $g$ can be either non-degenerate or null. 
Following the terminology of \cite{conti-gilgarcia}, 
in the first case we say that $(\mathfrak g=\mathfrak h\rtimes_D\mathbb R,g)$ is \emph{non-isotropic}, whereas in the second case we say that $(\mathfrak g=\mathfrak h\rtimes_D\mathbb R,g)$ is \emph{isotropic}.
\begin{remark}
\label{rmk:heis}
The almost Abelian Lie algebra $\mathfrak{heis}_3$ is the only three-dimensional almost Abelian one where the codimension one ideal $\mathfrak h$ is not unique \cite[Prop.\ 1]{freibert}, and depending on its choice $\mathfrak h^{\perp}$ may be degenerate or not.
In our general statements, such $\mathfrak h$ will always be fixed so as to avoid ambiguity.
\end{remark}
Suppose that $(\mathfrak g=\mathfrak h\rtimes_D\mathbb R,g)$ is non-isotropic. Then $\mathfrak h^\perp$ is spanned by a vector $e_n$ which we assume satisfies $\varepsilon_n=g(e_n,e_n)\in\{\pm1\}$. Take an orthonormal basis $\{e_1,\ldots,e_{n-1}\}$ of $\mathfrak h$ with respect to $g|_{\mathfrak h\times\mathfrak h}$. Then $\{e_1,\ldots,e_n\}$ is an orthonormal basis of $(\mathfrak g,g)$. We assume that $\mathfrak h=\mathrm{Span}_{\mathbb R}\{e_1,\ldots,e_{n-1}\}$ and $D=\mathrm{ad}(e_n)|_\mathfrak h$. In this situation we obtain the following formula for the Dirac operator.
\begin{proposition}
\label{prop:Dirac_almost_Abelian}
Let $(\mathfrak g=\mathfrak h\rtimes_D\mathbb R,g)$ be a non-isotropic almost Abelian pseudo-Riemannian Lie algebra and $\psi\in\Sigma$ be a left-invariant spinor. 
Then 
\[\slashed{D}\psi=-\frac{1}{4}\sum_{1\leq i<j\leq n-1}(\varepsilon_id_{ij}-\varepsilon_jd_{ji})(e^i\wedge e^j\wedge e^n)\psi-\frac{1}{2}\mathrm{tr}(D)e^n\psi,\] 
where $D=(d_{ij})$ and $\varepsilon_i=g(e_i,e_i)\in\{\pm1\}$.
\end{proposition}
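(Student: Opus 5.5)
The plan is to substitute the structure equations \eqref{eq:differentials_almost_Abelian}, written in the orthonormal basis $\{e_1,\ldots,e_n\}$, into Proposition \ref{prop:formula_Dirac_operator}. We then evaluate the two contributions $\sum_i\varepsilon_i e^i\wedge\dd e^i$ and $\sum_i\iota_{e_i}\dd e^i$ separately.

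Since $e_n$ spans $\mathfrak h^\perp$ and $\{e_1,\ldots,e_{n-1}\}$ is a basis of $\mathfrak h$, we may apply \eqref{eq:differentials_almost_Abelian} with $v_i=e_i$. This gives
\[\dd e^i=\sum_{j=1}^{n-1}d_{ij}\,e^j\wedge e^n \quad (i\le n-1),\qquad \dd e^n=0.\]
The $i=n$ term contributes nothing.

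\textbf{Three-form part.} For $i\le n-1$,
\[\varepsilon_i e^i\wedge \dd e^i=\sum_{j}\varepsilon_i d_{ij}\,e^i\wedge e^j\wedge e^n.\]
Summing over $i$, the diagonal terms $i=j$ vanish. We group each pair $\{i,j\}$ with $i<j$ using $e^j\wedge e^i=-e^i\wedge e^j$, which yields
\[\sum_{i<j\le n-1}(\varepsilon_i d_{ij}-\varepsilon_j d_{ji})\,e^i\wedge e^j\wedge e^n.\]
After the prefactor $-\tfrac14$, this is exactly the first term of the claim.

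\textbf{One-form part.} We have
\[\iota_{e_i}(e^j\wedge e^n)=\delta_{ij}e^n-\delta_{in}e^j.\]
Since $i\le n-1$, this reduces to $\iota_{e_i}\dd e^i=d_{ii}\,e^n$. Summing over $i$ gives $\mathrm{tr}(D)\,e^n$, and the factor $-\tfrac14\cdot 2$ produces $-\tfrac12\mathrm{tr}(D)e^n\psi$.

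One consistency point needs checking: the formula in Proposition \ref{prop:formula_Dirac_operator} lets forms act by the same Clifford convention for every index. So the one-form $e^n$ acting on $\psi$ is understood via $\alpha\psi=\sum_i\varepsilon_i\alpha(e_i)e_i\psi$, and the three-form is understood analogously. No further conversion of $e^n$ into $e_n$ is needed, because the statement is phrased in terms of forms.

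The main (mild) obstacle is sign bookkeeping. We must confirm the sign convention $\dd v^i=-(Dv^i)\wedge v^n$ with $D\alpha=-\alpha\circ D$, so that $\dd e^i(e_j,e_n)=d_{ij}$. This can be checked directly, since $\dd e^i(e_n,e_j)=-e^i([e_n,e_j])=-d_{ij}$. We must also get the antisymmetrization in the three-form sum right, where the $\varepsilon$ factor attaches to the first index.
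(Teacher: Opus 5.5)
Your proposal is correct and follows the paper's proof: substitute $\dd e^i=\sum_j d_{ij}\,e^j\wedge e^n$ and $\dd e^n=0$ into Proposition~\ref{prop:formula_Dirac_operator}, antisymmetrise the three-form sum, and use $\iota_{e_i}\dd e^i=d_{ii}e^n$ to obtain the trace term. The only difference is cosmetic: the paper writes $\dd e^i=-(De^i)\wedge e^n$ with $D$ acting on one-forms, whereas you work in components, which makes your sign check $\dd e^i(e_n,e_j)=-d_{ij}$ explicit.
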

\begin{proof}
The general formula for the Dirac operator is given by Proposition \ref{prop:formula_Dirac_operator}. Using \eqref{eq:differentials_almost_Abelian} we obtain 
\begin{align*}
-4\slashed{D}\psi&=\sum_{i=1}^{n-1}(\varepsilon_ie^i\wedge\dd e^i+2\iota_{e_i}\dd e^i)\psi+(\varepsilon_ne^n\wedge\dd e^n+2\iota_{e_n}\dd e^n)\psi \\
&=-\sum_{i=1}^{n-1}(\varepsilon_ie^i\wedge De^i\wedge e^n+2\iota_{e_i}(De^i)e^n)\psi.
\end{align*}
Since $De^i=-\sum_{j=1}^{n-1}d_{ij}e^j$ and $\iota_{e_i}(De^i)=-\sum_{j=1}^{n-1}d_{ij}\iota_{e_i}(e^j)=-d_{ii}$, we have 
\[\sum_{i=1}^{n-1}\iota_{e_i}(De^i)e^n\psi=-\mathrm{tr}(D)e^n\psi.\]
Now we compute 
\begin{align*}
\sum_{i=1}^{n-1}\varepsilon_ie^i\wedge De^i&=-\sum_{i,j=1}^{n-1}\varepsilon_id_{ij}e^i\wedge e^j \\
&=-\sum_{1\leq i<j\leq n-1}\varepsilon_id_{ij}e^i\wedge e^j-\sum_{1\leq j<i\leq n-1}\varepsilon_id_{ij}e^i\wedge e^j\\
&=-\sum_{1\leq i<j\leq n-1}(\varepsilon_id_{ij}-\varepsilon_jd_{ji})e^i\wedge e^j,
\end{align*} 
and we are done.
\end{proof}
Given a pseudo-Riemannian Lie algebra $(\mathfrak g,g)$ and an endomorphism $f\colon\mathfrak g\to\mathfrak g$, we denote by $f^*$ the adjoint endomorphism of $f$ with respect to the metric $g$, that is $g(f^*{}\cdot{},{}\cdot{})\coloneqq g({}\cdot{},f{}\cdot{})$.

Given an arbitrary endomorphism $D\in\mathrm{End}_{\mathbb R}(\mathbb R^{n-1})$, it is not difficult to check that 
\[\widehat{D}\coloneqq\frac{1}{2}(D+D^*)-\frac{1}{n-1}\mathrm{tr}(D)\mathrm{Id}_{n-1}\] 
satisfies $\varepsilon_i\widehat{d}_{ij}-\varepsilon_j\widehat{d}_{ji}=0$ for all $1\leq i<j\leq n-1$ and $\mathrm{tr}(\widehat{D})=0$.
\begin{corollary}
Let $(\widehat{\mathfrak g}=\mathfrak h\rtimes_{\widehat{D}}\mathbb R,g)$ be a non-isotropic almost Abelian pseudo-Riemannian Lie algebra. Then all left-invariant spinors on it are harmonic.
\end{corollary}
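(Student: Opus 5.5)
The plan is to apply Proposition \ref{prop:Dirac_almost_Abelian} directly to $\widehat{\mathfrak g}=\mathfrak h\rtimes_{\widehat D}\mathbb R$ with the derivation $\widehat D$ in place of $D$. That proposition expresses $\slashed{D}\psi$ for a left-invariant spinor $\psi$ as a sum of two terms. The first is a three-form term whose coefficients are $\varepsilon_i\widehat d_{ij}-\varepsilon_j\widehat d_{ji}$, for $1\leq i<j\leq n-1$. The second is a one-form term whose coefficient is $\tfrac12\mathrm{tr}(\widehat D)$. If both sets of coefficients vanish, then $\slashed{D}\psi=0$ for every $\psi\in\Sigma$, which is exactly the claim.

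First, I check that the hypotheses of Proposition \ref{prop:Dirac_almost_Abelian} are in force. We work in the orthonormal basis $\{e_1,\ldots,e_n\}$ adapted to the non-isotropic splitting, with $\mathfrak h=\mathrm{Span}\{e_1,\ldots,e_{n-1}\}$ and $e_n\perp\mathfrak h$. The matrix of $\widehat D$ is taken in this basis. One should note that $\widehat{\mathfrak g}$ need not be almost Abelian in the strict sense, since $\widehat D$ could vanish. In that case $\widehat{\mathfrak g}$ is Abelian, all $\dd e^i=0$, and the conclusion follows from Proposition \ref{prop:formula_Dirac_operator}. The proof of Proposition \ref{prop:Dirac_almost_Abelian} used only the structure equations \eqref{eq:differentials_almost_Abelian}, so it applies in either case.

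Second, I verify the two identities asserted just before the corollary. The adjoint $\widehat D^{*}$ is taken with respect to $g|_{\mathfrak h}$, which in the orthonormal basis is $\mathrm{diag}(\varepsilon_1,\ldots,\varepsilon_{n-1})$. Hence $(D^*)_{ij}=\varepsilon_i\varepsilon_j d_{ji}$. Setting $S=\tfrac12(D+D^*)$, we get
\[\varepsilon_i S_{ij}-\varepsilon_j S_{ji}=\tfrac12\bigl(\varepsilon_i d_{ij}+\varepsilon_j d_{ji}-\varepsilon_j d_{ji}-\varepsilon_i d_{ij}\bigr)=0,\]
using $\varepsilon_k^2=1$. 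The scalar correction $-\tfrac{1}{n-1}\mathrm{tr}(D)\mathrm{Id}$ is diagonal, so it does not affect the entries with $i<j$. For the trace, $\mathrm{tr}(D^*)=\mathrm{tr}(D)$, so $\mathrm{tr}(S)=\mathrm{tr}(D)$ and therefore $\mathrm{tr}(\widehat D)=\mathrm{tr}(D)-\mathrm{tr}(D)=0$.

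Substituting these identities into the formula of Proposition \ref{prop:Dirac_almost_Abelian} gives $\slashed{D}\psi=0$ for all $\psi\in\Sigma$. There is no real obstacle here. The only point requiring care is the sign bookkeeping for the adjoint in indefinite signature, namely the factor $\varepsilon_i\varepsilon_j$ in $(D^*)_{ij}$.
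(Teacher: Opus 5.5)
Your proposal is correct and follows the same route as the paper: substitute $\widehat D$ into Proposition \ref{prop:Dirac_almost_Abelian} and use $\varepsilon_i\widehat d_{ij}-\varepsilon_j\widehat d_{ji}=0$ and $\mathrm{tr}(\widehat D)=0$. Your explicit check via $(D^*)_{ij}=\varepsilon_i\varepsilon_j d_{ji}$ correctly fills in the verification the paper calls "not difficult to check".
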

In the Riemannian setting, we obtain the following obstruction for an almost Abelian Lie algebra to admit left-invariant harmonic spinors.
\begin{corollary}
\label{cor:almost_Abelian_non_unimodular}
Let $(\mathfrak g=\mathfrak h\rtimes_D\mathbb R,g)$ be a non-unimodular almost Abelian Riemannian Lie algebra. Then it does not admit left-invariant harmonic spinors.
\end{corollary}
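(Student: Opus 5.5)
In the Riemannian case $\mathfrak h^\perp$ is automatically non-degenerate, so $(\mathfrak g=\mathfrak h\rtimes_D\mathbb R,g)$ is non-isotropic. I would therefore apply Proposition \ref{prop:Dirac_almost_Abelian} with all $\varepsilon_i=1$, which gives
\[-4\slashed{D}\psi=\Big(\sum_{1\le i<j\le n-1}(d_{ij}-d_{ji})\,e^i\wedge e^j\wedge e^n+2\,\mathrm{tr}(D)\,e^n\Big)\psi .\]
Non-unimodularity means $\mathrm{tr}(\mathrm{ad}(e_n))=\mathrm{tr}(D)\neq0$, since $\mathrm{ad}(e_i)$ for $i<n$ is nilpotent and hence traceless. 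So the goal is to show that the element $\Phi\coloneqq A\wedge e^n+2\,\mathrm{tr}(D)\,e^n$ of the Clifford algebra acts invertibly on $\Sigma$, where $A\coloneqq\sum_{i<j}(d_{ij}-d_{ji})e^i\wedge e^j$ is a two-form on $\mathfrak h$.

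The key step is to observe that $e^n$ anticommutes with each $e_i$ for $i<n$, and hence commutes with the two-form $A$. It follows that $A\wedge e^n=A\,e^n$ as Clifford elements, so
\[\Phi=(A+2\,\mathrm{tr}(D))\,e^n .\]
Since $e_n^2=-1$, the factor $e^n$ is invertible. It then suffices to show that $A+2\,\mathrm{tr}(D)$ is invertible.

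For this I would use that, in Riemannian signature, Clifford multiplication by a two-form is skew-Hermitian with respect to the standard Hermitian product on $\Sigma$, because each $e_i$ is skew-Hermitian and $e_ie_j$ with $i\neq j$ is skew-Hermitian. Hence $A$ has purely imaginary eigenvalues. Adding the nonzero real scalar $2\,\mathrm{tr}(D)$ then gives an operator with eigenvalues $2\,\mathrm{tr}(D)+i\lambda\neq0$, which is therefore invertible. Consequently $\slashed{D}\psi=0$ forces $\psi=0$.

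The only delicate point is verifying the skew-Hermitian property together with the commutation $A\,e^n=e^n A$. Both are standard for positive definite Clifford algebras with the paper's convention $xy+yx=-2\langle x,y\rangle$.
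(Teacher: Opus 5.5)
Your proposal is correct and follows essentially the same route as the paper: factor out the invertible $e^n$, reduce to the operator $\omega+\mathrm{tr}(D)\,\mathrm{Id}_{\Sigma}$ where $\omega$ is a skew-Hermitian two-form on $\mathfrak h$ (your $A=2\omega$), and use $\mathrm{tr}(D)\neq0$. The only cosmetic difference is in the final step: you argue via the purely imaginary spectrum of the skew-Hermitian part, whereas the paper expands $\lVert\omega\psi+\mathrm{tr}(D)\psi\rVert^2=\lVert\omega\psi\rVert^2+\mathrm{tr}(D)^2\lVert\psi\rVert^2$.
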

\begin{proof}
An almost Abelian Lie algebra is unimodular if and only if $\mathrm{tr}(D)=0$. 
Now note that we can rewrite the formula in Proposition \ref{prop:Dirac_almost_Abelian} as 
\[\slashed{D}\psi=-\frac{1}{2}e^n(\omega+\mathrm{tr}(D)\mathrm{Id}_{\Sigma})\psi,\] 
with 
\[\omega=\frac{1}{2}\sum_{1\leq i<j\leq n-1}(d_{ij}-d_{ji})e^i\wedge e^j.\]
Since the endomorphism associated to the basis vector $e^n$ is invertible, we have that $\slashed{D}\psi=0$ if and only if $(\omega+\mathrm{tr}(D)\mathrm{Id}_{\Sigma})\psi=0$. 
Take the standard Hermitian inner product $\langle{}\cdot{},{}\cdot{}\rangle$ on $\Sigma$, which satisfies 
\[\langle\alpha\psi_1,\psi_2\rangle+\langle\psi_1,\alpha \psi_2\rangle=0\] 
for all $\alpha\in\mathfrak g^*$ and all $\psi_1,\psi_2\in\Sigma$. 
Then $\langle\omega\psi,\psi\rangle=-\langle\psi,\omega\psi\rangle$. 
Now we compute the square norm of $\omega\psi+\mathrm{tr}(D)\psi$ for a non-zero $\psi\in\Sigma$: 
\[\lVert\omega\psi+\mathrm{tr}(D)\psi\rVert^2=\lVert\omega\psi\rVert^2+\mathrm{tr}(D)(\langle\omega\psi,\psi\rangle+\langle\psi,\omega\psi\rangle)+\mathrm{tr}(D)^2\lVert\psi\rVert^2.\]
The term in the middle vanishes and $\mathrm{tr}(D)\neq0$ since $\mathfrak g$ is non-unimodular, whence
\[\lVert\omega\psi+\mathrm{tr}(D)\psi\rVert^2=\lVert\omega\psi\rVert^2+\mathrm{tr}(D)^2\lVert\psi\rVert^2\geq\mathrm{tr}(D)^2\lVert\psi\rVert^2>0,\] 
and there are no left-invariant harmonic spinors.
\end{proof}

Suppose that $(\mathfrak g=\mathfrak h\rtimes_D\mathbb R,g)$ is isotropic. 
Then $\mathfrak h^\perp\subset\mathfrak h$ and the metric $g$ on $\mathfrak h$ is degenerate. 
Indeed, if $\mathfrak h^\perp\not\subset\mathfrak h$, then $\mathfrak h^\perp$ would be orthogonal to $\mathfrak g=\mathfrak h\oplus\mathfrak h^\perp$, but this is impossible as $g$ is non-degenerate. 
Fix a generator $v_{n-1}$ of $\mathfrak h^\perp$ and take a complement $\mathfrak{v}$ of $\mathrm{Span}_{\mathbb R}\{v_{n-1}\}$ inside $\mathfrak h$. 
Then the complement $\mathfrak v$ is non-degenerate with respect to $g|_{\mathfrak v\times\mathfrak v}$ and we can choose an orthonormal basis $\{v_1,\dots,v_{n-2}\}$ of $(\mathfrak v,g|_{\mathfrak v \times\mathfrak v})$. 
We now choose a vector $v_n$ orthogonal to $\mathfrak{v}$ such that $g(v_n,v_n)=0$ and $g(v_{n-1},v_n)=1$. 
The metric $g$ in the basis $\{v_1,\dots,v_{n-2},v_{n-1},v_n\}$ is 
\[g=\sum_{i=1}^{n-2}\varepsilon_iv^i\otimes v^i+v^{n-1}\odot v^n,\] 
where $v^i\odot v^j\coloneqq v^i\otimes v^j+v^j\otimes v^i$. 
An orthonormal basis of $(\mathfrak g,g)$ is given by 
\[e_i=v_i,\qquad e_{n-1}=\frac{\sqrt{2}}{2}(v_{n-1}+v_n),\qquad e_n=\frac{\sqrt{2}}{2}(v_{n-1}-v_n),\] 
for $i=1,\ldots,n-2$. 
The non-trivial Lie brackets of $\mathfrak g=\mathfrak h\rtimes_D\mathbb R$ in this basis are
\begin{align*}
[e_i,e_{n-1}]&=-\frac{\sqrt{2}}{2}\sum_{j=1}^{n-2}d_{ji}e_j-\frac{d_{n-1,i}}{2}(e_{n-1}+e_n),\\
[e_i,e_n]&=\frac{\sqrt{2}}{2}\sum_{j=1}^{n-2}d_{ji}e_j+\frac{d_{n-1,i}}{2}(e_{n-1}+e_n),\\
[e_{n-1},e_n]&=\sum_{j=1}^{n-2}d_{j,n-1}e_j+\frac{\sqrt{2}d_{n-1,n-1}}{2}(e_{n-1}+e_n),
\end{align*} 
for all $i,j=1,\dots,n-2$. The differentials of the dual basis $\{e^1,\dots,e^n\}$ are given by 
\begin{align}
\label{eq:differentials_isotropic1}
\sqrt 2\dd e^i&=\sum_{j=1}^{n-2}d_{ij}e^j\wedge e^{n-1}-\sum_{j=1}^{n-2}d_{ij}e^j\wedge e^n-\sqrt 2d_{i,n-1}e^{n-1}\wedge e^n, \\
\label{eq:differentials_isotropic2}
2\dd e^{\ell}&= \sum_{j=1}^{n-2}d_{n-1,j}e^j\wedge e^{n-1}-\sum_{j=1}^{n-2}d_{n-1,j}e^j\wedge e^n -\sqrt{2}d_{n-1,n-1}e^{n-1}\wedge e^n, 
\end{align}
for all $i=1,\ldots,n-2$, and $\ell\in \{n-1,n\}$. 
We then have the following formula for the Dirac operator in the isotropic case.

\begin{proposition}
\label{prop:Dirac_almost_Abelian_isotropic}
Let $(\mathfrak g=\mathfrak h\rtimes_D\mathbb R,g)$ be an isotropic almost Abelian pseudo-Riemannian Lie algebra and $\psi\in\Sigma$ a left-invariant spinor. Then
\begin{align*}
\slashed{D}\psi&=-\frac{\sqrt{2}}{8}\sum_{1\leq i<j\leq n-2}(\varepsilon_id_{ij}-\varepsilon_jd_{ji})(e^i\wedge e^j\wedge(e^{n-1}-e^n))\psi\\
&\quad+\frac{1}{4}\sum_{i=1}^{n-2}\varepsilon_id_{i,n-1}(e^i\wedge e^{n-1}\wedge e^n)\psi-\frac{\sqrt{2}}{4}\mathrm{tr}(D)(e^{n-1}-e^n)\psi,
\end{align*} 
where $D=(d_{ij})$ and $\varepsilon_i=g(e_i,e_i)\in\{\pm1\}$.
\end{proposition}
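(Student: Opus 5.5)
Substitute the differentials \eqref{eq:differentials_isotropic1}--\eqref{eq:differentials_isotropic2} into the general formula of Proposition~\ref{prop:formula_Dirac_operator}. We have $\varepsilon_{n-1}=1$ and $\varepsilon_n=-1$ for the orthonormal basis $e_{n-1}=\tfrac{\sqrt2}{2}(v_{n-1}+v_n)$, $e_n=\tfrac{\sqrt2}{2}(v_{n-1}-v_n)$. It is convenient to introduce the one-form $\theta\coloneqq e^{n-1}-e^n$, so that
\[
\sqrt2\,\dd e^i=\sum_{j\le n-2}d_{ij}e^j\wedge\theta-\sqrt2 d_{i,n-1}e^{n-1}\wedge e^n,
\qquad
2\,\dd e^{n-1}=2\,\dd e^n=\sum_{j\le n-2}d_{n-1,j}e^j\wedge\theta-\sqrt2 d_{n-1,n-1}e^{n-1}\wedge e^n .
\]
We then compute separately the wedge part $\sum_i\varepsilon_i e^i\wedge \dd e^i$ and the contraction part $2\sum_i\iota_{e_i}\dd e^i$, splitting the index range into $i\le n-2$ and $i\in\{n-1,n\}$.

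For $i\le n-2$, the term $e^i\wedge\dd e^i$ contributes
\[
\tfrac{\sqrt2}{2}\sum_{i,j\le n-2}\varepsilon_i d_{ij}\,e^i\wedge e^j\wedge\theta-\sum_i\varepsilon_i d_{i,n-1}\,e^i\wedge e^{n-1}\wedge e^n .
\]
Antisymmetrising in $i<j$ exactly as in the proof of Proposition~\ref{prop:Dirac_almost_Abelian}, this becomes
\[
\tfrac{\sqrt2}{2}\sum_{i<j}(\varepsilon_id_{ij}-\varepsilon_jd_{ji})\,e^i\wedge e^j\wedge\theta .
\]
After multiplying by $-\tfrac14$, this gives the first two terms of the claim.

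For $\ell\in\{n-1,n\}$, the combination $\varepsilon_{n-1}e^{n-1}\wedge\dd e^{n-1}+\varepsilon_n e^n\wedge \dd e^n$ equals $\theta\wedge\dd e^{n-1}$, because $\dd e^{n-1}=\dd e^n$. Since $\dd e^{n-1}$ is built from $e^j\wedge\theta$ and $e^{n-1}\wedge e^n=e^{n-1}\wedge(e^{n-1}-\theta)$, the product $\theta\wedge\dd e^{n-1}$ reduces to $\theta\wedge(-\tfrac{\sqrt2}{2}d_{n-1,n-1})e^{n-1}\wedge e^n$. This vanishes because $\theta\wedge e^{n-1}\wedge e^n=0$. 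So the $d_{n-1,j}$ entries never appear, which is consistent with the statement.

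Next come the contractions. For $i\le n-2$ we get $\iota_{e_i}\dd e^i=\tfrac{\sqrt2}{2}d_{ii}\theta$, since the $e^{n-1}\wedge e^n$ term has no $e_i$-component. For $\ell=n-1,n$ we get $\iota_{e_{n-1}}\dd e^{n-1}+\iota_{e_n}\dd e^n$. Here the $e^j\wedge\theta$ pieces give $-\tfrac12\sum_j d_{n-1,j}e^j\,(\theta(e_{n-1})+\theta(e_n))$. Since $\theta(e_{n-1})=1$ and $\theta(e_n)=-1$, these cancel. The $e^{n-1}\wedge e^n$ piece gives $-\tfrac{\sqrt2}{4}d_{n-1,n-1}(e^n-e^{n-1})=\tfrac{\sqrt2}{4}d_{n-1,n-1}\theta$.

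Summing, $2\sum_i\iota_{e_i}\dd e^i=\sqrt2\bigl(\sum_{i\le n-2}d_{ii}+\tfrac12 d_{n-1,n-1}\bigr)\theta$. Matching the claimed coefficient $-\tfrac{\sqrt2}{4}\mathrm{tr}(D)\theta$ requires this to equal $\sqrt2\,\mathrm{tr}(D)\theta$, where $\mathrm{tr}(D)=\sum_{i\le n-1}d_{ii}$. This is the main obstacle: I expect the apparent factor $\tfrac12$ to be a bookkeeping artefact of how $\dd e^{n-1}$ and $\dd e^n$ are written. I would recheck \eqref{eq:differentials_isotropic2} directly from the brackets by computing $\dd e^\ell(e_{n-1},e_n)=-e^\ell([e_{n-1},e_n])$, and likewise the coefficient of $e^{n-1}\wedge e^n$ in $\dd e^i$. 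Carefully tracking the signs $\varepsilon_{n-1}=1$ and $\varepsilon_n=-1$ in $\iota$ and in the identification of one-forms with vectors is the step I expect to produce the correct $d_{n-1,n-1}$ contribution. Finally, multiplying everything by $-\tfrac14$ gives the stated formula.
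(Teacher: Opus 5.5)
Your route is the same as the paper's. You substitute \eqref{eq:differentials_isotropic1}--\eqref{eq:differentials_isotropic2} into Proposition~\ref{prop:formula_Dirac_operator}, antisymmetrise the wedge terms for $i\le n-2$, and note that $\varepsilon_{n-1}e^{n-1}\wedge\dd e^{n-1}+\varepsilon_ne^n\wedge\dd e^n=\theta\wedge\dd e^{n-1}=0$. You also observe that the $d_{n-1,j}$ contributions to the contractions cancel. All of that is correct.

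As written, however, the proposal does not reach the statement. The ``main obstacle'' you identify is an arithmetic slip on your side, not an inaccuracy in the paper's formulas. The $e^{n-1}\wedge e^n$ part of $\dd e^{n-1}=\dd e^n$ is $-\tfrac{\sqrt2}{2}d_{n-1,n-1}\,e^{n-1}\wedge e^n$: you divide \eqref{eq:differentials_isotropic2} by $2$ only once. Equivalently, $\dd e^\ell(e_{n-1},e_n)=-e^\ell([e_{n-1},e_n])=-\tfrac{\sqrt2}{2}d_{n-1,n-1}$ from the bracket $[e_{n-1},e_n]$. Since $\iota_{e_{n-1}}(e^{n-1}\wedge e^n)+\iota_{e_n}(e^{n-1}\wedge e^n)=e^n-e^{n-1}=-\theta$, this piece contributes $+\tfrac{\sqrt2}{2}d_{n-1,n-1}\theta$, not $\tfrac{\sqrt2}{4}d_{n-1,n-1}\theta$. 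Hence
\[\sum_{i=1}^n\iota_{e_i}\dd e^i=\tfrac{\sqrt2}{2}\Bigl(\sum_{i\le n-2}d_{ii}+d_{n-1,n-1}\Bigr)\theta=\tfrac{\sqrt2}{2}\,\mathrm{tr}(D)\,\theta,\]
and $-\tfrac14\cdot 2\sum_i\iota_{e_i}\dd e^i=-\tfrac{\sqrt2}{4}\mathrm{tr}(D)\theta$, exactly as claimed. This is the paper's computation.

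Your proposed remedy also points the wrong way. The differentials \eqref{eq:differentials_isotropic1}--\eqref{eq:differentials_isotropic2} are correct. The interior product $\iota_{e_i}\alpha=\alpha(e_i,\cdot)$ involves no metric, so there are no extra $\varepsilon$ signs to track there. The $\varepsilon_i$ enter Proposition~\ref{prop:formula_Dirac_operator} only through the wedge term and through the Clifford action of forms, which the statement leaves implicit.

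A quick check that would have caught the slip is $n=2$, i.e.\ $\mathfrak{aff}(\mathbb R)$. There $\dd e^1=\dd e^2=-\tfrac{\sqrt2}{2}d\,e^1\wedge e^2$, so $\slashed{D}\psi=-\tfrac{\sqrt2}{4}d\,(e^1-e^2)\psi$, matching Section~\ref{subsec:harmonic-spinors-two-dim}. Your coefficient would instead give $-\tfrac{\sqrt2}{8}d$. With that single coefficient corrected, your argument is complete.
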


\begin{proof}
The general formula for the Dirac operator is given by Proposition \ref{prop:formula_Dirac_operator}. 
We compute each term separately using \eqref{eq:differentials_isotropic1}--\eqref{eq:differentials_isotropic2}. 
First we compute 
\begin{align*}
\iota_{e_i}\dd e^i&=\frac{\sqrt{2}}{2}d_{ii}e^{n-1}-\frac{\sqrt{2}}{2}d_{ii}e^n,\\
\iota_{e_{n-1}}\dd e^{n-1}&=-\frac{1}{2}\sum_{j=1}^{n-2}d_{n-1,j}e^j-\frac{\sqrt{2}d_{n-1,n-1}}{2}e^n,\\
\iota_{e_n}\dd e^n&=\frac{1}{2}\sum_{j=1}^{n-2}d_{n-1,j}e^j+\frac{\sqrt{2}d_{n-1,n-1}}{2}e^{n-1}.
\end{align*}
Summing all terms we obtain 
\[\sum_{i=1}^n\iota_{e_i}\dd e^i=\frac{\sqrt{2}}{2}\mathrm{tr}(D)(e^{n-1}-e^n).\]
Now we compute the second term. 
First note that $\varepsilon_{n-1}=1$ and $\varepsilon_n=-1$. 
Hence, we can easily show that $\varepsilon_{n-1}e^{n-1}\wedge\dd e^{n-1}+\varepsilon_ne^n\wedge\dd e^n=0$. 
We also have 
\begin{align*}
\varepsilon_ie^i\wedge\dd e^i&=\frac{\sqrt{2}}{2}\sum_{j=1}^{n-2}\varepsilon_id_{ij}e^i\wedge e^j\wedge e^{n-1}-\frac{\sqrt{2}}{2}\sum_{j=1}^{n-2}\varepsilon_id_{ij}e^i\wedge e^j\wedge e^n \\
& \qquad -\varepsilon_id_{i,n-1}e^i\wedge e^{n-1}\wedge e^n.
\end{align*}
Summing over $i$ we obtain 
\begin{align*}
\sum_{i=1}^{n-2}\varepsilon_ie^i\wedge\dd e^i&=\frac{\sqrt{2}}{2}\sum_{1\leq i<j\leq n-2}(\varepsilon_id_{ij}-\varepsilon_jd_{ji})e^i\wedge e^j\wedge e^{n-1}\\
&\quad-\frac{\sqrt{2}}{2}\sum_{1\leq i<j\leq n-2}(\varepsilon_id_{ij}-\varepsilon_jd_{ji})e^i\wedge e^j\wedge e^n\\
&\quad-\sum_{i=1}^{n-2}\varepsilon_id_{i,n-1}e^i\wedge e^{n-1}\wedge e^n.
\end{align*}
Putting everything together we obtain the result.
\end{proof}

\subsection{Harmonic spinors on two-dimensional Lie groups}
\label{subsec:harmonic-spinors-two-dim}
Let $\mathfrak g$ be a two-dimensional almost Abelian Lie algebra, that is, $\mathfrak g=\mathbb R\rtimes_D\mathbb R$ for $D=d\in\mathbb R\setminus\{0\}$. 
Then $\mathfrak g=\mathfrak{aff}(\mathbb R)$ is non-unimodular since $\mathrm{tr}(D)=d$. 
Hence, by Corollary \ref{cor:almost_Abelian_non_unimodular}, we know that $(\mathfrak g,g)$ does not admit left-invariant harmonic spinors for any Riemannian metric $g$.

Let $g$ be a Lorentzian metric on $\mathfrak g$ and let $\{e_1,e_2\}$ be an orthonormal basis of $(\mathfrak g,g)$ with $\varepsilon_1=-\varepsilon_2=1$. 
Consider the Clifford algebra $\mathrm{Cl}(1,1)$ generated by $\{e^1,e^2\}$ and satisfying the relations 
\[e^ie^j+e^je^i=-2g(e^i,e^j), \qquad i=1,2,\] 
Recall that the standard representation $\Sigma$ of $\mathrm{Cl}(1,1)$ is complex and $\Sigma=\mathbb C^2$.
In the non-isotropic case, we use Proposition \ref{prop:Dirac_almost_Abelian} to compute the Dirac operator acting on a left-invariant spinor $\psi\in\Sigma$: 
\[\slashed{D}\psi=-\frac{1}{2}\mathrm{tr}(D)e^2\psi.\]
Since the endomorphism associated to $e^2$ squares to the identity map on $\Sigma$, the Dirac operator $\slashed{D}\in\mathrm{End}_{\mathbb C}(\Sigma)$ is invertible, so it has no kernel and left-invariant harmonic spinors do not exist. 
In the isotropic case, we use Proposition \ref{prop:Dirac_almost_Abelian_isotropic}: 
\[\slashed{D}\psi=-\frac{\sqrt{2}}{4}\mathrm{tr}(D)(e^1-e^2)\psi.\]
Since $(e^1-e^2)^2=0$ and $\Sigma=\mathbb C^2$, then the endomorphism $\slashed{D}\in\mathrm{End}_{\mathbb C}(\Sigma)$ has a one-dimensional kernel, and left-invariant harmonic spinors exist. 
Moreover, a direct computation shows that in this case the Lorentzian Lie algebra $(\mathfrak g,g=v^1\odot v^2)$ is flat.

We now show that the above metric $g$ is the unique metric on $\mathfrak{aff}(\mathbb R)$ that admits left-invariant harmonic spinors. 
The Lie algebra $\mathfrak g$ is spanned by $\{v_1,v_2\}$ satisfying $[v_1,v_2]=dv_1$ and the flat metric is $g=v^1\odot v^2$. 
By the change of basis $v_1\mapsto v_1$ and $v_2\mapsto v_2/d$, the Lie bracket becomes $[v_1,v_2]=v_1$ and the metric becomes $g=v_1\odot v_2/d$, which is equivalent to $v_1\odot v_2$. 
In fact, we have the following classification.

\begin{proposition}
Any Lorentzian metric on $\mathfrak g=\mathfrak{aff}(\mathbb R)$ is equivalent to a metric whose associated matrix with respect to the basis $\{v_1,v_2\}$ is of the following form: 
\[g_+(t)=\begin{pmatrix}1&0\\0&-t\end{pmatrix},\qquad g_-(t)=\begin{pmatrix}-1&0\\0&t\end{pmatrix},\qquad g_0=\begin{pmatrix}0&1\\1&0\end{pmatrix},\] 
where $t>0$. The metrics $g_{\pm}(t)$ have $\mathrm{Scal}(g_{\pm}(t))=\pm 2/t$ and the metric $g_0$ is flat.
\end{proposition}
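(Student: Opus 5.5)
The classification splits according to whether the ideal $\mathfrak h=\mathrm{Span}_{\mathbb R}\{v_1\}=[\mathfrak g,\mathfrak g]$ is null or non-degenerate for $g$. First I will compute $\mathrm{Aut}(\mathfrak{aff}(\mathbb R))$ in the basis $\{v_1,v_2\}$ with $[v_1,v_2]=v_1$. An automorphism must preserve $[\mathfrak g,\mathfrak g]$, so $Av_1=av_1$ with $a\neq0$. Write $Av_2=bv_1+cv_2$. The bracket relation gives $a[v_1,bv_1+cv_2]=acv_1=av_1$, hence $c=1$. So
\[A=\begin{pmatrix}a&b\\0&1\end{pmatrix},\qquad a\in\mathbb R\setminus\{0\},\ b\in\mathbb R.\]
For a general symmetric matrix $g=\left(\begin{smallmatrix}p&q\\q&r\end{smallmatrix}\right)$ with $pr-q^2<0$, I then compute
\[A^TgA=\begin{pmatrix}a^2p & a(bp+q)\\ a(bp+q) & b^2p+2bq+r\end{pmatrix}.\]

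In the case $p\neq0$, where $\mathfrak h$ is non-degenerate, I choose $b=-q/p$ to kill the off-diagonal entry. The $(2,2)$ entry then becomes $r-q^2/p=\det(g)/p$, and it is invariant in this case. Next I choose $a=1/\sqrt{|p|}$ to normalise the $(1,1)$ entry to $\pm1$. Because the signature is Lorentzian, the $(2,2)$ entry has the opposite sign. This gives $g_+(t)$ with $t=-\det(g)/p>0$ when $p>0$, and $g_-(t)$ when $p<0$.

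In the case $p=0$, where $\mathfrak h$ is null, we must have $q\neq0$. I choose $b=-r/(2q)$ to kill the $(2,2)$ entry and $a=1/q$ to make the off-diagonal entry equal to $1$, which gives $g_0$. I will also note that $t$ is a genuine invariant, since $a$ does not rescale $v_2$; this is not needed for the existence statement but shows the list is irredundant.

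For the curvature, I would use that in dimension two $\mathrm{Scal}=2K$, with $K=R(e_1,e_2,e_2,e_1)/(\varepsilon_1\varepsilon_2)$ for an orthonormal frame. For $g_\pm(t)$, I take $e_1=v_1$ and $e_2=v_2/\sqrt t$. Then $[e_1,e_2]=e_1/\sqrt t$, and the Koszul formula of Proposition~\ref{prop:formula_Dirac_operator} gives the connection. In particular $\nabla_{e_1}e_1=\pm\varepsilon\, e_2/\sqrt t$ up to signs, and $\nabla_{e_2}=0$ on the frame. The standard computation for a $2$-dimensional metric Lie algebra with $[e_1,e_2]=\lambda e_1$ gives $\mathrm{Scal}=-2\lambda^2\varepsilon_2$. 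With $\lambda^2=1/t$, this yields $\mathrm{Scal}(g_+(t))=2/t$ (since $\varepsilon_2=-1$) and $\mathrm{Scal}(g_-(t))=-2/t$.

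The main obstacle is keeping the sign conventions straight, both for the curvature and for $\varepsilon_i$. I would double-check these by the Riemannian analogue: there $\varepsilon_2=1$ gives the hyperbolic plane with $\mathrm{Scal}=-2\lambda^2$, which confirms the formula.

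For $g_0$, flatness was already observed above. I would confirm it directly: in the null basis the Koszul formula gives $\nabla_{v_2}v_1=0$, $\nabla_{v_1}v_1=0$, $\nabla_{v_1}v_2=v_1$ and $\nabla_{v_2}v_2=-v_2$. From these, $R(v_1,v_2)v_1=0$ follows by direct substitution, which is enough in dimension two.
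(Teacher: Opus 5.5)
Your classification is the paper's proof essentially verbatim: the same automorphism group, the same formula for $A^TgA$, and the same choices of $a$ and $b$ in the cases $g(v_1,v_1)\neq0$ and $g(v_1,v_1)=0$. Your curvature computation for $g_\pm(t)$ via $\mathrm{Scal}=-2\lambda^2\varepsilon_2$ with $\lambda=1/\sqrt t$ is also correct; the paper itself only says ``a computation''.

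The flatness check for $g_0$, however, is wrong as written. The coefficients you list are not the Levi-Civita connection of $g_0$. For example, $\nabla_{v_1}v_2=v_1$ would give $0=v_1\bigl(g_0(v_2,v_2)\bigr)=2g_0(\nabla_{v_1}v_2,v_2)=2$. The Koszul formula actually gives
\[
\nabla_{v_1}v_1=0,\qquad \nabla_{v_1}v_2=0,\qquad \nabla_{v_2}v_1=-v_1,\qquad \nabla_{v_2}v_2=v_2.
\]
This is not cosmetic. For the connection you wrote down, $R(v_1,v_2)v_1=0$ but $R(v_1,v_2)v_2=-2v_1\neq0$, so your step ``this is enough in dimension two'' fails for it. That step is valid only because $R(v_1,v_2)$ is $g_0$-skew, hence equal to $\alpha\,\mathrm{diag}(1,-1)$ in the null basis; this requires a metric connection. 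With the correct coefficients, $R(v_1,v_2)v_1=\nabla_{v_1}(-v_1)-\nabla_{v_2}(0)-\nabla_{v_1}v_1=0$. Skewness then forces $R=0$, and your conclusion that $g_0$ is flat stands.
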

\begin{proof}
It is easily shown that
\[\mathrm{Aut}(\mathfrak g)\coloneqq\{A\in\mathrm{GL}(\mathfrak g)\mid A[x,y]=[Ax,Ay], \text{ for }x,y\in\mathfrak g\}=\left\{\begin{pmatrix}a&b\\0&1\end{pmatrix}\mid a\neq0\right\},\]
where elements in the latter set are expressed with respect to the ordered basis $\{v_1,v_2\}$ such that $[v_1,v_2]=v_1$.

Let $g=\left(\begin{smallmatrix}x&y\\y&z\end{smallmatrix}\right)$ be a Lorentzian metric on $\mathfrak g$, thus $xz<y^2$, and let $A\in\mathrm{Aut}(\mathfrak g)$ be an automorphism of $\mathfrak g$. 
Then 
\[A^TgA=\begin{pmatrix}a&0\\b&1\end{pmatrix}\begin{pmatrix}x&y\\y&z\end{pmatrix}\begin{pmatrix}a&b\\0&1\end{pmatrix}=\begin{pmatrix}a^2x&abx+ay\\abx+ay&b^2x+2by+z\end{pmatrix}.\]
We now have two cases.
\begin{enumerate}
\item If $x\neq 0$, choose $a=1/\sqrt{|x|}$, $b=-y/x$, and set $t= (y^2-xz)/|x|>0$.
Then $g$ is equivalent to $g_{\mathrm{sgn}(x)}(t)$. 
It is easy to check that two metrics with $t_1\neq t_2$ are not equivalent. 
\item If $x=0$, choose $a=1/y$ and $b=-z/2y$.
Then $g$ is equivalent to $g_0$. 
\end{enumerate}
A computation in each case gives the curvature conditions in the statement.
\end{proof}
We have proved the following result.
\begin{theorem}
\label{thm:main-thm-two-dim}
The metric Lie algebra $(\mathfrak g=\mathfrak{aff}(\mathbb R),g)$ admits left-invariant harmonic spinors if and only if $g$ is equivalent to the Lorentzian metric $g_0$. Moreover, the space of left-invariant harmonic spinors is one-dimensional.
\end{theorem}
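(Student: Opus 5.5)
The theorem collects the computations made above, so the plan is to organise them into a complete case analysis over all metrics on $\mathfrak{aff}(\mathbb R)$. In dimension two the only codimension one Abelian ideal is $\mathfrak h=[\mathfrak g,\mathfrak g]=\mathrm{Span}_{\mathbb R}\{v_1\}$, so there is no ambiguity of the kind mentioned in Remark \ref{rmk:heis}. Every metric $g$ is either Riemannian or Lorentzian, since the negative definite case reduces to the Riemannian one after changing the sign of $g$, which only alters the Clifford conventions by signs. In the Riemannian case, $\mathrm{tr}(D)=d\neq 0$, so Corollary \ref{cor:almost_Abelian_non_unimodular} rules out left-invariant harmonic spinors.

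In the Lorentzian case I will split according to whether $\mathfrak h^\perp$ is non-degenerate or null. If $(\mathfrak g,g)$ is non-isotropic, Proposition \ref{prop:Dirac_almost_Abelian} gives $\slashed{D}\psi=-\frac12\mathrm{tr}(D)e^2\psi$; the three-form term is absent because $n-1=1$. This operator is invertible because $(e^2)^2=\pm\mathrm{Id}$, so no harmonic spinors exist. If $(\mathfrak g,g)$ is isotropic, Proposition \ref{prop:Dirac_almost_Abelian_isotropic} gives $\slashed{D}\psi=-\frac{\sqrt2}{4}d\,(e^1-e^2)\psi$. Here $N:=(e^1-e^2)$ satisfies $N^2=-g(e^1-e^2,e^1-e^2)=0$ and $N\neq 0$ on $\Sigma=\mathbb C^2$, because $\gamma$ is injective on $\mathrm{Cl}(1,1)\cong M_2(\mathbb R)$. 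A nonzero nilpotent $2\times 2$ matrix has rank one, so $\ker\slashed{D}$ is one-dimensional.

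It remains to identify the isotropic Lorentzian metrics with the class of $g_0$ using the preceding classification proposition. In the basis with $[v_1,v_2]=v_1$, the condition that $\mathfrak h^\perp$ is null is equivalent to $\mathfrak h^\perp=\mathfrak h$, that is, $g(v_1,v_1)=x=0$. In the proof of the classification, $x=0$ is exactly the case in which $g$ is equivalent to $g_0$, while $x\neq0$ gives $g_\pm(t)$. Conversely, $g_0$ has $x=0$, and isotropy is preserved by automorphisms: any automorphism preserves $\mathfrak h=[\mathfrak g,\mathfrak g]$, and the upper triangular form of $\mathrm{Aut}(\mathfrak g)$ multiplies $x$ by $a^2$. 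Hence the existence of harmonic spinors is an invariant of the equivalence class, and combining the cases yields both the equivalence and the one-dimensionality.

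The main obstacle is bookkeeping rather than a conceptual difficulty. The formulas were derived in particular adapted bases, and the argument relies on the invariance under equivalence just noted: an automorphism relating two metrics induces a correspondence between their left-invariant spinors that intertwines the two Dirac operators. I will state this invariance explicitly at the start of the proof.
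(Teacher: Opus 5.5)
Your proposal is correct and follows the paper's own argument. Corollary \ref{cor:almost_Abelian_non_unimodular} handles Riemannian metrics, Proposition \ref{prop:Dirac_almost_Abelian} gives an invertible $\slashed{D}$ in the non-isotropic Lorentzian case, Proposition \ref{prop:Dirac_almost_Abelian_isotropic} gives a nonzero nilpotent $\slashed{D}$ (hence a one-dimensional kernel) in the isotropic case, and in the classification of Lorentzian metrics on $\mathfrak{aff}(\mathbb R)$ isotropy is exactly the condition $x=g(v_1,v_1)=0$ defining the class of $g_0$.

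The points you make explicit are correct and only fill in details the paper leaves implicit: reducing negative definite metrics to the Riemannian case (via $\gamma\mapsto i\gamma$, which multiplies $\slashed{D}$ by a unit), the faithfulness argument showing $e^1-e^2$ acts nontrivially on $\Sigma$, and the invariance of $x=0$ under $\mathrm{Aut}(\mathfrak g)$.
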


\section{Harmonic spinors on three-dimensional Riemannian Lie groups}
\label{sec:harmonic-spinors-riem-three-dim}

We start looking at the three-dimensional case. 
We find equivalent conditions for a Lie group to admit left-invariant harmonic spinors in terms of constraints on the structure equations for the corresponding Lie algebras. 
In particular, in the Riemannian case we find that Lie algebras admitting left-invariant harmonic spinors are necessarily unimodular. 
As observed in Section \ref{subsec:four-dim-non-uni-ex}, this fact does not generalise to dimension four. 
We then proceed with the classification of those Lie algebras and metrics carrying harmonic spinors. 
We discuss the Lorentzian case in the next section.

\subsection{General results}
\label{subsec:general-results}
Let $(\mathfrak g,g)$ be a three-dimensional Riemannian Lie algebra. Let $\{e_1,e_2,e_3\}$ be an orthonormal basis of $(\mathfrak g,g)$, and $\{e^1,e^2,e^3\}$ be its dual. 
Write
\begin{equation}
\label{eq:str-eq-setup}
\dd e^1= \sum_{i<j}a_{ij}e^i\wedge e^j, \qquad \dd e^2= \sum_{i<j}b_{ij}e^i\wedge e^j, \qquad \dd e^3=\sum_{i<j}c_{ij}e^i\wedge e^j, 
\end{equation} 
where $a_{ij},b_{ij},c_{ij} \in \mathbb R$, $1\leq i<j\leq3$. The condition $\dd^2=0$ is equivalent to the identities
\begin{equation}
\label{eq:d^2=0}
\begin{cases}
a_{23}b_{12}-a_{12}b_{23}+a_{23}c_{13}-a_{13}c_{23}=0,\\
a_{13}b_{12}-a_{12}b_{13}+b_{13} c_{23}-b_{23}c_{13}=0,\\
a_{13}c_{12}-a_{12}c_{13}+b_{23}c_{12}-b_{12}c_{23}=0.
\end{cases}
\end{equation}
The standard representation of the Clifford algebra $\mathrm{Cl}(3)$ allows us to identify $e^1,e^2,e^3$ with the following endomorphisms of $\Sigma = \mathbb C^2$: 
\[e^1=\begin{pmatrix}0&i\\i&0\end{pmatrix}, \qquad e^2=\begin{pmatrix}0&1\\-1&0\end{pmatrix}, \qquad e^3=\begin{pmatrix}i&0\\0&-i\end{pmatrix}.\]
Using Proposition \ref{prop:formula_Dirac_operator} in the Riemannian case, we compute the Dirac operator acting on a left-invariant spinor $\psi\in\Sigma$:  
\begin{align*}
-4\slashed{D}\psi&=(a_{23}-b_{13}+c_{12})(e^1\wedge e^2\wedge e^3)\psi\\
&\quad-2(b_{12}+c_{13})e^1\psi+2(a_{12}-c_{23})e^2\psi+2(a_{13}+b_{23})e^3\psi.
\end{align*}
The above matrix representations of $e^1,e^2,e^3$ give 
\[-4\slashed{D}\psi=\begin{pmatrix}a_{23}-b_{13}+c_{12}+2i(a_{13}+b_{23})&2(a_{12}-c_{23})-2i(b_{12}+c_{13})\\-2(a_{12}-c_{23})-2i(b_{12}+c_{13})&a_{23}-b_{13}+c_{12}-2i(a_{13}+b_{23})\end{pmatrix}\psi.\]

Note that the determinant of the $2\times 2$ matrix on the right-hand side vanishes if and only if the whole matrix vanishes. 
Hence there are left-invariant harmonic spinors when 
\[b_{23}=-a_{13}, \qquad c_{12}=-a_{23}+b_{13}, \qquad c_{13}=-b_{12}, \qquad c_{23}=a_{12},\] 
and every left-invariant spinor $\psi\in\Sigma$ is harmonic. 
Note also that these relations are solutions to the system of equations \eqref{eq:d^2=0}. 
We have proved the following result.
\begin{proposition}\label{prop:3d_Riemmanian_harmonic}
A three-dimensional Riemannian Lie algebra $(\mathfrak g,g)$ admits left-invariant harmonic spinors if and only if \begin{align*}
\dd e^1&=a_{12}e^1\wedge e^2+a_{13}e^1\wedge e^3+a_{23}e^2\wedge e^3,\\
\dd e^2&=b_{12}e^1\wedge e^2+b_{13}e^1\wedge e^3-a_{13}e^2\wedge e^3,\\
\dd e^3&=(b_{13}-a_{23})e^1\wedge e^2-b_{12}e^1\wedge e^3+a_{12}e^2\wedge e^3,
\end{align*} where $a_{12},a_{13},a_{23},b_{12},b_{13}\in\mathbb R$, and $\{e^1,e^2,e^3\}$ is an orthonormal basis. In this case, the space of left-invariant harmonic spinors is two-dimensional.
\end{proposition}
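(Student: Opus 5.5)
The plan is to apply Proposition~\ref{prop:formula_Dirac_operator} with all $\varepsilon_i=1$ to the structure equations \eqref{eq:str-eq-setup}, and then reduce the question to the vanishing of an explicit $2\times 2$ complex matrix.

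First, I compute the two kinds of terms separately. For the three-form part, $e^1\wedge\dd e^1=a_{23}\,e^{123}$, $e^2\wedge\dd e^2=-b_{13}\,e^{123}$ and $e^3\wedge\dd e^3=c_{12}\,e^{123}$, so $\sum_i e^i\wedge\dd e^i=(a_{23}-b_{13}+c_{12})e^{123}$. For the contraction part, $\sum_i\iota_{e_i}\dd e^i$ equals
\[
a_{12}e^2+a_{13}e^3-b_{12}e^1+b_{23}e^3-c_{13}e^1-c_{23}e^2 .
\]
Substituting into Proposition~\ref{prop:formula_Dirac_operator} gives exactly the displayed expression for $-4\slashed{D}\psi$.

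Next, I insert the given matrices for $e^1,e^2,e^3$. One checks that $e^1e^2e^3$ acts as the identity, with the sign fixed by the chosen representation. This yields a matrix of the form
\[
\begin{pmatrix}\alpha+i\beta & \gamma-i\delta\\ -\gamma-i\delta & \alpha-i\beta\end{pmatrix}
\]
with $\alpha,\beta,\gamma,\delta$ real. Such a matrix is $\alpha\,\mathrm{Id}$ plus a real combination of $i\sigma_3$, $\sigma_2$-type and $i\sigma_1$, i.e.\ a real quaternion. Its determinant is therefore $\alpha^2+\beta^2+\gamma^2+\delta^2$. Hence the kernel is nontrivial if and only if $\alpha=\beta=\gamma=\delta=0$, and in that case the whole matrix vanishes, so the kernel is all of $\Sigma=\mathbb C^2$, which is two-dimensional.

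Setting $\alpha=\beta=\gamma=\delta=0$ gives the linear relations $b_{23}=-a_{13}$, $c_{12}=b_{13}-a_{23}$, $c_{13}=-b_{12}$ and $c_{23}=a_{12}$. Conversely, I have to check that any assignment of the free parameters $a_{12},a_{13},a_{23},b_{12},b_{13}$ with these relations is compatible with $\dd^2=0$, i.e.\ satisfies \eqref{eq:d^2=0}. Without this, the ``if'' direction would not produce a genuine Lie algebra; with it, every such choice of structure equations defines a Riemannian Lie algebra with orthonormal coframe $\{e^i\}$ on which the Dirac operator vanishes on $\Sigma$. Substituting into the first equation gives $a_{23}b_{12}-a_{12}(-a_{13})+a_{23}(-b_{12})-a_{13}a_{12}=0$, and the other two equations cancel similarly.

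The main obstacle is purely computational: keeping the signs straight. This concerns the sign conventions in $\iota_{e_i}$ and the identification of $e^{123}$ with $\pm\mathrm{Id}$ in the chosen representation. An error there would corrupt the relations, so I would double-check by testing on a known example, such as $\mathfrak{su}(2)$ with its bi-invariant metric, where positive scalar curvature forces no harmonic spinors.
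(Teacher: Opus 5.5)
Your proposal is correct and follows essentially the same route as the paper. You apply Proposition~\ref{prop:formula_Dirac_operator} with all $\varepsilon_i=1$, write $-4\slashed{D}$ as an explicit $2\times 2$ matrix in the given representation, and note that the matrix has the form $\left(\begin{smallmatrix} z & w\\ -\bar w & \bar z\end{smallmatrix}\right)$, so its determinant $|z|^2+|w|^2$ vanishes only when the whole matrix does. You then check the resulting linear relations against \eqref{eq:d^2=0}; your quaternion observation simply makes explicit the step that the paper asserts without justification.
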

An equivalent way to express the equations in Proposition \ref{prop:3d_Riemmanian_harmonic} is via the Lie brackets
\begin{equation}
\label{eq:Lie_brackets_harmonic}
\begin{aligned}
[e_1,e_2]&=-a_{12}e_1-b_{12}e_2+(a_{23}-b_{13})e_3,\\
[e_1,e_3]&=-a_{13}e_1-b_{13}e_2+b_{12}e_3,\\
[e_2,e_3]&=-a_{23}e_1+a_{13}e_2-a_{12}e_3.
\end{aligned}
\end{equation}

Now, consider the following matrix constructed using the structure constants of the Lie algebra $\mathfrak g$ in \eqref{eq:Lie_brackets_harmonic}: 
\[C\coloneqq\begin{pmatrix}-a_{12}&-a_{13}&-a_{23}\\-b_{12}&-b_{13}&a_{13}\\a_{23}-b_{13}&b_{12}&-a_{12}\end{pmatrix}.\]
\begin{lemma}
\label{lemma:cmatrix}
Let $(\mathfrak g,g)$ be a three-dimensional Riemannian non-Abelian Lie algebra admitting left-invariant harmonic spinors. 
\begin{enumerate}
\item If $\det(C)\neq0$, then $\mathfrak g\cong\mathfrak{sl}(2,\mathbb R)$.
\item If $\det(C)=0$, then $\mathfrak g\cong\mathfrak{e}(1,1)$.
\end{enumerate}
\end{lemma}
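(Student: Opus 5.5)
The plan is to recognise $C$ as (a column-permuted version of) Milnor's structure endomorphism $L$ and then apply Milnor's classification of three-dimensional unimodular metric Lie algebras \cite{milnor}. Recall that, for an oriented orthonormal basis $\{e_1,e_2,e_3\}$ of a three-dimensional Riemannian Lie algebra, there is a unique endomorphism $L$ with
\[[e_2,e_3]=Le_1,\qquad [e_3,e_1]=Le_2,\qquad [e_1,e_2]=Le_3.\]
Milnor shows that $\mathfrak g$ is unimodular if and only if $L$ is self-adjoint. In that case one can diagonalise $L$ in an orthonormal basis with eigenvalues $\lambda_1,\lambda_2,\lambda_3$. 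The isomorphism type of $\mathfrak g$ is then determined by the signs of the $\lambda_i$, up to an overall sign.

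The first step is to read off $L$ from the brackets \eqref{eq:Lie_brackets_harmonic}. The columns of $C$ are the coordinate vectors of $[e_1,e_2]$, $[e_1,e_3]$ and $[e_2,e_3]$, in that order. The columns of $L$ are therefore the third column of $C$, minus the second column of $C$, and the first column of $C$. Explicitly,
\[L=\begin{pmatrix}-a_{23}&a_{13}&-a_{12}\\ a_{13}&b_{13}&-b_{12}\\ -a_{12}&-b_{12}&a_{23}-b_{13}\end{pmatrix}.\]
Permuting two columns and changing the sign of one leaves the determinant unchanged up to sign, so $\det(L)=\pm\det(C)$. In particular $\det(C)\neq0$ if and only if $\det(L)\neq0$.

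The second step is the key observation about this matrix. By inspection $L$ is symmetric, so $\mathfrak g$ is unimodular; this is consistent with Corollary \ref{cor:almost_Abelian_non_unimodular}. Moreover
\[\mathrm{tr}(L)=-a_{23}+b_{13}+a_{23}-b_{13}=0.\]
Diagonalising $L$ orthonormally gives real eigenvalues with $\lambda_1+\lambda_2+\lambda_3=0$, and they are not all zero because $\mathfrak g$ is non-Abelian.

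The third step is the case split, using Milnor's table.
\begin{enumerate}
\item If $\det(C)\neq0$, all $\lambda_i$ are nonzero. Since they sum to zero they cannot all share a sign, so two have one sign and the third has the opposite sign. By Milnor's table this means $\mathfrak g\cong\mathfrak{sl}(2,\mathbb R)$.
\item If $\det(C)=0$, some eigenvalue vanishes, say $\lambda_3=0$. Then $\lambda_2=-\lambda_1$, and $\lambda_1\neq0$ because otherwise $\mathfrak g$ would be Abelian. The sign pattern $(+,-,0)$ corresponds to $\mathfrak e(1,1)$.
\end{enumerate}

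The only delicate point is the bookkeeping in the first step. One has to get the orientation and sign conventions right, so that the recombined matrix really is symmetric rather than just $C$ itself, which is not symmetric. After that, the argument is a direct invocation of Milnor's classification. As a sanity check I would verify one case directly, for instance taking only $a_{23}\neq0$ and $b_{13}\neq0$, and compare with the brackets of $\mathfrak{sl}(2,\mathbb R)$ and $\mathfrak e(1,1)$ in Table~\ref{table:Riemannian}.
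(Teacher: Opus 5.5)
Your proof is correct, and it takes a genuinely different route from the paper.

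\textbf{The paper's route.} The paper argues by elimination through the list of unimodular algebras:
\begin{enumerate}
\item unimodularity follows from $\dd(\Lambda^2\mathfrak g^*)=0$;
\item nilpotency is ruled out by expanding a few double brackets;
\item $\mathfrak{su}(2)$ and $\mathfrak e(2)$ are excluded by showing $\mathrm{tr}_g(B_{\mathfrak g})>0$;
\item the two survivors are separated via $\dim_{\mathbb R}[\mathfrak g,\mathfrak g]=\mathrm{rank}(C)$.
\end{enumerate}

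\textbf{Your route.} You observe that the four linear conditions of Proposition~\ref{prop:3d_Riemmanian_harmonic} say exactly that Milnor's endomorphism $L$ is self-adjoint and trace-free. Your $L$ is right. In fact the column swap and the sign change cancel, so $\det L=\det C$ exactly.

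This is even an equivalence. For arbitrary structure constants one checks
$-4\slashed{D}\psi=-\mathrm{tr}(L)(e^1\wedge e^2\wedge e^3)\psi+2\bigl((L_{23}-L_{32})e^1+(L_{31}-L_{13})e^2+(L_{12}-L_{21})e^3\bigr)\psi$.
So harmonicity means precisely that $L$ is symmetric and traceless. Milnor's sign table then finishes the job: $\lambda_1\lambda_2\lambda_3=\det L$ and $\lambda_1+\lambda_2+\lambda_3=0$ leave only the patterns $(+,+,-)$ and $(+,-,0)$, up to an overall sign.

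\textbf{Comparison.} Your argument is shorter and more conceptual. The orthonormal eigenbasis with $\sum_i\lambda_i=0$ also makes the rest of the section nearly immediate:
\begin{enumerate}
\item Theorem~\ref{thm:3d_Riemannian_classification}: for instance, $\lambda=\mu+\nu$ is just $\mathrm{tr}(L)=0$ for $L=\frac{2}{\sqrt{\lambda\mu\nu}}\mathrm{diag}(-\lambda,\mu,\nu)$;
\item Lemma~\ref{lemma:Ric=-B_Riemannian}: Milnor's Ricci formulas give $\mathrm{Ric}(e_1,e_1)=2\lambda_2\lambda_3=-B_{\mathfrak g}(e_1,e_1)$ once the eigenvalues sum to zero.
\end{enumerate}
The price is that you use Milnor's classification, i.e.\ the self-adjointness criterion and the sign table, as a black box. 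The paper's proof needs only the list of algebras and elementary invariants such as nilpotency, the Killing form and the derived algebra.
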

\begin{proof}
It is known that a Lie algebra is unimodular if and only if $\dd(\Lambda^{n-1}\mathfrak g^*)=0$, where $n$ is the dimension of $\mathfrak g$. 
Using Proposition \ref{prop:3d_Riemmanian_harmonic} we check that $\dd(\Lambda^2\mathfrak g^*)=0$. 
Hence, the Lie algebra $\mathfrak g$ is unimodular.
    
Assume now that $\mathfrak g$ is nilpotent.  
Since $\mathfrak g$ is three-dimensional, the Lie algebra $\mathfrak g$ has to be two-step nilpotent, namely $[\mathfrak g,[\mathfrak g,\mathfrak g]]=0$. 
From $[e_2,[e_2,e_3]]=0$ and $[e_3,[e_2,e_3]]=0$ we get in particular that $a_{12}^2+a_{23}^2-a_{23}b_{13}=0$ and $a_{13}^2+a_{23}b_{13}=0$. 
Summing both terms we obtain $a_{12}^2+a_{13}^2+a_{23}^2=0$, thus $a_{12}=a_{13}=a_{23}=0$. 
Now we have 
\[0=[e_1,[e_1,e_2]]=(b_{12}^2+b_{13}^2)e_2,\] 
so $b_{12}=b_{13}=0$ and the Lie algebra $\mathfrak g$ is Abelian, but we are neglecting this case. 
Therefore, a non-Abelian three-dimensional Riemannian Lie algebra $(\mathfrak g,g)$ admitting left-invariant harmonic spinors must be unimodular and non-nilpotent. 
Then it can be $\mathfrak{su}(2)$, $\mathfrak{sl}(2,\mathbb R)$, $\mathfrak{e}(2)$, or $\mathfrak{e}(1,1)$.
    
The Killing form $B_\mathfrak g$ of the Lie algebra $\mathfrak g$ is 
\begin{equation*}
\label{eq:Killing_Riemannian}
B_\mathfrak g=
2\left(\begin{smallmatrix}
b_{12}^{2} - a_{23} b_{13} + b_{13}^{2} & a_{13} a_{23} - a_{12} b_{12} - a_{13} b_{13} & a_{13} b_{12} - a_{12} b_{13} \\
a_{13} a_{23} - a_{12} b_{12} - a_{13} b_{13} & a_{12}^{2} + a_{23}^{2} - a_{23} b_{13} & a_{12} a_{13} + a_{23} b_{12} \\
a_{13} b_{12} - a_{12} b_{13} & a_{12} a_{13} + a_{23} b_{12} & a_{13}^{2} + a_{23} b_{13}
\end{smallmatrix}\right).
\end{equation*}
The trace of $B_\mathfrak g$ is 
\[\mathrm{tr}_g(B_\mathfrak g)=2(a_{12}^{2}+a_{13}^{2}+a_{23}^{2}-a_{23}b_{13}+b_{12}^{2}+b_{13}^{2}).\]
Since $-a_{23}b_{13}\geq-\frac{1}{2}(a_{23}^2+b_{13}^2)$ we get 
\[\frac12\mathrm{tr}_g(B_\mathfrak g)\geq a_{12}^{2}+a_{13}^{2}+b_{12}^{2}+\frac{1}{2}a_{23}^{2}+\frac{1}{2}b_{13}^{2}>0,\]
as $\mathfrak g$ is non-Abelian.
Hence, $\mathrm{tr}_g(B_\mathfrak g)>0$, which rules out $\mathfrak{su}(2)$ and $\mathfrak{e}(2)$.

Finally, note that $\dim_{\mathbb R}([\mathfrak g,\mathfrak g])=\mathrm{rank}(C)$. 
If $\det(C)\neq0$, then $\dim_{\mathbb R}([\mathfrak g,\mathfrak g])=3$, which implies that $\mathfrak g$ is simple, so it is $\mathfrak{sl}(2,\mathbb R)$. 
If $\det(C)=0$, then $\dim_{\mathbb R}([\mathfrak g,\mathfrak g])<3$, thus $\mathfrak g$ is $\mathfrak{e}(1,1)$.
\end{proof}
We have identified those three-dimensional Lie algebras admitting left-invariant harmonic spinors. 
We now detect the Riemannian metrics on these Lie algebras carrying left-invariant harmonic spinors. 
First we obtain the following restriction on the Ricci curvature of such metrics. 
\begin{lemma}
\label{lemma:Ric=-B_Riemannian}
Let $(\mathfrak g,g)$ be a three-dimensional Riemannian non-Abelian Lie algebra admitting left-invariant harmonic spinors. 
Then the Ricci curvature of $g$ satisfies $\mathrm{Ric}(g)=-B_\mathfrak g$.
In particular, if $\sigma(\mathfrak g)=(p,q,r)$, then $\sigma(\mathrm{Ric}(g))=(q,p,r)$.
\end{lemma}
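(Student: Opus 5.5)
The plan is a direct computation in the orthonormal basis $\{e_1,e_2,e_3\}$ of Proposition \ref{prop:3d_Riemmanian_harmonic}, with the signature statement then following from Sylvester's law of inertia.

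First, I would use the standard formula for the Ricci tensor of a left-invariant metric on a unimodular Lie algebra. Lemma \ref{lemma:cmatrix} already shows that $\mathfrak g$ is unimodular, so the term involving the mean curvature vector drops out and
\[\mathrm{Ric}(x,y)=-\frac12\sum_{i}g([x,e_i],[y,e_i])-\frac12 B_{\mathfrak g}(x,y)+\frac14\sum_{i,j}g([e_i,e_j],x)\,g([e_i,e_j],y).\]
Hence $\mathrm{Ric}=-B_{\mathfrak g}$ is equivalent to
\[\frac12 B_{\mathfrak g}(x,y)=\frac12\sum_{i}g([x,e_i],[y,e_i])-\frac14\sum_{i,j}g([e_i,e_j],x)\,g([e_i,e_j],y).\]
Both sides are quadratic in the five parameters $a_{12},a_{13},a_{23},b_{12},b_{13}$.

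Second, I would encode the brackets \eqref{eq:Lie_brackets_harmonic} in matrices. In dimension three one may write $[e_i,e_j]=L(e_i\times e_j)$ for a matrix $L$, whose columns are read off from $C$ up to reordering and signs. The identity $\mathrm{tr}\,\mathrm{ad}=0$ forces $L$ to be symmetric, as Milnor observed. The harmonicity conditions of Proposition \ref{prop:3d_Riemmanian_harmonic} should then translate into one extra linear constraint on $L$. The natural guess is $\mathrm{tr}(L)=0$, because the $e^1\wedge e^2\wedge e^3$ coefficient $a_{23}-b_{13}+c_{12}$ in the Dirac operator equals $\mathrm{tr}$ of $L$ up to sign.

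In Milnor's frame, where $L=\mathrm{diag}(\lambda_1,\lambda_2,\lambda_3)$, the Ricci eigenvalues are $\tfrac12\mu^2-\lambda_j\mu$ with $\mu$-notation as in Milnor, i.e.\ $r_1=2\mu_2\mu_3$ with $\mu_i=\tfrac12(\lambda_1+\lambda_2+\lambda_3)-\lambda_i$. The Killing form is $B=\mathrm{diag}(-2\lambda_2\lambda_3,\dots)$. When $\sum\lambda_i=0$ we get $\mu_i=-\lambda_i$, so $r_1=2\lambda_2\lambda_3=-B_{11}$, and likewise for the other indices. This gives the identity once the harmonicity conditions are shown to be exactly $\mathrm{tr}(L)=0$ with $L$ symmetric.

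The main obstacle is the bookkeeping of that translation. If it turns out messy, I would instead verify the identity brute force. I would compute the Levi-Civita connection via the Koszul formula in the proof of Proposition \ref{prop:formula_Dirac_operator}, then $\mathrm{Ric}_{kl}$, and compare entrywise with the displayed matrix $B_{\mathfrak g}$. The relations from \eqref{eq:d^2=0} may be needed along the way, though they hold automatically here.

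Finally, Sylvester's law of inertia gives the signature claim. The Killing form signature $\sigma(\mathfrak g)$ is basis-independent, and negating a form swaps its positive and negative counts. Hence $\sigma(\mathrm{Ric}(g))=(q,p,r)$.
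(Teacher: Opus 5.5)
Your proof is correct, but your main route differs from the paper's. The paper plugs the brackets \eqref{eq:Lie_brackets_harmonic} directly into the Conti--Rossi formula for $\mathrm{Ric}(g)(x,x)$ (with the $\varepsilon_i$ signs and the term $-g([z,x],x)$). It notes $z=0$ and compares with the displayed $B_{\mathfrak g}$, which is essentially your fallback.

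Your Milnor-frame argument replaces this computation with a structural observation, and the step you flag as the obstacle is immediate. Set $Le_1=[e_2,e_3]$, $Le_2=[e_3,e_1]$, $Le_3=[e_1,e_2]$; the brackets then give
\[L=\begin{pmatrix}-a_{23}&a_{13}&-a_{12}\\ a_{13}&b_{13}&-b_{12}\\ -a_{12}&-b_{12}&a_{23}-b_{13}\end{pmatrix},\]
which is visibly symmetric and traceless. In general, $a_{23}-b_{13}+c_{12}=-\mathrm{tr}L$, while the three vector coefficients of $-4\slashed{D}$ are $\pm\mathrm{tr}\,\mathrm{ad}(e_i)$, i.e.\ they encode the skew part of $L$.

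Since $\mathrm{Ric}(g)$ and $B_{\mathfrak g}$ are tensors, and $\mathrm{tr}L=0$ survives rotations and orientation reversal ($L\mapsto -L$), you may pass to Milnor's eigenframe. There $\mu_i=-\lambda_i$, so $r_i=2\lambda_j\lambda_k=-B_{\mathfrak g}(e_i,e_i)$ for $\{i,j,k\}=\{1,2,3\}$, with both forms diagonal. Drop the phrase ``$\tfrac12\mu^2-\lambda_j\mu$'', which is not Milnor's formula; you only use $r_1=2\mu_2\mu_3$, which is correct.

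Your route is nearly computation-free and recovers Lemma~\ref{lemma:cmatrix} at the same time. The condition $\lambda_1+\lambda_2+\lambda_3=0$ with $\lambda\neq0$ excludes Milnor's sign patterns for $\mathfrak{su}(2)$, $\mathfrak e(2)$ and $\mathfrak{heis}_3$, and gives $\mathrm{Scal}(g)=-\sum_i\lambda_i^2<0$. The paper's route, by contrast, needs no diagonalization. It therefore transfers verbatim to the Lorentzian Lemma~\ref{lemma:Ric=-B_Lorentzian_uni}, where $L$ is only self-adjoint for an indefinite metric and need not be diagonalizable in an orthonormal frame.
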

\begin{proof}
Let $\{e_1,\ldots,e_n\}$ be an orthonormal basis of a pseudo-Riemannian Lie algebra $(\mathfrak{g},g)$ of dimension $n$. Denote by $z$ the vector on $\mathfrak{g}$ defined by $g(z,{}\cdot{})=\mathrm{tr}(\mathrm{ad}({}\cdot{}))$. Then the following formula for the Ricci curvature $\mathrm{Ric}(g)$ of $g$ holds (see e.g.\ \cite[Lemma 1.1 and Remark 1.3]{conti-rossi}): \begin{align*}
    \mathrm{Ric}(g)(x,x)&=-\frac{1}{2}\sum_{i=1}^n\varepsilon_ig([x,e_i],[x,e_i])+\frac{1}{2}\sum_{i<j}\varepsilon_i\varepsilon_jg([e_i,e_j],x)^2\\
    &\quad-\frac{1}{2}B_\mathfrak{g}(x,x)-g([z,x],x),
\end{align*} for all $x\in\mathfrak{g}$.
In our case, a straightforward computation using \eqref{eq:Lie_brackets_harmonic} shows that $z=0$ and $\mathrm{Ric}(g)=-B_{\mathfrak{g}}$.
\end{proof}

We recall the following classification results.
\begin{theorem}[Ha--Lee \cite{ha-lee1}]
Let $\mathfrak{sl}(2,\mathbb R)$ be spanned by $\{x,y,z\}$ satisfying 
\[[x,y]=2z, \qquad [z,x]=2y, \qquad [z,y]=2x.\]
Any Riemannian metric on $\mathfrak{sl}(2,\mathbb R)$ is equivalent to a metric whose associated matrix with respect to the basis $\{x,y,z\}$ is of the form 
\[g(\lambda,\mu,\nu)=\begin{pmatrix}\lambda&0&0\\0&\mu&0\\0&0&\nu\end{pmatrix}, \qquad \lambda>0, \quad \mu\geq\nu>0.\] 
Further, the signature of the corresponding Ricci tensors is 
\[\sigma(\mathrm{Ric}(g(\lambda,\mu,\nu)))=
\begin{cases}
(0,1,2)&\mbox{ if }\lambda=\mu-\nu,\\
(1,2,0)&\mbox{ if }\lambda\neq\mu-\nu.
\end{cases}
\]
\end{theorem}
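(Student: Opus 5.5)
Since this is a classification up to $\mathrm{Aut}(\mathfrak{sl}(2,\mathbb R))$, I would use the Killing form $B=B_{\mathfrak g}$ as the automorphism-invariant reference form and reduce the problem to simultaneous diagonalisation of $g$ and $B$.

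\emph{Compute $B$.} A direct computation from the brackets gives
\[
\mathrm{ad}(x)^2=-4\,\mathrm{Id} \text{ on } \mathrm{Span}\{y,z\},\qquad
\mathrm{ad}(y)^2=\mathrm{ad}(z)^2=4\,\mathrm{Id} \text{ on the complementary planes}.
\]
Hence $B=\mathrm{diag}(-8,8,8)$ in the basis $\{x,y,z\}$. This is consistent with $\sigma(\mathfrak g)=(2,1,0)$ in Table \ref{table:three-dim-lie-algebras-uni}, and $x$ is the $B$-timelike direction.

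\emph{Describe the automorphisms.} $\mathrm{Aut}(\mathfrak{sl}(2,\mathbb R))\cong\mathrm{PGL}(2,\mathbb R)$ acts through the adjoint representation and preserves $B$. I will check that its image is exactly $\mathrm{SO}(B)\cong\mathrm{SO}(2,1)$, including both connected components. One can also verify directly that any $A\in\mathrm{SO}(B)$ preserves the brackets: for a basis that is $B$-orthogonal with the same norms as $\{x,y,z\}$, the bracket is determined by $B$ and the orientation, via a $B$-cross product.

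\emph{Reduce $g$ to diagonal form.} Let $g$ be positive definite. The endomorphism $g^{-1}B$ is $g$-self-adjoint, so there is a $g$-orthogonal eigenbasis, which is then also $B$-orthogonal. Since $B$ has signature $(2,1)$, exactly one eigenvalue is negative. Rescale the eigenvectors so that $B$ takes the values $-8,8,8$ on them. Fix orientation by flipping the sign of one vector if necessary; this does not change a diagonal $g$. The resulting basis is then the image of $\{x,y,z\}$ under some $A\in\mathrm{SO}(B)\subset\mathrm{Aut}$. In this basis $g=\mathrm{diag}(\lambda,\mu,\nu)$ with all entries positive. Finally, the rotation by $\pi/2$ in the $B$-spacelike plane $\mathrm{Span}\{y,z\}$ lies in $\mathrm{SO}(B)$ and swaps $\mu$ and $\nu$, so we may assume $\mu\geq\nu$.

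\emph{Show the normal forms are distinct.} The eigenvalues of $g^{-1}B$, namely $-8/\lambda,\ 8/\mu,\ 8/\nu$, are invariants under equivalence. So distinct admissible triples $(\lambda,\mu,\nu)$ are inequivalent.

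\emph{Ricci signature.} Take the orthonormal frame
\[
e_1=x/\sqrt\lambda,\qquad e_2=y/\sqrt\mu,\qquad e_3=z/\sqrt\nu .
\]
This is a Milnor frame with $[e_2,e_3]=\lambda_1e_1$ and cyclic analogues, up to signs; the structure constants are proportional to $\lambda,\mu,-\nu$ divided by $\sqrt{\lambda\mu\nu}$ times $2$. Milnor's formula for unimodular groups gives a diagonal Ricci tensor with principal values
\[
\mathrm{Ric}(e_i,e_i)=2\mu_j\mu_k,\qquad \mu_i=\tfrac12(\lambda_1+\lambda_2+\lambda_3)-\lambda_i .
\]
Alternatively, one can use the formula in Lemma \ref{lemma:Ric=-B_Riemannian}'s proof with $z=0$.

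These principal values are, up to a positive factor, products such as $(\lambda-\mu+\nu)(\lambda+\mu-\nu)$ and $(\mu+\nu-\lambda)(\cdots)$. A sign analysis should then give the two cases. If $\lambda=\mu-\nu$, some $\mu_i$ vanishes: two principal curvatures are $0$ and one is negative, giving signature $(0,1,2)$. Otherwise, using $\mu\ge\nu$, there is exactly one positive and two negative values, giving $(1,2,0)$.

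The main obstacle I expect is carrying out this sign bookkeeping carefully. Getting the signs of the $\lambda_i$ right for this bracket convention is error-prone, and I would confirm the degenerate case against Lemma \ref{lemma:Ric=-B_Riemannian} as a check.
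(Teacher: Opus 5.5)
Your reduction to the normal form is correct, and it is your own route: the paper gives no proof of this statement. It quotes Ha--Lee and only corrects their degenerate locus from $\lambda=\mu+\nu$ to $\lambda=\mu-\nu$ after inspecting their proof. Your ingredients are $B=\mathrm{diag}(-8,8,8)$, the identification $\mathrm{Aut}(\mathfrak{sl}(2,\mathbb R))=\mathrm{SO}(B)$ via the $B$-cross product, and simultaneous diagonalisation of $g$ and $B$. This argument is self-contained. As a bonus, the spectrum $\{-8/\lambda,8/\mu,8/\nu\}$ of $g^{-1}B$ shows that the normal forms are pairwise inequivalent.

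The gap is in the Ricci part, exactly at the point the paper flags. Your sign pattern for the structure constants is wrong. In Milnor's formula the relative signs carry all the content, so ``up to signs'' leaves the crux open. From $[y,z]=-2x$, $[z,x]=2y$, $[x,y]=2z$ and $s=\sqrt{\lambda\mu\nu}$ one gets
\[
[e_2,e_3]=-\tfrac{2\lambda}{s}e_1,\qquad [e_3,e_1]=\tfrac{2\mu}{s}e_2,\qquad [e_1,e_2]=\tfrac{2\nu}{s}e_3 .
\]
So $(\lambda_1,\lambda_2,\lambda_3)=\tfrac{2}{s}(-\lambda,\mu,\nu)$, and the odd sign sits on the $B$-timelike vector $x$, not on $z$.

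With your pattern $\tfrac{2}{s}(\lambda,\mu,-\nu)$ one finds $s\mu_1=\mu-\lambda-\nu$ and $s\mu_2=\lambda-\mu-\nu$. Then some $\mu_i$ vanishes both at $\lambda=\mu-\nu$ and at $\lambda=\mu+\nu$, and you would get signature $(0,1,2)$ on both loci. The factor $\mu+\nu-\lambda$ you mention comes from this wrong pattern. That contradicts Lemma~\ref{lemma:Ric=-B_Riemannian} and Theorem~\ref{thm:3d_Riemannian_classification}, where $\lambda=\mu+\nu$ gives $\mathrm{Ric}=-B$ of signature $(1,2,0)$.

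With the correct constants, $s\mu_1=\lambda+\mu+\nu$, $s\mu_2=-(\lambda+\mu-\nu)$ and $s\mu_3=-(\lambda-\mu+\nu)$. Hence
\begin{gather*}
\mathrm{Ric}(e_1,e_1)=\tfrac{2}{s^2}(\lambda+\mu-\nu)(\lambda-\mu+\nu),\\
\mathrm{Ric}(e_2,e_2)=-\tfrac{2}{s^2}(\lambda+\mu+\nu)(\lambda-\mu+\nu),\qquad
\mathrm{Ric}(e_3,e_3)=-\tfrac{2}{s^2}(\lambda+\mu+\nu)(\lambda+\mu-\nu).
\end{gather*}
Since $\mu\geq\nu$, the third value is always negative. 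The first two have opposite signs and vanish together exactly when $\lambda=\mu-\nu$. This gives $(1,2,0)$ in general and $(0,1,2)$ on that locus, as claimed. As a check, at $\lambda=\mu+\nu$ the three values are $8/\lambda,-8/\mu,-8/\nu$, i.e.\ $\mathrm{Ric}=-B$.
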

\begin{remark}
The condition $\lambda=\mu+\nu$ in \cite[Thm.\ 4.8]{ha-lee1} seems to be a typo. 
Inspecting the proof of that theorem, we observe that the correct condition is $\lambda=\mu-\nu$.
\end{remark}
\begin{theorem}[Ha--Lee \cite{ha-lee1}]
Let $\mathfrak{e}(1,1)$ be spanned by $\{x,y,z\}$ satisfying 
\[[x,y]=0, \qquad [z,x]=x, \qquad [z,y]=-y.\] 
Any Riemannian metric on $\mathfrak{e}(1,1)$ is equivalent to a metric whose associated matrix with respect to the basis $\{x,y,z\}$ is of the form 
\[g(\nu)=\begin{pmatrix}1&0&0\\0&1&0\\0&0&\nu\end{pmatrix},\qquad g(\mu,\nu)=\begin{pmatrix}1&1&0\\1&\mu&0\\0&0&\nu\end{pmatrix},\]
with $\nu>0$, $\mu>1$. 
Further, the signature of the corresponding Ricci tensors is 
\[\sigma(\mathrm{Ric}(g(\nu)))=(0,1,2),\qquad \sigma(\mathrm{Ric}(g(\mu,\nu)))=(1,2,0).\]
\end{theorem}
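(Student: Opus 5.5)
The plan is to reprove the classification directly from the automorphism group of $\mathfrak{e}(1,1)$. Afterwards we compute the Ricci signatures using the formula quoted in the proof of Lemma~\ref{lemma:Ric=-B_Riemannian}.

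\textbf{Step 1: the automorphism group.} Any automorphism preserves the derived algebra $[\mathfrak g,\mathfrak g]=\mathrm{Span}_{\mathbb R}\{x,y\}$. It therefore sends $z$ to $\pm z+cx+dy$. Since $\mathrm{ad}(z)|_{\mathrm{Span}\{x,y\}}=\mathrm{diag}(1,-1)$ must be conjugated to $\pm\mathrm{ad}(z)$, the automorphisms with respect to $\{x,y,z\}$ are of two kinds:
\[
x\mapsto ax,\quad y\mapsto by,\quad z\mapsto z+cx+dy,
\qquad\text{or}\qquad
x\mapsto ay,\quad y\mapsto bx,\quad z\mapsto -z+cx+dy,
\]
with $ab\neq0$.

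\textbf{Step 2: normal forms.} Let $g$ be a Riemannian metric. First I use the parameters $c,d$ to replace $z$ by its $g$-orthogonal projection onto $\mathrm{Span}\{x,y\}^\perp$. This is possible because $g$ is positive definite on $\mathrm{Span}\{x,y\}$. Then $g(z,x)=g(z,y)=0$ and $g(z,z)=\nu>0$. Next I write $g|_{\mathrm{Span}\{x,y\}}=\left(\begin{smallmatrix}p&r\\ r&s\end{smallmatrix}\right)$ with $p,s>0$ and $ps>r^2$. The rescalings $x\mapsto ax$, $y\mapsto by$ turn this into $\left(\begin{smallmatrix}a^2p&abr\\ abr&b^2s\end{smallmatrix}\right)$.
\begin{enumerate}
\item If $r=0$, take $a=p^{-1/2}$ and $b=s^{-1/2}$. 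This gives $g(\nu)$.
\item If $r\neq0$, take $a=p^{-1/2}$ and $b=\sqrt p/r$. This gives $g(x,x)=g(x,y)=1$ and $g(y,y)=\mu\coloneqq ps/r^2>1$, i.e.\ the metric $g(\mu,\nu)$.
\end{enumerate}

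\textbf{Step 3: non-equivalence.} The quantities $\nu$ and $\mu$ are invariants of the orbit. Indeed, $g(z',z')$ is invariant for the $\mathrm{Span}\{x,y\}$-orthogonal representative $z'$ of $\pm z$ modulo the ideal. Likewise, $r^2/(ps)$ is invariant under rescaling and under swapping $x$ and $y$. Hence distinct parameter values give inequivalent metrics.

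\textbf{Step 4: Ricci signatures.} I would take a $g$-orthonormal basis $e_1,e_2$ of $\mathrm{Span}\{x,y\}$ and set $e_3=z/\sqrt\nu$. Then $\mathrm{ad}(e_3)|_{\mathrm{Span}\{x,y\}}=\nu^{-1/2}D$, where $D$ has eigenvalues $\pm1$ but is not $g$-symmetric in case (2).

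Since $\mathfrak g$ is unimodular, $z=0$ in the Ricci formula from the proof of Lemma~\ref{lemma:Ric=-B_Riemannian}, which is then evaluated on this frame. For $g(\nu)$, $D$ is $g$-symmetric, and the Ricci tensor is diagonal: $\mathrm{Ric}(e_1,e_1)=\mathrm{Ric}(e_2,e_2)=0$ and $\mathrm{Ric}(e_3,e_3)=-2/\nu$. This gives signature $(0,1,2)$. Equivalently, this is the case where the brackets fit \eqref{eq:Lie_brackets_harmonic}, so $\mathrm{Ric}=-B_{\mathfrak g}$. For $g(\mu,\nu)$, I would decompose $D=S+A$ into its $g$-symmetric and skew parts, with $A\neq0$. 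Alternatively, I would pass to a Milnor frame, in which $[e_2,e_3]=\lambda_1e_1$, $[e_3,e_1]=\lambda_2e_2$, $[e_1,e_2]=0$ with $\lambda_1\lambda_2<0$ and $|\lambda_1|\neq|\lambda_2|$. Milnor's formulas then give Ricci eigenvalues
\[
\tfrac12(\lambda_1^2-\lambda_2^2),\qquad \tfrac12(\lambda_2^2-\lambda_1^2),\qquad -\tfrac12(\lambda_1-\lambda_2)^2,
\]
which have signature $(1,2,0)$.

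The main obstacle is exactly this last identification. One has to check that $\mu>1$ forces $|\lambda_1|\neq|\lambda_2|$, i.e.\ that $D$ is not $g$-symmetric precisely when $r\neq0$. I would verify this by expressing $\lambda_1\lambda_2$ and $\lambda_1^2+\lambda_2^2$ in terms of $\operatorname{tr}(D^*D)$ and $\det D$ computed in the basis of Step~2.
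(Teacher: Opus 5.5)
The paper does not prove this statement; it is quoted from Ha--Lee. The only part you can recover from the paper is the signature $(0,1,2)$ for $g(\nu)$. In the proof of Theorem~\ref{thm:3d_Riemannian_classification}, the frame $e_1=(x+y)/\sqrt2$, $e_2=(y-x)/\sqrt2$, $e_3=z/\sqrt\nu$ realises \eqref{eq:Lie_brackets_harmonic}, so Lemma~\ref{lemma:Ric=-B_Riemannian} gives $\mathrm{Ric}=-B_{\mathfrak g}$. That is exactly your ``equivalently'' remark.

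Your proposal is a correct, self-contained reproof by a different route. The explicit automorphism group gives the normal forms in a few lines. It also yields the orbit invariants $\nu$ and $g(x,y)^2/(g(x,x)g(y,y))=1/\mu$, which go beyond what the statement asks. Milnor frames then handle $g(\mu,\nu)$. There the paper's $\mathrm{Ric}=-B_{\mathfrak g}$ shortcut is unavailable, precisely because $g(\mu,\nu)$ carries no left-invariant harmonic spinors, so a direct curvature computation like yours is needed.

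Two small points.

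\textbf{Step 1.} Preserving $[\mathfrak g,\mathfrak g]$ only gives $z\mapsto\alpha z+cx+dy$ with $\alpha\neq0$. The value $\alpha=\pm1$ comes from comparing the spectrum of $\mathrm{ad}(z)|_{\mathrm{Span}_{\mathbb R}\{x,y\}}$ with that of $\alpha\,\mathrm{ad}(z)|_{\mathrm{Span}_{\mathbb R}\{x,y\}}$. Your next sentence indicates this, so the word ``therefore'' is just out of order.

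\textbf{Step 4.} The obstacle you flag is immediate. Qualitatively, $\mathrm{diag}(1,-1)$ has distinct eigenvalues with eigenvectors $x,y$, so it is $g$-self-adjoint if and only if $g(x,y)=0$. Quantitatively, in a Milnor frame $[\mathfrak g,\mathfrak g]=\mathrm{Span}_{\mathbb R}\{e_1,e_2\}$, so $e_3=\pm z/\sqrt\nu$ in your normal form. Hence
$\lambda_1\lambda_2=\det(\nu^{-1/2}D)=-1/\nu$.
Also, for $g(\mu,\nu)$,
$\lambda_1^2+\lambda_2^2=\nu^{-1}\mathrm{tr}(D^*D)=\frac{2(\mu+1)}{\nu(\mu-1)}$.
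It follows that
$(|\lambda_1|-|\lambda_2|)^2=\frac{4}{\nu(\mu-1)}>0$,
and Milnor's eigenvalues give signature $(1,2,0)$ as you claim.
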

The following is the main result of this section, cf.\ Table \ref{table:Riemannian}.
\begin{theorem}
\label{thm:3d_Riemannian_classification}
Let $(\mathfrak g,g)$ be a three-dimensional Riemannian non-Abelian Lie algebra admitting left-invariant harmonic spinors. Then
\begin{enumerate}
\item $\mathfrak g\cong\mathfrak{sl}(2,\mathbb R)$ and $g$ is equivalent to $g(\mu+\nu,\mu,\nu)$, $\mu\geq\nu>0$, or
\item $\mathfrak g\cong\mathfrak{e}(1,1)$ and $g$ is equivalent to $g(\nu)$, $\nu>0$.
\end{enumerate}
\end{theorem}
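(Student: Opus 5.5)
The plan is to combine Lemma \ref{lemma:cmatrix}, which identifies the Lie algebra, with the two Ha--Lee normal forms. Then, for each normal form, I test the criterion of Proposition \ref{prop:3d_Riemmanian_harmonic} directly. That criterion says: in \emph{some} orthonormal basis $\{e_1,e_2,e_3\}$, the brackets must have exactly the shape \eqref{eq:Lie_brackets_harmonic}. Lemma \ref{lemma:Ric=-B_Riemannian} serves as a quick filter before the explicit check. By Lemma \ref{lemma:cmatrix}, $\mathfrak g$ is either $\mathfrak{sl}(2,\mathbb R)$ or $\mathfrak e(1,1)$. Since existence of left-invariant harmonic spinors is invariant under automorphisms, I may replace $g$ by its Ha--Lee representative.

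\emph{The case $\mathfrak e(1,1)$.} Here $\sigma(\mathfrak g)=(1,0,2)$, so Lemma \ref{lemma:Ric=-B_Riemannian} forces $\sigma(\mathrm{Ric}(g))=(0,1,2)$. This excludes the family $g(\mu,\nu)$, whose Ricci signature is $(1,2,0)$, and leaves only $g(\nu)$. For completeness I then check that $g(\nu)$ really carries harmonic spinors. Take the orthonormal basis $e_1=x$, $e_2=y$, $e_3=z/\sqrt\nu$; its brackets are $[e_1,e_3]=-\nu^{-1/2}e_1$ and $[e_2,e_3]=\nu^{-1/2}e_2$. One sees these fit \eqref{eq:Lie_brackets_harmonic} with $a_{13}=\nu^{-1/2}$ and all other parameters zero, so $g(\nu)$ admits harmonic spinors for every $\nu>0$.

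\emph{The case $\mathfrak{sl}(2,\mathbb R)$.} Here the signature filter is weak. We have $\sigma(\mathfrak g)=(2,1,0)$, so Lemma \ref{lemma:Ric=-B_Riemannian} only excludes $\lambda=\mu-\nu$. I therefore work with the diagonal metric $g(\lambda,\mu,\nu)$ and its orthonormal basis $x/\sqrt\lambda$, $y/\sqrt\mu$, $z/\sqrt\nu$. In this basis every bracket of two basis vectors is a multiple of the third, with coefficients $\pm2\sqrt{\cdot}/\sqrt{\lambda\mu\nu}$ times $\lambda,\mu,\nu$. Matching with \eqref{eq:Lie_brackets_harmonic} forces $a_{12}=a_{13}=b_{12}=0$. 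It remains to compare $a_{23}$, $b_{13}$ and $a_{23}-b_{13}$; this gives one linear relation among $\lambda,\mu,\nu$.

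The main obstacle is a subtlety in this matching. The condition of Proposition \ref{prop:3d_Riemmanian_harmonic} is not invariant under arbitrary orthonormal changes of frame: the identification of $e^1,e^2,e^3$ with Clifford generators fixes an orientation. So one must range over the admissible choices of signs and permutations of the diagonal frame. (Non-diagonal orthonormal frames are handled automatically, because the vanishing of $\slashed D$ on $\Sigma$ is frame-independent.) One choice gives the impossible relation $\nu=-(\lambda+\mu)$. Reversing one axis, say $e_3\mapsto -e_3$, gives the relation $\lambda=\mu+\nu$, which is the one in the statement.

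As a cross-check, I would instead compute $\mathrm{Ric}(g(\lambda,\mu,\nu))$ with the formula from the proof of Lemma \ref{lemma:Ric=-B_Riemannian}, where $z=0$. The Killing form is $B=\mathrm{diag}(8,-8,8)$ in the basis $\{x,y,z\}$. Imposing $\mathrm{Ric}=-B$ entrywise should single out the same relation $\lambda=\mu+\nu$. This guards against a sign slip in the frame matching.
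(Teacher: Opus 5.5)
Your route is the paper's: Lemma \ref{lemma:cmatrix}, then the Ricci-signature filter, then the Ha--Lee normal forms, then matching a diagonal orthonormal frame against \eqref{eq:Lie_brackets_harmonic}. Your $\mathfrak e(1,1)$ part is correct, and your frame $x,y,z/\sqrt\nu$ is slightly simpler than the paper's rotated one.

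The gap is in the $\mathfrak{sl}(2,\mathbb R)$ matching, exactly at the step you call the main obstacle. Its premise is false. The criterion of Proposition \ref{prop:3d_Riemmanian_harmonic} is invariant under all orthonormal frame changes, orientation-reversing ones included. Three of its linear conditions say that $\mathrm{tr}\circ\mathrm{ad}=\sum_i\iota_{e_i}\dd e^i$ vanishes. The fourth says that the frame-independent $3$-form $\sum_i e^i\wedge\dd e^i=(a_{23}-b_{13}+c_{12})\,e^1\wedge e^2\wedge e^3$ vanishes; under $e_3\mapsto-e_3$ this coefficient only changes sign.

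Concretely, set $s=\sqrt{\lambda\mu\nu}$ and take the frame $e_1=x/\sqrt\lambda$, $e_2=y/\sqrt\mu$, $e_3=z/\sqrt\nu$. Then $[e_1,e_2]=2\nu s^{-1}e_3$, $[e_1,e_3]=-2\mu s^{-1}e_2$ and $[e_2,e_3]=-2\lambda s^{-1}e_1$. So $a_{23}=2\lambda/s$ and $b_{13}=2\mu/s$, and $a_{23}-b_{13}=2\nu/s$ gives $\lambda=\mu+\nu$ immediately. Reversing $e_3$, or any signed permutation of the frame, flips signs consistently and returns the same relation.

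Hence $\nu=-(\lambda+\mu)$ can only come from a sign slip, for instance taking $[y,z]=+2x$ instead of $-[z,y]=-2x$. Flipping an axis cannot turn that into $\lambda=\mu+\nu$: done consistently, both frames reproduce the same wrong relation. As written, your derivation reaches the right relation through an invalid step. The fix is simply to recompute the brackets.

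The cross-check contains an analogous error. In the basis $\{x,y,z\}$ one has $B=\mathrm{diag}(-8,8,8)$, not $\mathrm{diag}(8,-8,8)$: $\mathrm{ad}(x)^2=-4$ on $\mathrm{Span}\{y,z\}$, while $\mathrm{ad}(y)^2$ and $\mathrm{ad}(z)^2$ equal $+4$ on the complementary planes.

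With the correct $B$, the $xx$-entry of $\mathrm{Ric}=-B$ reads $2(\lambda^2-(\mu-\nu)^2)/(\mu\nu)=8$, i.e.\ $\lambda=\mu+\nu$, and the $yy$- and $zz$-entries then hold automatically. This alone already gives the necessity direction the theorem asserts, and the frame computation supplies sufficiency. With your $\mathrm{diag}(8,-8,8)$, the three diagonal equations have no solution with $\lambda,\mu,\nu>0$, so the cross-check would not confirm anything.
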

\begin{proof}
The two cases follow by Lemma \ref{lemma:cmatrix}, so we only need to detect the metrics carrying left-invariant harmonic spinors.

Suppose $\mathfrak g \cong \mathfrak{sl}(2,\mathbb R)$. 
By Lemma \ref{lemma:Ric=-B_Riemannian}, the Ricci curvature $\mathrm{Ric}(g)$ has signature $(1,2,0)$ (cf.\ Table \ref{table:three-dim-lie-algebras-uni}). 
Take the Lie algebra $\mathfrak{sl}(2,\mathbb R)$ with basis $\{x,y,z\}$ and metric $g(\lambda,\mu,\nu)$, and write $e_1=x/\sqrt{\lambda}$, $e_2=y/\sqrt{\mu}$, and $e_3=z/\sqrt{\nu}$. 
Then $\{e_1,e_2,e_3\}$ is an orthonormal basis and
\[[e_1,e_2]=\frac{2\nu}{\sqrt{\lambda\mu\nu}}e_3,\qquad [e_1,e_3]=-\frac{2\mu}{\sqrt{\lambda\mu\nu}}e_2,\qquad [e_2,e_3]=-\frac{2\lambda}{\sqrt{\lambda\mu\nu}}e_1.\]
Comparing with \eqref{eq:Lie_brackets_harmonic} we get $a_{12}=a_{13}=b_{12}=0$, $a_{23}=2\lambda/\sqrt{\lambda\mu\nu}$, $b_{13}=2\mu/\sqrt{\lambda\mu\nu}$, and $\lambda=\mu+\nu$.

Suppose now $\mathfrak g\cong\mathfrak{e}(1,1)$. 
By Lemma \ref{lemma:Ric=-B_Riemannian}, the Ricci curvature $\mathrm{Ric}(g)$ has signature $(0,1,2)$ (cf.\ Table \ref{table:three-dim-lie-algebras-uni}). 
Take the Lie algebra $\mathfrak{e}(1,1)$ with basis $\{x,y,z\}$ and metric $g(\nu)$, and write $e_1=(x+y)/\sqrt 2$, $e_2=(y-x)/\sqrt 2$, and $e_3=z/\sqrt \nu$. 
Then $\{e_1,e_2,e_3\}$ is an orthonormal basis and
\[[e_1,e_2]=0,\qquad [e_1,e_3]=\frac{1}{\sqrt{\nu}}e_2,\qquad [e_2,e_3]=\frac{1}{\sqrt{\nu}}e_1.\]
Comparing with \eqref{eq:Lie_brackets_harmonic} we get $a_{12}=a_{13}=b_{12}=0$ and $a_{23}=b_{13}=-1/\sqrt{\nu}$.
\end{proof}

For completeness, we now check that $\mathfrak{e}(1,1)$ is the unique three-dimensional almost Abelian Lie algebra admitting a Riemannian metric carrying left-invariant harmonic spinors.
We use Proposition \ref{prop:Dirac_almost_Abelian} to do this.
\begin{proposition}
Let $(\mathfrak g=\mathfrak h\rtimes_D\mathbb R,g)$ be a three-dimensional almost Abelian Riemannian Lie algebra. 
Then it admits left-invariant harmonic spinors if and only if $\mathfrak g\cong\mathfrak{e}(1,1)$ and $g$ is equivalent to $g(\nu)$ for $\nu>0$.
\end{proposition}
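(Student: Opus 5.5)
Since the metric is Riemannian, $\mathfrak h^\perp$ is non-degenerate and $(\mathfrak g=\mathfrak h\rtimes_D\mathbb R,g)$ is non-isotropic. We therefore work in an orthonormal basis $\{e_1,e_2,e_3\}$ with $\mathfrak h=\mathrm{Span}_{\mathbb R}\{e_1,e_2\}$ and $D=\mathrm{ad}(e_3)|_{\mathfrak h}=(d_{ij})$, and apply Proposition \ref{prop:Dirac_almost_Abelian}. With $\varepsilon_1=\varepsilon_2=1$ it reads
\[\slashed{D}\psi=-\tfrac14(d_{12}-d_{21})(e^1\wedge e^2\wedge e^3)\psi-\tfrac12\mathrm{tr}(D)e^3\psi.\]
In dimension three, $e^1\wedge e^2\wedge e^3$ acts on $\Sigma=\mathbb C^2$ as a nonzero scalar multiple of the identity. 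With the matrices of Section \ref{subsec:general-results} it is $\pm i\,\mathrm{Id}$. The endomorphism $e^3$, on the other hand, is traceless with eigenvalues $\pm i$. So $\slashed D$ is a matrix of the form $\alpha\,\mathrm{Id}+\beta e^3$ with $\alpha,\beta$ real multiples of $i$, and it is singular if and only if $\alpha=\pm\beta$ in the appropriate sense. Corollary \ref{cor:almost_Abelian_non_unimodular} already gives $\mathrm{tr}(D)=0$. It then remains to show $d_{12}=d_{21}$, which holds because $\slashed D=\alpha\,\mathrm{Id}$ must vanish. Alternatively, the same conclusion follows from the determinant criterion in Proposition \ref{prop:3d_Riemmanian_harmonic}. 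Either way, left-invariant harmonic spinors exist if and only if $D$ is symmetric and traceless, i.e.\ $D\in\mathfrak{sym}_0(2)$. In that case every left-invariant spinor is harmonic.

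Next I identify the Lie algebra. Since $\mathfrak g$ is non-Abelian, $D\neq 0$. A nonzero symmetric traceless $2\times2$ matrix can be orthogonally diagonalised as $\mathrm{diag}(\lambda,-\lambda)$ with $\lambda>0$. Rotating $\{e_1,e_2\}$ by an orthogonal change of basis preserves orthonormality, and doing so gives $[e_3,e_1]=\lambda e_1$, $[e_3,e_2]=-\lambda e_2$. Hence $\mathfrak g\cong\mathfrak e(1,1)$. This agrees with Lemma \ref{lemma:cmatrix}, and it also shows that $\mathfrak{sl}(2,\mathbb R)$ cannot occur here, as expected since it is not almost Abelian.

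For the metric, I set $x=e_1$, $y=e_2$ and $z=e_3/\lambda$. Then $[z,x]=x$, $[z,y]=-y$, $[x,y]=0$, and in the basis $\{x,y,z\}$ the metric is $\mathrm{diag}(1,1,1/\lambda^2)=g(\nu)$ with $\nu=\lambda^{-2}>0$. For the converse I reuse the computation at the end of the proof of Theorem \ref{thm:3d_Riemannian_classification}. There $g(\nu)$ gives $D$ symmetric and traceless in the orthonormal basis $(x\pm y)/\sqrt2$, $z/\sqrt\nu$, so harmonic spinors exist. Equivalence is preserved by automorphisms, so this settles the statement.

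The main obstacle is the linear-algebra step showing that $\alpha\,\mathrm{Id}+\beta e^3$ is singular only when both coefficients vanish. Here $\alpha$ and $\beta$ are purely imaginary and $e^3$ has eigenvalues $\pm i$, so the determinant is $-(|\alpha|^2-|\beta|^2)$ up to sign, which can vanish when $|\alpha|=|\beta|$. I must therefore argue carefully. The cleanest route is to rely on Corollary \ref{cor:almost_Abelian_non_unimodular} to kill $\beta$ first, after which $\alpha=0$ is forced. I also need care with the choice of $\mathfrak h$, in line with Remark \ref{rmk:heis}, but $\mathfrak{heis}_3$ is already excluded by Lemma \ref{lemma:cmatrix}.
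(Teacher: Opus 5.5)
Your argument is correct and is essentially the paper's proof: you use Corollary~\ref{cor:almost_Abelian_non_unimodular} to force $\mathrm{tr}(D)=0$, read off $d_{12}=d_{21}$ from Proposition~\ref{prop:Dirac_almost_Abelian}, orthogonally diagonalise $D$ as $\mathrm{diag}(\lambda,-\lambda)$, and rescale $e_3$ to land on $g(\nu)$ with $\nu=\lambda^{-2}$ (the paper's $t$ is your $\lambda$). One harmless slip: with the matrices of Section~\ref{subsec:general-results} the volume element $e^1\wedge e^2\wedge e^3$ acts as $+\mathrm{Id}$, not $\pm i\,\mathrm{Id}$ (the factor $-i$ belongs to the Lorentzian representation), so $\slashed{D}=\mathrm{diag}(\alpha+i\beta,\alpha-i\beta)$ with $\alpha=-\tfrac14(d_{12}-d_{21})$ and $\beta=-\tfrac12\mathrm{tr}(D)$ both real, giving $\det\slashed{D}=\alpha^2+\beta^2$; hence the ``$|\alpha|=|\beta|$'' obstacle you anticipate never arises, and your detour through the Corollary, while valid, is not actually needed.
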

\begin{proof}
Let $\{e_1,e_2,e_3\}$ be an orthonormal basis of $(\mathfrak g,g)$, $\mathfrak h=\mathrm{Span}_{\mathbb R}\{e_1,e_2\}$, and 
\[D=\mathrm{ad}(e_3)|_{\mathfrak h}=\begin{pmatrix}d_{11}&d_{12}\\d_{21}&d_{22}\end{pmatrix}\] 
for $d_{ij}\in\mathbb R$. 
First, if $\mathrm{tr}(D)\neq0$, then $(\mathfrak g,g)$ does not admit left-invariant harmonic spinors by Corollary \ref{cor:almost_Abelian_non_unimodular}. 
Therefore $d_{22}=-d_{11}$. 
Let $\psi\in\Sigma$ be a left-invariant spinor. 
By Proposition \ref{prop:Dirac_almost_Abelian} we have 
\[\slashed{D}\psi=-\frac{1}{4}(d_{12}-d_{21})(e^1\wedge e^2\wedge e^3)\psi=-\frac{1}{4}(d_{12}-d_{21})\psi=0\] if and only if $d_{21}=d_{12}$. 
That is, the endomorphism $D$ is equivalent to
\[\sqrt{d_{11}^2+d_{12}^2}\begin{pmatrix}-1&0\\0&1\end{pmatrix},\] 
which implies that $\mathfrak g$ is completely solvable, therefore $\mathfrak g\cong\mathfrak{e}(1,1)$. 
A computation shows that in this situation we have $\sigma(\mathrm{Ric}(g))=(0,1,2)$, as expected. 
Set $t\coloneqq\sqrt{d_{11}^2+d_{12}^2}>0$ and choose $\theta\in\mathbb R$ such that $\cos(2\theta)=d_{11}/t$ and $\sin(2\theta)=d_{12}/t$. 
Take \[x=-\sin(\theta)e_1+\cos(\theta)e_2,\qquad y=\cos(\theta)e_1+\sin(\theta)e_2,\qquad z=-\frac{1}{t}e_3.\] 
The basis $\{x,y,z\}$ satisfies $[x,y]=0$, $[x,z]=-x$, and $[y,z]=y$. 
The matrix of the metric $g$ in this new basis is 
\[\begin{pmatrix}1&0&0\\0&1&0\\ 0&0&1/t^2\end{pmatrix},\]
which gives the metric $g(\nu)$ with $\nu=1/t^2$.
\end{proof}

\subsection{A four-dimensional non-unimodular example}
\label{subsec:four-dim-non-uni-ex}
In Lemma~\ref{lemma:cmatrix} we have seen that if a three-dimensional Riemannian Lie algebra $(\mathfrak g,g)$ admits left-invariant harmonic spinors, then $\mathfrak g$ is unimodular. 
We conclude this section by exhibiting a four-dimensional example where this does not hold.

Let us consider the Lie algebra $\mathfrak g$ defined by 
\[\dd e^1=e^1\wedge e^4+4e^2\wedge e^3,\qquad \dd e^2=e^2\wedge e^4,\qquad \dd e^3=0,\qquad \dd e^4=0,\] 
where $\{e^1,e^2,e^3,e^4\}$ is the dual basis of the orthonormal basis $\{e_1,e_2,e_3,e_4\}$ of $(\mathfrak g,g)$. 
Since 
\[\dd(e^1\wedge e^2\wedge e^3)=2e^1\wedge e^2\wedge e^3\wedge e^4,\] 
this Lie algebra is non-unimodular. 
The basis vectors $e^1,e^2,e^3,e^4$ are identified with the following endomorphisms of $\Sigma=\mathbb C^4$, now a representation of $\mathrm{Cl}(4)$: 
\[
e^1 =
\left(\begin{smallmatrix}
    0 & 0 & 0 & -1 \\
    0 & 0 & -1 & 0 \\
    0 & 1 & 0 & 0 \\
    1 & 0 & 0 & 0
\end{smallmatrix}\right),
\quad 
e^2=\left(\begin{smallmatrix}
    0 & 0 & 0 & i \\
    0 & 0 & -i & 0 \\
    0 & -i & 0 & 0 \\
    i & 0 & 0 & 0
\end{smallmatrix}\right), \quad
e^3=\left(\begin{smallmatrix}
    0 & 0 & -1 & 0 \\
    0 & 0 & 0 & 1 \\
    1 & 0 & 0 & 0 \\
    0 & -1 & 0 & 0
\end{smallmatrix}\right),
\quad  
e^4=\left(\begin{smallmatrix}
    0 & 0 & i & 0 \\
    0 & 0 & 0 & i \\
    i & 0 & 0 & 0 \\
    0 & i & 0 & 0
\end{smallmatrix}\right).
\]
Using Proposition \ref{prop:formula_Dirac_operator} we compute 
\begin{align*}
-4\slashed{D}\psi=4(e^1\wedge e^2\wedge e^3+e^4)\psi=
8i\left(\begin{smallmatrix}
        0&0&1&0\\
        0&0&0&1\\
        0&0&0&0\\
        0&0&0&0
\end{smallmatrix}\right)\psi.
\end{align*}
Since $\det(\slashed{D})=0$, then $(\mathfrak g,g)$ admits left-invariant harmonic spinors.

Let us identify this four-dimensional non-unimodular Lie algebra. 
Consider the map $\chi\colon\mathfrak g\to\mathbb R$ defined by 
\[\chi(x)\coloneqq\mathrm{tr}(\mathrm{ad}(x))\] 
for all $x\in\mathfrak g$. 
The kernel of $\chi$ is an ideal of $\mathfrak g$ which contains the derived subalgebra $[\mathfrak g,\mathfrak g]$. 
It is called the \emph{unimodular kernel} of $\mathfrak g$ and it is a unimodular Lie algebra itself. 
In our case, we have that 
\[\ker(\chi)=\mathrm{Span}_{\mathbb R}\{e_1,e_2,e_3\}\cong\mathfrak{heis}_3.\]
Setting $f_0\coloneqq e_4$, $f_1\coloneqq e_2$, $f_2\coloneqq e_3$, and $f_3\coloneqq-4e_1$, we conclude that the Lie algebra $\mathfrak g$ is isomorphic to the Lie algebra denoted by $\mathfrak{d}_{4,1}$ in \cite{andrada-barberis-dotti-ovando}.

\section{Harmonic spinors on three-dimensional Lorentzian Lie groups}
\label{sec:harmonic-spinors-three-dimensional-lorentzian-groups}

We now carry out the same program as in Section \ref{sec:harmonic-spinors-riem-three-dim} in the Lorentzian case.
We will distinguish between the unimodular and the non-unimodular cases as the existence of left-invariant harmonic spinors is now equivalent to various non-trivial algebraic conditions.
Unlike in Section \ref{sec:harmonic-spinors-riem-three-dim}, there will be many more metrics to be discussed due to the abundance of the various classification results involved.
To simplify the exposition, we will present only the essential details of the arguments.

Let $(\mathfrak g,g)$ be a three-dimensional Lorentzian Lie algebra, $\{e_1,e_2,e_3\}$ an orthonormal basis of $(\mathfrak g,g)$ satisfying $\varepsilon_1=\varepsilon_2=-\varepsilon_3=1$, and $\{e^1,e^2,e^3\}$ its dual basis. 
We assume to be in the same set-up as in \eqref{eq:str-eq-setup}--\eqref{eq:d^2=0}.
Consider the Clifford algebra $\mathrm{Cl}(2,1)$, generated by $\{e^1,e^2,e^3\}$. 
The basis vectors $e^1,e^2,e^3$ are identified with the following endomorphisms of $\Sigma=\mathbb C^2$: 
\begin{equation}
\label{eq:rep_Cl(2,1)}
e^1=\begin{pmatrix}
    0&i\\
    i&0
\end{pmatrix},\qquad e^2=\begin{pmatrix}
    0&1\\
    -1&0
\end{pmatrix},\qquad e^3=\begin{pmatrix}
    1&0\\
    0&-1
\end{pmatrix}.
\end{equation}
Note that $\varepsilon_i=g(e^i,e^i) = 1$ if $i=1,2$, and $\varepsilon_3=g(e^3,e^3)=-1$.
We compute the Dirac operator acting on a left-invariant spinor $\psi\in\Sigma$ by using Proposition \ref{prop:formula_Dirac_operator}: 
\begin{align*}
-4\slashed{D}\psi&=\sum_{i=1}^3(\varepsilon_ie^i\wedge\dd e^i+2\iota_{e_i}\dd e^i)\psi\\
&=(a_{23}-b_{13}-c_{12})(e^1\wedge e^2\wedge e^3)\psi\\
&\quad-2(b_{12}+c_{13})e^1\psi+2(a_{12}-c_{23})e^2\psi+2(a_{13}+b_{23})e^3\psi.
\end{align*}
Using the matrix representations in \eqref{eq:rep_Cl(2,1)} we get 
\[4\slashed{D}\psi=\begin{pmatrix}
    -2(a_{13}+b_{23})+i(a_{23}-b_{13}-c_{12})&-2(a_{12}-c_{23})+2i(b_{12}+c_{13})\\
    2(a_{12}-c_{23})+2i(b_{12}+c_{13})&2(a_{13}+b_{23})+i(a_{23}-b_{13}-c_{12})
\end{pmatrix}\psi.\]
Left-invariant harmonic spinors exist when the $2\times 2$ matrix on the right-hand side has zero determinant, i.e.\
\begin{equation}
\label{eq:Dirac_Lorentzian}
4(a_{12}-c_{23})^2-4(a_{13}+b_{23})^2-(a_{23}-b_{13}-c_{12})^2+4(b_{12}+c_{13})^2=0.
\end{equation}
Let us consider the unimodular and the non-unimodular cases separately.
\subsection{Unimodular case}
\label{subsec:unimodular-case}
Recall that $\mathfrak g$ is unimodular if and only if $\dd(\Lambda^2\mathfrak g^*)=0$. We have 
\begin{align*}
\dd(e^1\wedge e^2)&=-(a_{13}+b_{23})e^1\wedge e^2\wedge e^3,\\
\dd(e^1\wedge e^3)&=(a_{12}-c_{23})e^1\wedge e^2\wedge e^3,\\
\dd(e^2\wedge e^3)&=(b_{12}+c_{13})e^1\wedge e^2\wedge e^3.
\end{align*}
Therefore, $\mathfrak g$ is unimodular if and only if $b_{23}=-a_{13}$, $c_{13}=-b_{12}$, and $c_{23}=a_{12}$. 
Note that these relations are solutions to the system \eqref{eq:d^2=0}. 
Imposing these linear conditions on \eqref{eq:Dirac_Lorentzian}, we conclude that $\mathfrak g$ admits left-invariant harmonic spinors only if in addition $c_{12}=a_{23}-b_{13}$. 
In fact, all these relations imply that $\slashed{D}$ is identically zero, thus every left-invariant spinor $\psi\in\Sigma$ is harmonic. 
We have proved the following result.
\begin{proposition}
\label{prop:3d_Lorentzian_unimodular_harmonic}
A unimodular three-dimensional Lorentzian Lie algebra $(\mathfrak g,g)$ admits left-invariant harmonic spinors if and only if 
\begin{align*}
\dd e^1&=a_{12}e^1\wedge e^2+a_{13}e^1\wedge e^3+a_{23}e^2\wedge e^3,\\
\dd e^2&=b_{12}e^1\wedge e^2+b_{13}e^1\wedge e^3-a_{13}e^2\wedge e^3,\\
\dd e^3&=(a_{23}-b_{13})e^1\wedge e^2-b_{12}e^1\wedge e^3+a_{12}e^2\wedge e^3,
\end{align*} 
where $a_{12},a_{13},a_{23},b_{12},b_{13}\in\mathbb R$ and $\{e^1,e^2,e^3\}$ is an orthonormal basis with $e^3$ timelike. 
In this case, the space of left-invariant harmonic spinors is two-dimensional.
\end{proposition}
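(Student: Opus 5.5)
The argument is essentially the computation carried out just before the statement, organised as follows. I start from the general formula of Proposition~\ref{prop:formula_Dirac_operator} in signature $(2,1)$, with $\varepsilon_1=\varepsilon_2=1$ and $\varepsilon_3=-1$, applied to the structure equations \eqref{eq:str-eq-setup}. Using the representation \eqref{eq:rep_Cl(2,1)}, $4\slashed{D}$ becomes the explicit $2\times2$ matrix displayed above. Left-invariant harmonic spinors exist exactly when this matrix is singular, which is equation \eqref{eq:Dirac_Lorentzian}.

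Next I impose unimodularity. The three displayed differentials $\dd(e^i\wedge e^j)$ show that $\dd(\Lambda^2\mathfrak g^*)=0$ is equivalent to the linear relations $b_{23}=-a_{13}$, $c_{13}=-b_{12}$ and $c_{23}=a_{12}$. Substituting these into \eqref{eq:Dirac_Lorentzian}, the terms $4(a_{12}-c_{23})^2$, $4(a_{13}+b_{23})^2$ and $4(b_{12}+c_{13})^2$ all vanish. The determinant condition therefore reduces to $-(a_{23}-b_{13}-c_{12})^2=0$, that is, $c_{12}=a_{23}-b_{13}$.

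This is the only place where some care is needed. Indefiniteness of \eqref{eq:Dirac_Lorentzian} could in principle allow a nonzero determinant-zero locus, but once the unimodular terms are killed a single negative square remains, which forces the remaining coefficient to vanish. Under all four relations, every entry of the matrix for $4\slashed{D}$ is zero, so $\slashed{D}\equiv 0$ on $\Sigma\cong\mathbb C^2$. Hence the space of harmonic left-invariant spinors is all of $\Sigma$, which is two-dimensional.

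For the converse I need the displayed structure equations to define a genuine Lie algebra. I will check that the relations satisfy \eqref{eq:d^2=0}: the unimodularity relations alone already solve that system, as noted above, and $c_{12}$ is otherwise free in it. Then any choice of $a_{12},a_{13},a_{23},b_{12},b_{13}$ gives a unimodular Lorentzian Lie algebra on which $\slashed{D}$ vanishes identically, and this completes the equivalence.
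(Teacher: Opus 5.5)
Your proposal is correct and follows the paper's argument essentially step for step. You compute the $2\times 2$ matrix of $4\slashed{D}$ via Proposition~\ref{prop:formula_Dirac_operator}, impose the linear unimodularity relations so that \eqref{eq:Dirac_Lorentzian} collapses to $-(a_{23}-b_{13}-c_{12})^2=0$, and then observe that the whole matrix vanishes, giving the two-dimensional kernel. Your check of \eqref{eq:d^2=0} is the same consistency remark the paper makes; it is not logically needed for the equivalence, since $\mathfrak g$ is already given as a Lie algebra, but it confirms that the family is realised for every choice of parameters.
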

The Lie brackets of the Lie algebra in Proposition \ref{prop:3d_Lorentzian_unimodular_harmonic} are 
\begin{equation}
\label{eq:Lie_brackets_Lor_unimodular}
\begin{aligned}
[e_1,e_2]&=-a_{12}e_1-b_{12}e_2-(a_{23}-b_{13})e_3,\\
[e_1,e_3]&=-a_{13}e_1-b_{13}e_2+b_{12}e_3,\\
[e_2,e_3]&=-a_{23}e_1+a_{13}e_2-a_{12}e_3.
\end{aligned}
\end{equation}
In order to detect the unimodular three-dimensional Lorentzian Lie algebras $(\mathfrak g,g)$ admitting left-invariant harmonic spinors, we make use of the following relation between the Killing form $B_{\mathfrak g}$ of $\mathfrak g$ and the Ricci curvature $\mathrm{Ric}(g)$ of $g$, which is obtained as in Lemma \ref{lemma:Ric=-B_Riemannian}.
\begin{lemma}
\label{lemma:Ric=-B_Lorentzian_uni}
Let $(\mathfrak g,g)$ be a unimodular three-dimensional Lorentzian Lie algebra admitting left-invariant harmonic spinors. Then the Ricci curvature of $g$ satisfies $\mathrm{Ric}(g)=-B_{\mathfrak g}$.
In particular, if $\sigma(\mathfrak g)=(p,q,r)$, then $\sigma(\mathrm{Ric}(g))=(q,p,r)$.
\end{lemma}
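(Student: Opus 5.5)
We proceed exactly as in the proof of Lemma \ref{lemma:Ric=-B_Riemannian}, using the general Ricci formula valid for any pseudo-Riemannian Lie algebra with orthonormal basis $\{e_1,e_2,e_3\}$:
\[
\mathrm{Ric}(g)(x,x)=-\frac12\sum_{i}\varepsilon_i g([x,e_i],[x,e_i])+\frac12\sum_{i<j}\varepsilon_i\varepsilon_j g([e_i,e_j],x)^2-\frac12 B_{\mathfrak g}(x,x)-g([z,x],x),
\]
now with $\varepsilon_1=\varepsilon_2=-\varepsilon_3=1$. Since $\mathfrak g$ is unimodular, $\mathrm{tr}(\mathrm{ad}(\cdot))=0$, so $z=0$. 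The last term therefore drops out immediately.

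The claim $\mathrm{Ric}(g)=-B_{\mathfrak g}$ is then equivalent, by polarisation, to the identity of quadratic forms
\[
\sum_{i<j}\varepsilon_i\varepsilon_j g([e_i,e_j],x)^2-\sum_i\varepsilon_i g([x,e_i],[x,e_i])=-B_{\mathfrak g}(x,x)\qquad\text{for all }x\in\mathfrak g.
\]
We verify this using the brackets \eqref{eq:Lie_brackets_Lor_unimodular}, which by Proposition \ref{prop:3d_Lorentzian_unimodular_harmonic} describe every such $(\mathfrak g,g)$.

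The computation is organised as follows. Write $x=\sum_k x_k e_k$ and expand $[x,e_i]$ using \eqref{eq:Lie_brackets_Lor_unimodular}. Compute the Lorentzian norms $g(u,u)=u_1^2+u_2^2-u_3^2$. Compute $B_{\mathfrak g}$ as the trace of $\mathrm{ad}(x)^2$, with $\mathrm{ad}(x)$ written as the $3\times3$ matrix read off from the brackets. Both sides are then quadratic polynomials in $x_1,x_2,x_3$ with coefficients quadratic in the five parameters $a_{12},a_{13},a_{23},b_{12},b_{13}$, so it suffices to compare the six coefficients of $x_kx_l$.

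A useful structural shortcut is to note that unimodularity makes $\mathrm{ad}(x)$ traceless. The harmonicity constraint $c_{12}=a_{23}-b_{13}$ is what makes the "skew part" contributions coincide. Explicitly, in the basis, $\mathrm{ad}(x)$ differs from a $g$-skew endomorphism by a $g$-symmetric part, and the first two terms combine into $-\mathrm{tr}(\mathrm{ad}(x)^2)$ precisely when the constraint holds. The most reliable route, however, is the direct coefficient comparison, which could also be delegated to a computer algebra check.

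The main obstacle is purely bookkeeping: the sign changes from $\varepsilon_3=-1$ and the modified bracket $[e_1,e_2]$, with coefficient $-(a_{23}-b_{13})$ on $e_3$ rather than $+$. Particular care is needed with the $x_3^2$ and $x_kx_3$ coefficients, where the timelike signs enter.

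Once $\mathrm{Ric}(g)=-B_{\mathfrak g}$ is established, the signature statement is immediate. Negating a symmetric bilinear form swaps the numbers of positive and negative directions and preserves the null ones, so $\sigma(\mathrm{Ric}(g))=(q,p,r)$ whenever $\sigma(\mathfrak g)=(p,q,r)$. This is independent of the choice of basis, since $B_{\mathfrak g}$ is automorphism invariant.
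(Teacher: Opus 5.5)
Your proposal is correct and follows the paper's route. The paper proves this exactly as in Lemma \ref{lemma:Ric=-B_Riemannian}: it plugs the brackets \eqref{eq:Lie_brackets_Lor_unimodular} into the Conti--Rossi Ricci formula with $\varepsilon_1=\varepsilon_2=-\varepsilon_3=1$, notes $z=0$, and checks $\mathrm{Ric}(g)=-B_{\mathfrak g}$ by direct computation, after which the signature swap is immediate. Your ``structural shortcut'' paragraph is only heuristic, but nothing is missing, because you fall back on the explicit coefficient comparison, and that comparison does go through.
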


We now proceed to study each Lie algebra using the notation and classification results obtained in \cite{boucetta-chakkar}, which we collect in the Appendix below for the reader's convenience.
\begin{proposition}
\label{prop:su(2)-harm-sp}
Suppose that $g$ is a Lorentzian metric on $\mathfrak g=\mathfrak{su}(2)$ carrying left-invariant harmonic spinors. 
Then $g$ is equivalent to the metric $\textup{(su)}$ with $\mu_3=\mu_1+\mu_2$ for $\mu_1\geq\mu_2>0$.
\end{proposition}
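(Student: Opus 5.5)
The plan is to reduce to the normal form for Lorentzian metrics on $\mathfrak{su}(2)$ recalled in the Appendix, and then read off the harmonicity condition from Proposition \ref{prop:3d_Lorentzian_unimodular_harmonic}, in the same way as the $\mathfrak{sl}(2,\mathbb R)$ case of Theorem \ref{thm:3d_Riemannian_classification}.

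First I will invoke the Boucetta--Chakkar classification. Since the Killing form of $\mathfrak{su}(2)$ is definite, any Lorentzian $g$ can be diagonalised simultaneously with $B_{\mathfrak g}$ by an automorphism. Hence $g$ is equivalent to the metric $\textup{(su)}$, which in the basis $\{x,y,z\}$ with $[x,y]=2z$, $[z,x]=2y$, $[z,y]=-2x$ has matrix $\mathrm{diag}(\mu_1,\mu_2,-\mu_3)$, where $\mu_3>0$ and, after permuting the two spacelike directions by an automorphism, $\mu_1\geq\mu_2>0$. So it suffices to determine for which parameters this normal form carries left-invariant harmonic spinors.

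Next, set $s=\sqrt{\mu_1\mu_2\mu_3}$ and take the orthonormal basis
\[e_1=x/\sqrt{\mu_1},\qquad e_2=y/\sqrt{\mu_2},\qquad e_3=z/\sqrt{\mu_3},\]
in which $e_3$ is timelike, as required by Proposition \ref{prop:3d_Lorentzian_unimodular_harmonic}. A direct computation gives
\[[e_1,e_2]=\frac{2\mu_3}{s}e_3,\qquad [e_1,e_3]=-\frac{2\mu_2}{s}e_2,\qquad [e_2,e_3]=\frac{2\mu_1}{s}e_1.\]
Since $\mathfrak{su}(2)$ is unimodular, Proposition \ref{prop:3d_Lorentzian_unimodular_harmonic} says harmonic spinors exist if and only if these brackets have the form \eqref{eq:Lie_brackets_Lor_unimodular}.

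Matching coefficients term by term in \eqref{eq:Lie_brackets_Lor_unimodular}:
\begin{itemize}
\item the vanishing components force $a_{12}=b_{12}=a_{13}=0$;
\item the $e_1$-component of $[e_2,e_3]$ gives $a_{23}=-2\mu_1/s$;
\item the $e_2$-component of $[e_1,e_3]$ gives $b_{13}=2\mu_2/s$;
\item the $e_3$-component of $[e_1,e_2]$ gives $-(a_{23}-b_{13})=2(\mu_1+\mu_2)/s$, which must equal $2\mu_3/s$.
\end{itemize}
The only constraint is therefore $\mu_3=\mu_1+\mu_2$. Conversely, this condition makes the brackets fit \eqref{eq:Lie_brackets_Lor_unimodular}, so such metrics do carry harmonic spinors. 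As a consistency check, Lemma \ref{lemma:Ric=-B_Lorentzian_uni} predicts that $\mathrm{Ric}(g)=-B_{\mathfrak g}$ is positive definite, and one can verify this for this family.

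The main obstacle is sign and normalisation bookkeeping. The sign of $[z,y]=-2x$ and the convention for $e^3$ timelike must be tracked carefully, since an error flips $\mu_3=\mu_1+\mu_2$ into a difference. It also has to be confirmed that the Appendix normal form genuinely allows the ordering $\mu_1\geq\mu_2$ without loss of generality.
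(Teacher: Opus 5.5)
Your proposal is correct and follows the paper's proof essentially verbatim: the same orthonormal basis $e_1=x/\sqrt{\mu_1}$, $e_2=y/\sqrt{\mu_2}$, $e_3=z/\sqrt{\mu_3}$, the same brackets, and the same coefficient matching against \eqref{eq:Lie_brackets_Lor_unimodular}, giving $\mu_3=\mu_1+\mu_2$. The only differences are minor. The paper invokes Lemma~\ref{lemma:Ric=-B_Lorentzian_uni} at the outset to get $\sigma(\mathrm{Ric}(g))=(3,0,0)$, whereas you use it only as a consistency check. You also add a justification of the normal form $\textup{(su)}$ via simultaneous diagonalisation with $B_{\mathfrak g}$, which the paper takes directly from Boucetta--Chakkar.
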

\begin{proof}
By Lemma \ref{lemma:Ric=-B_Lorentzian_uni} and Table \ref{table:three-dim-lie-algebras-uni}, the Ricci tensor $\mathrm{Ric}(g)$ has signature $(3,0,0)$. 
Referring to Table \ref{table:simple-algebras-lorentzian-metrics}, write
\[e_1=\frac{1}{\sqrt{\mu_1}}x, \qquad e_2=\frac{1}{\sqrt{\mu_2}}y, \qquad e_3=\frac{1}{\sqrt{\mu_3}}z.\] 
Then $\{e_1,e_2,e_3\}$ is an orthonormal basis of $(\mathfrak{su}(2),\textup{(su)})$ satisfying $\varepsilon_1=\varepsilon_2=-\varepsilon_3=1$ and 
\[[e_1,e_2]=\frac{2\mu_3}{\sqrt{\mu_1\mu_2\mu_3}}e_3,\qquad[e_2,e_3]=\frac{2\mu_1}{\sqrt{\mu_1\mu_2\mu_3}}e_1,\qquad[e_3,e_1]=\frac{2\mu_2}{\sqrt{\mu_1\mu_2\mu_3}}e_2.\]
Comparing with \eqref{eq:Lie_brackets_Lor_unimodular} we get $a_{12}=a_{13}=b_{12}=0$, $a_{23}=-2\mu_1/\sqrt{\mu_1\mu_2\mu_3}$, $b_{13}=2\mu_2/\sqrt{\mu_1\mu_2\mu_3}$, and $\mu_3=\mu_1+\mu_2$. In particular, $\mu_1<\mu_2+\mu_3$.
\end{proof}
Hereafter we apply the same process to all other three-dimensional unimodular Lie algebras, checking which metrics carry left-invariant harmonic spinors.
As in the proof of Proposition \ref{prop:su(2)-harm-sp}, we use Table \ref{table:three-dim-lie-algebras-uni} and Tables \ref{table:simple-algebras-lorentzian-metrics}--\ref{table:non-simple-algebras-lorentzian-metrics} for useful metric and curvature data.
We only discuss those few cases admitting left-invariant harmonic spinors.
We set relevant bases up and leave computations and details to the reader.

\begin{proposition}
Suppose that $g$ is a Lorentzian metric on $\mathfrak g=\mathfrak{sl}(2,\mathbb R)$ carrying left-invariant harmonic spinors. 
Then $g$ is equivalent to one of the following metrics.
\begin{enumerate}
\item $\textup{(sll2)}$, with $\mu_1=\mu_3-\mu_2$ and $\mu_3>\mu_2>0$.
\item $\textup{(sll4)}$, with $2\beta^2+a^2\alpha=0$, $\beta>0$, and $-a^2<\alpha<0$.
\item $\textup{(sll6)}$, with $a=-2b$ and $b\neq0$.
\end{enumerate}
\end{proposition}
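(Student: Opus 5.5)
The plan follows the scheme of Proposition \ref{prop:su(2)-harm-sp}. The Killing form of $\mathfrak{sl}(2,\mathbb R)$ has signature $(2,1,0)$ (Table \ref{table:three-dim-lie-algebras-uni}). Hence Lemma \ref{lemma:Ric=-B_Lorentzian_uni} forces any Lorentzian metric $g$ carrying left-invariant harmonic spinors to satisfy $\mathrm{Ric}(g)=-B_{\mathfrak g}$, so that $\sigma(\mathrm{Ric}(g))=(1,2,0)$. As a first filter, I will go through the list of Lorentzian metrics on $\mathfrak{sl}(2,\mathbb R)$ up to automorphism in Table \ref{table:simple-algebras-lorentzian-metrics}, namely the families (sll1)--(sll6) of \cite{boucetta-chakkar}, and discard every family (or parameter range) whose Ricci signature cannot be $(1,2,0)$. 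This Ricci condition is necessary but not sufficient. The actual criterion is Proposition \ref{prop:3d_Lorentzian_unimodular_harmonic}: there must exist an orthonormal basis with $e_3$ timelike whose brackets have the form \eqref{eq:Lie_brackets_Lor_unimodular}.

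For each surviving family I will write down an explicit orthonormal basis adapted to the metric matrix and compute its brackets. Since $\mathfrak{sl}(2,\mathbb R)$ is unimodular, the three linear unimodularity relations hold automatically. The only remaining condition is the single scalar equation $c_{12}=a_{23}-b_{13}$, equivalently $a_{23}-b_{13}-c_{12}=0$, which is the vanishing of the diagonal imaginary part of $4\slashed{D}$.

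\textbf{Diagonal family (sll2).} Here I take $e_1=x/\sqrt{\mu_1}$, $e_2=y/\sqrt{\mu_2}$, $e_3=z/\sqrt{\mu_3}$, reordering so that the timelike vector is $e_3$. The brackets then involve only the coefficients $2\mu_i/\sqrt{\mu_1\mu_2\mu_3}$, and the scalar condition becomes linear in the $\mu_i$. I expect it to give $\mu_1=\mu_3-\mu_2$, and positivity of $\mu_1$ then forces $\mu_3>\mu_2$. Any other diagonal family (for example with a different position of the timelike direction) should produce an incompatible sign, such as $\mu_1+\mu_2+\mu_3=0$, and so be excluded.

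\textbf{Non-diagonal families (sll4), (sll6).} These involve off-diagonal entries. I will orthonormalise by Gram--Schmidt: for (sll4) mix $x$ and $z$, and for (sll6) mix $x$ and $y$. Then I read off the $a_{ij},b_{ij},c_{ij}$ by expressing $\dd e^i$ in the new coframe. For (sll4) the condition should reduce, after clearing the factor $N=\sqrt{\alpha^2+\beta^2}$, to $2\beta^2+a^2\alpha=0$. The range $-a^2<\alpha<0$ then comes from the Lorentzian and Ricci-signature constraints of that family. For (sll6) I expect the linear relation $a=-2b$. Families excluded by the Ricci filter, or for which the scalar equation has no admissible solution, are discarded.

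The main obstacle is the non-diagonal cases. There the orthonormal basis depends on square roots of the parameters, and one must place the timelike vector correctly as $e_3$, since an orientation or sign slip flips $a_{23}-b_{13}-c_{12}$ into a different combination. I will first simplify with the family's own normalisation (e.g.\ $\alpha<0$ and $\beta>0$ for (sll4)), and cross-check the result against $\mathrm{Ric}=-B_{\mathfrak g}$ evaluated on $z$.
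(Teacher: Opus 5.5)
Your plan is the paper's proof. You deduce $\sigma(\mathrm{Ric}(g))=(1,2,0)$ from $\mathrm{Ric}(g)=-B_{\mathfrak g}$. Then, for each family in Table~\ref{table:simple-algebras-lorentzian-metrics}, you pick an orthonormal basis with $e_3$ timelike and impose the one surviving scalar condition $a_{23}-b_{13}-c_{12}=0$. Your predicted outcomes are all correct: $\mu_1=\mu_3-\mu_2$ for \textup{(sll2)}, the impossible $\mu_1+\mu_2+\mu_3=0$ for \textup{(sll1)}, $2\beta^2+a^2\alpha=0$ for \textup{(sll4)}, and $a=-2b$ for \textup{(sll6)}.

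There is one concrete omission. The table has seven families, not six. \textup{(sll7)} has $\sigma(\mathrm{Ric})=(1,2,0)$ for every $a>0$, so it passes your Ricci filter, yet your plan never applies the scalar test to it. More generally, the Ricci filter removes no family outright: each of \textup{(sll1)}--\textup{(sll7)} attains $(1,2,0)$ for some parameters. So \textup{(sll1)}, \textup{(sll3)}, \textup{(sll5)} and \textup{(sll7)} must all be excluded by the scalar equation, and only \textup{(sll1)} is actually treated in your proposal.

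A cheap way to handle every family at once avoids Gram--Schmidt entirely. With $\varepsilon_3=-1$, one has $a_{23}-b_{13}-c_{12}=-\phi(e_1,e_2,e_3)$, where $\phi(u,v,w)=g([u,v],w)+g([v,w],u)+g([w,u],v)$ is an alternating $3$-form. Hence the harmonicity condition is equivalent to $\phi(x,y,z)=0$, that is, $g(y,y)+g(z,z)-g(x,x)=0$ in the basis of the table. Evaluating this gives:
\begin{itemize}
\item $\frac{K\alpha}{N}(a^2+2\alpha)\neq0$ for \textup{(sll3)}, since $\alpha>0$;
\item $2K(u+v)\neq0$ for \textup{(sll5)};
\item $K(12a^4+6a^2)\neq0$ for \textup{(sll7)}.
\end{itemize}
The same test reproduces exactly the three conditions in the statement.

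Finally, the range $-a^2<\alpha<0$ for \textup{(sll4)} does not come from the Ricci signature, which is $(1,2,0)$ throughout that family. It comes from the Lorentzian condition $\alpha^2<2\beta^2$, which is needed for $\frac{\beta}{N}x+z$ to be timelike. Combined with $2\beta^2=-a^2\alpha$, this gives $\alpha(\alpha+a^2)<0$.
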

\begin{proof}
By Lemma \ref{lemma:Ric=-B_Lorentzian_uni} and Table \ref{table:three-dim-lie-algebras-uni}, the Ricci tensor $\mathrm{Ric}(g)$ has signature $(1,2,0)$.
This gives a first constraint on which metrics in Table \ref{table:simple-algebras-lorentzian-metrics} are admissible.
One now chooses orthonormal bases for each Lorentzian metric.
Computing the corresponding Lie brackets, one checks whether each Lie algebra admits left-invariant harmonic spinors, and if so, under which conditions on the parameters.
For $(\textup{sll2})$, write
\[e_1=\frac{1}{\sqrt{\mu_1}}x, \qquad e_2=-\frac{1}{\sqrt{\mu_3}}z, \qquad e_3=\frac{1}{\sqrt{\mu_2}}x.\]
For $(\textup{sll4})$, write
\begin{gather*}
e_1=\frac{1}{\sqrt{-KN}}x, \quad e_2=\frac{1}{|a|\sqrt{\alpha K/N}}y, \\
e_3=\frac{1}{\sqrt{K(\alpha^2-2\beta^2)/N}}\left(\frac{\beta}{N}x+z\right).
\end{gather*}
For $(\textup{sll6})$, we first define
\[
\lambda_{\pm} \coloneqq K(a^2M+2a^2(8\pm\sqrt{a^2+64})+(8\pm\sqrt{a^2+64})^2N),\]
then write $e_1=|b|z/2$ and
\[
e_2=\frac{1}{\sqrt{|\lambda_+|}}(ax-(8+\sqrt{a^2+64})y), \qquad e_3=\frac{1}{\sqrt{|\lambda_-|}}(ax-(8-\sqrt{a^2+64})y).
\]
In all cases, these bases are orthonormal and satisfy $\varepsilon_1=\varepsilon_2=-\varepsilon_3=1$.
Computing the Lie brackets $[e_i,e_j]$ and comparing with \eqref{eq:Lie_brackets_Lor_unimodular}, the claimed algebraic constraints on the parameters follow.
One rules out all other cases with the same method.
\end{proof}

\begin{proposition}
Suppose that $g$ is a Lorentzian metric on $\mathfrak g=\mathfrak{e}(2)$ carrying left-invariant harmonic spinors. 
Then $g$ is equivalent to the metric $\textup{(ee2)}$ with $u=-v<0$.
\end{proposition}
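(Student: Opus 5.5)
We follow the scheme used for $\mathfrak{su}(2)$ and $\mathfrak{sl}(2,\mathbb R)$: first restrict the candidate metrics by curvature, then test the survivors directly against Proposition \ref{prop:3d_Lorentzian_unimodular_harmonic}.

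By Lemma \ref{lemma:Ric=-B_Lorentzian_uni} and Table \ref{table:three-dim-lie-algebras-uni}, since $\sigma(\mathfrak e(2))=(0,1,2)$, any Lorentzian metric on $\mathfrak e(2)$ carrying left-invariant harmonic spinors has $\mathrm{Ric}(g)=-B_{\mathfrak g}$, hence $\sigma(\mathrm{Ric}(g))=(1,0,2)$. Moreover $\mathrm{Ric}(g)$ must equal $-B_{\mathfrak g}$ as a bilinear form, not only have the right signature. In the basis $\{x,y,z\}$ with $[x,y]=z$, $[z,x]=y$, $[z,y]=0$ we have $B_{\mathfrak g}=-2x^*\otimes x^*$. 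The condition is therefore $\mathrm{Ric}(g)=2x^*\otimes x^*$: the Ricci tensor is positive semidefinite of rank one with kernel $\mathrm{Span}\{y,z\}$. We go through the list of Lorentzian metrics on $\mathfrak e(2)$ in Table \ref{table:non-simple-algebras-lorentzian-metrics} and discard every family whose Ricci data (signature, and when needed the kernel) is incompatible with this. Only a few families, with parameter constraints, should survive.

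For each surviving family we choose an orthonormal basis $\{e_1,e_2,e_3\}$ with $\varepsilon_1=\varepsilon_2=-\varepsilon_3=1$ and compute the brackets. We then require them to have the form \eqref{eq:Lie_brackets_Lor_unimodular}, equivalently that the relation $c_{12}=a_{23}-b_{13}$ holds (unimodularity already gives the other relations). For (ee2), with matrix $\left(\begin{smallmatrix}0&-1&0\\-1&u&0\\0&0&v\end{smallmatrix}\right)$, $v>0$, the block on $\mathrm{Span}\{x,y\}$ is Lorentzian. We take
\[e_3=\tfrac{1}{\sqrt{v}}\,z,\]
together with two combinations $\alpha x+\beta y$ giving an orthonormal basis of signature $(1,-1)$ on $\mathrm{Span}\{x,y\}$. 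After reordering, so that the timelike vector is called $e_3$, the condition $c_{12}=a_{23}-b_{13}$ becomes one polynomial identity in $u,v$. We expect it to reduce to $u+v=0$, i.e.\ $u=-v<0$. The Ricci signature condition serves as a consistency check.

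The main obstacle is the elimination step: the classification of \cite{boucetta-chakkar} contains several families for $\mathfrak e(2)$, some with null directions, and the signature test alone may not rule them all out. For any family that survives the curvature test, we will write out the orthonormal basis explicitly and verify that the linear condition on the structure constants has no admissible solution. When the metric has a null vector inside the Abelian ideal $\mathrm{Span}\{y,z\}$, we will instead use Proposition \ref{prop:Dirac_almost_Abelian_isotropic}: with $\mathrm{tr}(D)=0$, harmonicity reduces to the vanishing of $d_{12}$ and of $\varepsilon_1 d_{1,2}$-type terms, which should be easy to check directly.
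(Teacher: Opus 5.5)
Your plan is correct and is essentially the paper's proof. $\mathrm{Ric}(g)=-B_{\mathfrak g}$ forces $\sigma(\mathrm{Ric}(g))=(1,0,2)$, and in Table \ref{table:non-simple-algebras-lorentzian-metrics} only \textup{(ee2)} with $u=-v$ has this signature, so the elimination step you worry about is immediate and no kernel or isotropic analysis is needed. Your direct check is also right: with $e_1=z/\sqrt v$, $e_2=y/\sqrt{-u}-\sqrt{-u}\,x$ and $e_3=y/\sqrt{-u}$ one gets $a_{23}=\sqrt v$, $b_{13}=0$ and $c_{12}=-u/\sqrt v$, so $c_{12}=a_{23}-b_{13}$ holds exactly when $u=-v$ (the paper uses this basis only with $u=-v$, to confirm that harmonic spinors exist).
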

\begin{proof}
By Lemma \ref{lemma:Ric=-B_Lorentzian_uni} and Table \ref{table:three-dim-lie-algebras-uni}, the Ricci tensor $\mathrm{Ric}(g)$ has signature $(1,0,2)$. 
By the results in Table \ref{table:non-simple-algebras-lorentzian-metrics}, the only admissible metric is $(\textup{ee2})$ with $u=-v$.
Take the orthonormal basis
\[e_1= \frac{1}{\sqrt{v}}z, \qquad e_2=\frac{1}{\sqrt{v}}y-\sqrt{v}x, \qquad e_3=\frac{1}{\sqrt{v}}y.\]
One computes the structure constants by \eqref{eq:Lie_brackets_Lor_unimodular} with no contradiction, and the result follows.
\end{proof}

\begin{proposition}
Suppose that $g$ is a Lorentzian metric on $\mathfrak g=\mathfrak{e}(1,1)$ carrying left-invariant harmonic spinors. 
Then $g$ is equivalent to one of the following metrics.
\begin{enumerate}
\item $\textup{(sol1)}$, with $u=0$ and $v>0$.
\item $\textup{(sol4)}$, with $u>0$.
\item $\textup{(sol7)}$.
\end{enumerate}
\end{proposition}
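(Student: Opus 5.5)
I will follow the same strategy as for $\mathfrak{su}(2)$, $\mathfrak{sl}(2,\mathbb R)$ and $\mathfrak e(2)$: first filter the candidate metrics by curvature, then test the survivors directly against the bracket relations \eqref{eq:Lie_brackets_Lor_unimodular}. By Lemma \ref{lemma:Ric=-B_Lorentzian_uni} and Table \ref{table:three-dim-lie-algebras-uni}, since $\sigma(\mathfrak e(1,1))=(1,0,2)$, any Lorentzian metric carrying left-invariant harmonic spinors must satisfy $\mathrm{Ric}(g)=-B_{\mathfrak g}$, so in particular $\sigma(\mathrm{Ric}(g))=(0,1,2)$. I will go through the list of Lorentzian metrics on $\mathfrak e(1,1)$ in Table \ref{table:non-simple-algebras-lorentzian-metrics} and keep only those families, or the subfamilies cut out by parameter conditions, whose Ricci signature is $(0,1,2)$. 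I expect this step to eliminate most cases outright and to reduce the families $\textup{(sol1)}$ and $\textup{(sol4)}$ to the stated parameter ranges, or at least to a small set of candidates.

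For the remaining candidates, I need a direct check. Since $\mathrm{Ric}=-B$ is necessary but not sufficient, for each surviving metric I will exhibit an orthonormal basis $\{e_1,e_2,e_3\}$ with $\varepsilon_1=\varepsilon_2=-\varepsilon_3=1$, compute the brackets $[e_i,e_j]$, and compare them with \eqref{eq:Lie_brackets_Lor_unimodular}. Equivalently, after unimodularity, the only remaining condition is $c_{12}=a_{23}-b_{13}$ from Proposition \ref{prop:3d_Lorentzian_unimodular_harmonic}.

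The bases I plan to use are as follows.
\begin{itemize}
\item[$\textup{(sol1)}$] The $y,z$ block is $\left(\begin{smallmatrix}1&u/v\\u/v&1\end{smallmatrix}\right)$. I take $e_1=x\sqrt{(u^2-v^2)}/2$, suitably signed, and $e_2,e_3$ proportional to $y\pm z$, normalised according to the signs of $1\pm u/v$. The resulting condition should force $u=0$.
\item[$\textup{(sol4)}$] I take $e_1=\sqrt u\,x$, $e_2=z$, $e_3=y$. Here $c_{12}=a_{23}-b_{13}$ should hold identically for $u>0$.
\item[$\textup{(sol7)}$] The metric $x\odot z+y^2$ is of isotropic type. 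I take $e_1=y$, $e_2=(x+z)/\sqrt2$, $e_3=(x-z)/\sqrt2$ and verify that the relation holds with no free parameters. As a cross-check, I can use Proposition \ref{prop:Dirac_almost_Abelian_isotropic} with $\mathfrak h=\mathrm{Span}\{y,z\}$; there $\mathrm{tr}(D)=0$, and the remaining terms should vanish.
\end{itemize}

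For every other family I will either show that the Ricci signature is wrong, or show that the comparison with \eqref{eq:Lie_brackets_Lor_unimodular} yields a contradiction, for instance a forced vanishing of the structure constants.

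I expect the main obstacle to be the bookkeeping in this elimination step. The classification in \cite{boucetta-chakkar} contains several parametric families with off-diagonal entries. For these, choosing an orthonormal basis with the timelike vector in third position requires diagonalising a $2\times2$ block whose eigenvectors depend on the parameters, and the Ricci signature can change across parameter loci. My first attempt on each such family will be to compute $\mathrm{Ric}$ via the formula in the proof of Lemma \ref{lemma:Ric=-B_Riemannian}, with $z=0$, and compare it with $-B_{\mathfrak g}$ entrywise in the basis $\{x,y,z\}$. The equation $\mathrm{Ric}=-B$ is itself polynomial in the parameters, and it should already pin down the conditions $u=0$ and $u>0$ before any orthonormal frame is needed.
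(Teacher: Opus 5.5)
Your plan is the paper's argument. The Ricci-signature filter on Table~\ref{table:non-simple-algebras-lorentzian-metrics} leaves $\textup{(sol1)}$ with $u=0$, $\textup{(sol4)}$, $\textup{(sol7)}$ and also $\textup{(sol6)}$ with $u>0$, so name that last case explicitly: the bracket comparison with \eqref{eq:Lie_brackets_Lor_unimodular} eliminates it (in the paper's frame it forces $2/u=0$), and so does your entrywise test $\mathrm{Ric}=-B_{\mathfrak g}$, since there $\mathrm{Ric}(x,x)=0\neq-2=-B_{\mathfrak g}(x,x)$. One slip: in $\textup{(sol1)}$, $g(x,x)=4/(u^2-v^2)<0$ while $y\pm z$ are both spacelike (as $|u|<v$), so $x$ must go in the timelike slot, e.g.\ $e_1=z$, $e_2=y$, $e_3=\tfrac{v}{2}x$ at $u=0$, as in the paper.
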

\begin{proof}
By Lemma \ref{lemma:Ric=-B_Lorentzian_uni} and Table \ref{table:three-dim-lie-algebras-uni}, the Ricci tensor $\mathrm{Ric}(g)$ has signature $(0,1,2)$. 
The only admissible metrics from Table \ref{table:non-simple-algebras-lorentzian-metrics} are $(\textup{sol1})$, $(\textup{sol4})$, $(\textup{sol6})$, and $(\textup{sol7})$.
For the metric $(\textup{sol1})$ with $u=0$ and $v>0$, take the basis 
\[e_1=z, \qquad e_2=y, \qquad e_3= \frac{v}{2}x.\]
For $(\textup{sol4})$ with $u>0$, take the basis 
\[e_1= \sqrt{u}x, \qquad e_2=z, \qquad e_3=y.\]
For $(\textup{sol6})$ with $u>0$, put 
\[e_1=\frac{1}{u}x, \qquad e_2=\frac{\sqrt{2}}{2}\left(y+\left(1-\frac{u}{2}\right)z\right), \qquad e_3=\frac{\sqrt{2}}{2}\left(-y+\left(1+\frac{u}{2}\right)z\right).\]
Finally, for $(\textup{sol7})$ define
\[e_1=\frac{\sqrt{2}}{2}(x+z), \qquad e_2=y, \qquad e_3= \frac{\sqrt{2}}{2}(x-z).\]
By the usual procedure, one only rules out $(\textup{sol6})$.
\end{proof}
\begin{proposition}
Suppose that $g$ is a Lorentzian metric on $\mathfrak g=\mathfrak{heis}_3$ carrying left-invariant harmonic spinors. 
Then $g$ is equivalent to the metric $\textup{(nil0)}$.
\end{proposition}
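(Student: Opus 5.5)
By Lemma \ref{lemma:Ric=-B_Lorentzian_uni} and Table \ref{table:three-dim-lie-algebras-uni}, a Lorentzian metric on $\mathfrak{heis}_3$ carrying left-invariant harmonic spinors satisfies $\mathrm{Ric}(g)=-B_{\mathfrak{heis}_3}=0$. Indeed, the Killing form of a nilpotent Lie algebra vanishes identically, so $\sigma(\mathrm{Ric}(g))=(0,0,3)$ and $g$ is Ricci-flat. In dimension three Ricci-flatness is the same as flatness. So the first step is to go through the classification of Lorentzian metrics on $\mathfrak{heis}_3$ recalled in Table \ref{table:non-simple-algebras-lorentzian-metrics}. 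That classification, following \cite{boucetta-chakkar}, consists of the metrics $(\textup{nil0})$, $(\textup{nil1})$, $(\textup{nil2})$, possibly with a scaling parameter. I would keep only those whose Ricci tensor vanishes. The expectation is that the metrics for which the centre $\mathrm{Span}\{z\}$ is non-degenerate (spacelike or timelike) have non-zero Ricci curvature, of signature $(1,2,0)$ or $(2,1,0)$. The only flat one should be the metric where $z$ is null, namely
\[\begin{pmatrix}1&0&0\\0&0&1\\0&1&0\end{pmatrix},\]
which is $(\textup{nil0})$. The Ricci tensors are read off from the Appendix table, or recomputed with the formula in the proof of Lemma \ref{lemma:Ric=-B_riemannian}, using $z=0$ by unimodularity.

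Next I would verify that $(\textup{nil0})$ really carries harmonic spinors, by producing an orthonormal basis in the form \eqref{eq:Lie_brackets_Lor_unimodular}. Take
\[e_1=x,\qquad e_2=\tfrac{\sqrt2}{2}(y+z),\qquad e_3=\tfrac{\sqrt2}{2}(y-z),\]
so that $\varepsilon_1=\varepsilon_2=-\varepsilon_3=1$. Using $[x,y]=z=\tfrac{\sqrt2}{2}(e_2-e_3)$ and that $z$ is central, one gets
\[[e_1,e_2]=[e_1,e_3]=\tfrac12(e_2-e_3),\qquad [e_2,e_3]=0.\]
Comparing with \eqref{eq:Lie_brackets_Lor_unimodular}, the first bracket gives $a_{12}=0$, $b_{12}=-\tfrac12$ and $a_{23}-b_{13}=\tfrac12$. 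The second gives $a_{13}=0$, $b_{13}=-\tfrac12$ and $b_{12}=-\tfrac12$, which is consistent. The third gives $a_{23}=0$, which is consistent with $a_{23}-b_{13}=\tfrac12$. Hence Proposition \ref{prop:3d_Lorentzian_unimodular_harmonic} applies and every left-invariant spinor is harmonic.

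The main obstacle is making sure the non-flat candidates are genuinely excluded. I would rely on the curvature column of the Appendix table. As a direct cross-check, for a metric where $z$ is spacelike or timelike, I would pick an adapted orthonormal basis with $e_3\propto z$ or $e_1\propto z$ and compare with \eqref{eq:Lie_brackets_Lor_unimodular}. The bracket $[x,y]=\lambda z$ then forces a structure constant such as $a_{23}-b_{13}$ to be non-zero while the others vanish, and this contradicts the system. In either case only $(\textup{nil0})$ survives, which proves the proposition.
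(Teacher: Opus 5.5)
Your proposal is correct and follows the paper's proof essentially verbatim. Lemma~\ref{lemma:Ric=-B_Lorentzian_uni} with $B_{\mathfrak{heis}_3}=0$ forces $\sigma(\mathrm{Ric}(g))=(0,0,3)$, the Appendix table then leaves only $(\textup{nil0})$, and the same orthonormal basis $e_1=x$, $e_2=\tfrac{\sqrt2}{2}(y+z)$, $e_3=\tfrac{\sqrt2}{2}(y-z)$ is checked against \eqref{eq:Lie_brackets_Lor_unimodular}. One cosmetic correction: the other two metrics in the table are $(\textup{nil+})$ and $(\textup{nil-})$, with Ricci signatures $(1,2,0)$ and $(3,0,0)$ rather than $(2,1,0)$, which does not affect the argument.
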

\begin{proof}
By Lemma \ref{lemma:Ric=-B_Lorentzian_uni} and Table \ref{table:three-dim-lie-algebras-uni}, the Ricci tensor $\mathrm{Ric}(g)$ has signature $(0,0,3)$.
Then the only admissible metric from Table \ref{table:non-simple-algebras-lorentzian-metrics} is $(\textup{nil0})$.
Take the orthonormal basis
\[e_1=x, \qquad e_2=\frac{\sqrt{2}}{2}(y+z), \qquad e_3=\frac{\sqrt{2}}{2}(y-z).\]
The usual procedure yields no contradiction, and we conclude.
\end{proof}
We summarise all the above partial results into the following.
\begin{theorem}
\label{thm:3d_uni_Lorentzian_classification}
Let $(\mathfrak g,g)$ be a unimodular three-dimensional Lorentzian non-Abelian Lie algebra admitting left-invariant harmonic spinors.
Then $(\mathfrak g,g)$ is one of the cases in \emph{Table \ref{table:Lorentzian_uni}}.
\end{theorem}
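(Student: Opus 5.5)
The theorem collects the propositions just proved, so the plan is to assemble them through a case split on the isomorphism type of $\mathfrak g$. Since $\mathfrak g$ is unimodular and non-Abelian, Table \ref{table:three-dim-lie-algebras-uni} says it is one of $\mathfrak{su}(2)$, $\mathfrak{sl}(2,\mathbb R)$, $\mathfrak{e}(2)$, $\mathfrak{e}(1,1)$ or $\mathfrak{heis}_3$. By Proposition \ref{prop:3d_Lorentzian_unimodular_harmonic}, the existence of a single left-invariant harmonic spinor forces the structure equations into the five-parameter family given there. On this family $\slashed D\equiv 0$, and Lemma \ref{lemma:Ric=-B_Lorentzian_uni} gives $\mathrm{Ric}(g)=-B_{\mathfrak g}$.

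For each of the five algebras, I first read off $\sigma(B_{\mathfrak g})$ from Table \ref{table:three-dim-lie-algebras-uni}. Swapping the first two entries gives the forced signature of $\mathrm{Ric}(g)$: $(3,0,0)$, $(1,2,0)$, $(1,0,2)$, $(0,1,2)$ and $(0,0,3)$ respectively. Comparing with the Ricci signatures listed for the Boucetta--Chakkar normal forms in Tables \ref{table:simple-algebras-lorentzian-metrics}--\ref{table:non-simple-algebras-lorentzian-metrics} leaves a short list of admissible metrics, possibly with parameter constraints.

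For each surviving normal form, I then do the following:
\begin{enumerate}
\item Choose an explicit orthonormal basis with $e_3$ timelike, using the bases written in the preceding propositions.
\item Compute the brackets $[e_i,e_j]$.
\item Match them against \eqref{eq:Lie_brackets_Lor_unimodular}.
\end{enumerate}
Matching $[e_1,e_3]$ and $[e_2,e_3]$ determines $a_{12},a_{13},a_{23},b_{12},b_{13}$. The $e_3$-component of $[e_1,e_2]$ must then equal $-(a_{23}-b_{13})$, and together with the other components of $[e_1,e_2]$ this yields the algebraic constraints recorded in Table \ref{table:Lorentzian_uni}. Examples are $\mu_3=\mu_1+\mu_2$ for (su), and $2\beta^2=-a^2\alpha$ for (sll4). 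Cases where the constraints are incompatible with the parameter ranges, such as (sol6), are discarded. Conversely, every row of the table satisfies the relations, so Proposition \ref{prop:3d_Lorentzian_unimodular_harmonic} shows that its metrics do carry harmonic spinors.

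Finally, the four propositions for $\mathfrak{su}(2)$, $\mathfrak{sl}(2,\mathbb R)$, $\mathfrak{e}(2)$, $\mathfrak{e}(1,1)$ and the one for $\mathfrak{heis}_3$ are rewritten in the explicit bases $\{x,y,z\}$ of the table. The main obstacle is the $\mathfrak{sl}(2,\mathbb R)$ case. There the normal forms (sll4) and (sll6) are non-diagonal, and a Ricci signature of $(1,2,0)$ does not eliminate them. The orthonormalisations involve square roots such as $\lambda_\pm$, so the bracket matching is messy. I would first verify sign consistency, namely that $\lambda_+$ and $\lambda_-$ have the signs needed for $\varepsilon_2=-\varepsilon_3=1$. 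Only then would I extract the polynomial constraint and simplify it to $a=-2b$.

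For $\mathfrak{heis}_3$, Remark \ref{rmk:heis} warns that the choice of ideal matters for the almost Abelian formulas, but this does not affect the present argument: the argument uses the general formula rather than the almost Abelian one.
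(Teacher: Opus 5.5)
Your proposal is correct and is essentially the paper's argument. The theorem simply collects the five case-by-case propositions, each obtained by using Lemma~\ref{lemma:Ric=-B_Lorentzian_uni} to filter the Boucetta--Chakkar normal forms by Ricci signature and then matching brackets in explicit orthonormal bases against \eqref{eq:Lie_brackets_Lor_unimodular}; one small correction is that for (sol6) the matching gives an outright contradiction for every $u$, not merely a conflict with the parameter range, which does not affect the argument.
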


For completeness, we check that applying Proposition \ref{prop:Dirac_almost_Abelian} and Proposition \ref{prop:Dirac_almost_Abelian_isotropic} to the almost Abelian Lorentzian Lie algebras yields the same results. 
We distinguish between the isotropic and non-isotropic case. 

\begin{proposition}
\label{prop:3d_UAA_non_isotropic}
Let $(\mathfrak g=\mathfrak h\rtimes_D\mathbb R,g)$ be a unimodular three-dimensional non-isotropic almost Abelian Lorentzian Lie algebra. Then it admits left-invariant harmonic spinors if and only if one of the following cases holds. 
\begin{enumerate}
\item $\mathfrak g\cong\mathfrak{e}(1,1)$, and $g$ is either equivalent to $\textup{(sol1)}$ with $u=0$ and $v>0$, or to $\textup{(sol4)}$ with $u>0$.
\item $\mathfrak g\cong\mathfrak{e}(2)$, and $g$ is equivalent to $\textup{(ee2)}$ with $u=-v<0$.
\item $\mathfrak g\cong\mathfrak{heis}_3$, and $g$ is equivalent to $\textup{(nil0)}$.
\end{enumerate}
\end{proposition}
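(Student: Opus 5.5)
We plan to apply Proposition \ref{prop:Dirac_almost_Abelian} directly. In the non-isotropic case $\mathfrak h^\perp$ is non-degenerate, so it is either spacelike or timelike. We fix an orthonormal basis $\{e_1,e_2,e_3\}$ with $\mathfrak h=\mathrm{Span}_{\mathbb R}\{e_1,e_2\}$ and $e_3\perp\mathfrak h$, and we write $D=\mathrm{ad}(e_3)|_{\mathfrak h}=(d_{ij})$. Unimodularity gives $\mathrm{tr}(D)=0$, so $d_{22}=-d_{11}$. Proposition \ref{prop:Dirac_almost_Abelian} then reduces to
\[\slashed{D}\psi=-\tfrac14(\varepsilon_1d_{12}-\varepsilon_2d_{21})(e^1\wedge e^2\wedge e^3)\psi .\]
In $\mathrm{Cl}(2,1)$ the volume element acts as a nonzero scalar; with the representation \eqref{eq:rep_Cl(2,1)} it is a multiple of $i\,\mathrm{Id}$. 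Hence harmonic spinors exist if and only if $\varepsilon_1d_{12}=\varepsilon_2d_{21}$, and then every left-invariant spinor is harmonic. This is the same as saying that $D$ is $g|_{\mathfrak h}$-self-adjoint and traceless.

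We then split into two cases.

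\textbf{Case (a): $e_3$ timelike.} Here $g|_{\mathfrak h}$ is positive definite and $D$ is symmetric traceless. If $D\neq0$ it diagonalises orthogonally to $t\,\mathrm{diag}(1,-1)$, which gives $\mathfrak e(1,1)$. Rescaling $e_3$ by $1/t$, as in the Riemannian proof, should produce the metric (sol4) with $u=1/t^2>0$. If $D=0$ the algebra is Abelian and is excluded.

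\textbf{Case (b): $e_3$ spacelike.} Here $g|_{\mathfrak h}$ has signature $(1,1)$ and $D$ satisfies $d_{12}=-d_{21}$, so $D=\left(\begin{smallmatrix}p&q\\-q&-p\end{smallmatrix}\right)$ with $\det D=q^2-p^2$. We distinguish three subcases.
\begin{enumerate}
\item If $p^2>q^2$, the eigenvalues $\pm\sqrt{p^2-q^2}$ are real and distinct, so $\mathfrak g\cong\mathfrak e(1,1)$. Since $D$ is self-adjoint, the two eigenvectors are orthogonal and one of them is timelike. Normalising them gives a Lorentzian diagonal-type metric, which we expect to match (sol1) with $u=0$.
\item If $p^2<q^2$, the eigenvalues are purely imaginary, so $\mathfrak g\cong\mathfrak e(2)$. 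We expect this to match (ee2) with $u=-v$.
\item If $p^2=q^2\neq0$, then $D$ is nilpotent of rank one, so $\mathfrak g\cong\mathfrak{heis}_3$, and the kernel of $D$ is a null line. We expect this to give (nil0).
\end{enumerate}
In each subcase we will write explicit bases $\{x,y,z\}$ realising the normal-form brackets. We then compute the Gram matrix, using automorphisms to kill the residual parameters, and compare with Table \ref{table:non-simple-algebras-lorentzian-metrics}. Lemma \ref{lemma:Ric=-B_Lorentzian_uni} can serve as a consistency check via the Ricci signature.

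For the converse direction, we reuse the orthonormal bases exhibited in the preceding propositions for (sol1), (sol4), (ee2) and (nil0). In each of them, $\mathrm{Span}_{\mathbb R}\{e_1,e_2\}$ or another suitable pair of basis vectors is an Abelian ideal with non-degenerate orthogonal complement, so these metrics are indeed non-isotropic almost Abelian. Since they were already shown to carry harmonic spinors, the converse follows. For $\mathfrak{heis}_3$, Remark \ref{rmk:heis} requires us to fix the ideal appropriately.

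The main obstacle is matching the normal forms in subcases (b)(i)--(iii) with the Boucetta--Chakkar list. The hardest point is showing that no residual parameter survives, for instance that the sol1 parameter is forced to $u=0$. We also need to check that the excluded metric (sol6) is exactly the isotropic one, which is consistent with its absence from this statement. We would first try to handle this by computing the automorphism-invariant quantity $g(e_3,e_3)\det D$, together with the causal type of the eigenvectors.
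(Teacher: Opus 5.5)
Your strategy matches the paper's, but the two $\mathfrak e(1,1)$ identifications are swapped and, as written, would fail. The shared steps are: apply Proposition~\ref{prop:Dirac_almost_Abelian} with $\mathrm{tr}(D)=0$, note that the volume element acts as a nonzero scalar so harmonicity becomes $\varepsilon_1d_{12}=\varepsilon_2d_{21}$, and split by the signature of $g|_{\mathfrak h}$, with three eigenvalue subcases when $\mathfrak h$ is Lorentzian. Reading the condition as $g|_{\mathfrak h}$-self-adjointness of $D$ is a clean way to put it. Your \textup{(ee2)} and \textup{(nil0)} predictions are right; the paper reads these off from $\sigma(\mathrm{Ric}(g))$ via Lemma~\ref{lemma:Ric=-B_Lorentzian_uni} and the table, which is quicker than explicit bases.

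\textbf{The swap.} Every automorphism of $\mathfrak e(1,1)$ preserves $\mathfrak h=[\mathfrak g,\mathfrak g]$. So the signature of $g|_{\mathfrak h}$ is an invariant of the equivalence class. In \textup{(sol1)} the block on $\mathrm{Span}\{y,z\}$ is $\left(\begin{smallmatrix}1&u/v\\u/v&1\end{smallmatrix}\right)$, which is positive definite, and $x\perp\mathfrak h$ is timelike. In \textup{(sol4)} the block is $\mathrm{diag}(-1,1)$ and $x$ is spacelike.
\begin{enumerate}
\item In your Case (a), take $x=e_3/t$ and $y,z$ unit eigenvectors. The Gram matrix is $\mathrm{diag}(-1/t^2,1,1)$. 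This is \textup{(sol1)} with $u=0$, $v=2t$, and it is not equivalent to any \textup{(sol4)} metric.
\item In Case (b)(i), put $t=\sqrt{p^2-q^2}$ and apply, if necessary, the automorphism $x\mapsto -x$, $y\leftrightarrow z$. You get $\mathrm{diag}(1/t^2,-1,1)$. This is \textup{(sol4)} with $u=t^2$, not $u=1/t^2$, and it is not a \textup{(sol1)} metric.
\end{enumerate}
This also removes what you called the hardest point. The value $u=0$ in \textup{(sol1)} is automatic, because eigenvectors of a symmetric $D$ on a Euclidean plane are orthogonal. Your invariant $g(e_3,e_3)\det D$ would have caught the swap. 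It equals $t^2>0$ in Case (a) and $v^2/4$ on \textup{(sol1)}, but $-(p^2-q^2)<0$ in Case (b)(i) and $-u$ on \textup{(sol4)}. The union of the two families is unchanged, so the statement still follows once the labels are exchanged.

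\textbf{The remark on \textup{(sol6)}.} Your claim that \textup{(sol6)} ``is exactly the isotropic one'' is false, although nothing in the argument needs it. In \textup{(sol6)} the ideal $\mathfrak h=\mathrm{Span}\{y,z\}$ has Gram matrix $\left(\begin{smallmatrix}u&1\\1&0\end{smallmatrix}\right)$ with determinant $-1$. So $\mathfrak h$ is Lorentzian and $\mathfrak h^\perp=\mathbb R x$ is spacelike, which makes \textup{(sol6)} non-isotropic. It is excluded only because $\mathrm{ad}(x)|_{\mathfrak h}$ is not self-adjoint: $g([x,y],z)=1\neq-1=g(y,[x,z])$. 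The isotropic metric on $\mathfrak e(1,1)$ is \textup{(sol7)}, which is treated in Proposition~\ref{prop:3d_UAA_isotropic}. In any case no such check is required, since your forward direction already shows that every metric outside the list fails.
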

\begin{proof}
Let $\{e_1,e_2,e_3\}$ be an orthonormal basis of $(\mathfrak g,g)$ satisfying $\varepsilon_1=\varepsilon_2=-\varepsilon_3=1$, $\mathfrak h=\mathrm{Span}_{\mathbb R}\{e_1,e_2\}$, and 
\[D
=\mathrm{ad}(e_3)|_{\mathfrak h}=
\begin{pmatrix}
d_{11}&d_{12}\\
d_{21}&-d_{11}
\end{pmatrix}
\] 
for $d_{ij}\in\mathbb R$.
Note that $\mathrm{tr}(D)=0$ since $\mathfrak g$ is unimodular. 
Let $\psi\in\Sigma$ be a non-zero left-invariant spinor. 
By Proposition \ref{prop:Dirac_almost_Abelian} we have 
\[
\slashed{D}\psi=
-\frac{1}{4}(d_{12}-d_{21})(e^1\wedge e^2\wedge e^3)\psi=\frac{i}{4}(d_{12}-d_{21})\psi=0
\] 
if and only $d_{21}=d_{12}$. 
That is, the endomorphism $D$ is equivalent to
\[\sqrt{d_{11}^2+d_{12}^2}
\begin{pmatrix}
-1&0\\
0&1
\end{pmatrix},
\] 
which implies that $\mathfrak g$ is completely solvable, therefore $\mathfrak g\cong\mathfrak{e}(1,1)$. 
A computation shows that in this situation we have $\sigma(\mathrm{Ric}(g))=(0,1,2)$. 
Set $t\coloneqq\sqrt{d_{11}^2+d_{12}^2}>0$ and choose $\theta\in\mathbb R$ such that $\cos(2\theta)=d_{11}/t$ and $\sin(2\theta)=d_{12}/t$. 
Take 
\[x=\frac{1}{t}e_3,\qquad y=\cos(\theta)e_1+\sin(\theta)e_2,\qquad z=-\sin(\theta)e_1+\cos(\theta)e_2.\]
The basis $\{x,y,z\}$ satisfies $[x,y]=y$, $[z,x]=z$, and $[z,y]=0$. 
The matrix of the metric $g$ in this new basis is 
\[
\begin{pmatrix}
-1/t^2&0&0\\
0&1&0\\
0&0&1
\end{pmatrix},
\] which gives the metric $\textup{(sol1)}$ with $u=0$ and $v=2t$. 
Note that in this case the metric $g$ restricted to $\mathfrak h$ is positive-definite.
    
We also have to consider the case where the metric $g$ restricted to the Abelian ideal $\mathfrak h$ is Lorentzian, that is, the case where $\{e_1,e_2,e_3\}$ is an orthonormal basis of $(\mathfrak g,g)$ satisfying $\varepsilon_1=-\varepsilon_2=\varepsilon_3=1$. Note that, in order to be compatible with the signature, the representation of $\mathrm{Cl}(2,1)$ has to be chosen as follows: 
\[
e^1=
\begin{pmatrix}
0&i\\
i&0
\end{pmatrix},
\qquad 
e^2=
\begin{pmatrix}
1&0\\
0&-1
\end{pmatrix},
\qquad 
e^3=\begin{pmatrix}
0&1\\
-1&0
\end{pmatrix}.
\]
In this situation, by Proposition \ref{prop:Dirac_almost_Abelian} we have 
\[\slashed{D}\psi=-\frac{1}{4}(d_{12}+d_{21})(e^1\wedge e^2\wedge e^3)\psi=-\frac{i}{4}(d_{12}+d_{21})\psi=0\] 
if and only if $d_{21}=-d_{12}$. 
That is, the endomorphism $D$ is equivalent to 
\[
\sqrt{d_{11}^2-d_{12}^2}
\begin{pmatrix}
-1&0\\
0&1
\end{pmatrix}.
\]
Set $t\coloneqq\sqrt{d_{11}^2-d_{12}^2}$. 
A computation shows that in this situation we have
\[\mathrm{Ric}(g)=
\begin{pmatrix}
0&0&0\\
0&0&0\\
0&0&-2t^2
\end{pmatrix}.
\]
We need to consider three different cases.

\textbf{Case $d_{11}^2-d_{12}^2>0$.} 
In this case $\mathfrak g\cong\mathfrak{e}(1,1)$ since $\mathfrak g$ is completely solvable, and $\sigma(\mathrm{Ric}(g))=(0,1,2)$. 
Choose $\theta\in\mathbb R$ such that $\cosh(2\theta)=d_{11}/t$ and $\sinh(\theta)=d_{12}/t$. 
Take 
\[x=-\frac{1}{t}e_3,\qquad y=\sinh(\theta)e_1+\cosh(\theta)e_2,\qquad z=\cosh(\theta)e_1+\sinh(\theta)e_2.\]
The basis $\{x,y,z\}$ satisfies $[x,y]=y$, $[z,x]=z$, and $[z,y]=0$. 
The matrix of the metric $g$ in this new basis is 
\[
\begin{pmatrix}
1/t^2&0&0\\
0&-1&0\\
0&0&1
\end{pmatrix},
\] 
which gives the metric $\textup{(sol4)}$ with $u=t^2$.

\textbf{Case $d_{11}^2-d_{12}^2<0$.} 
In this case $\mathfrak g\cong\mathfrak{e}(2)$ since $\mathfrak g$ is not completely solvable, and $\sigma(\mathrm{Ric}(g))=(1,0,2)$. 
Therefore, the metric $g$ is equivalent to $\textup{(ee2)}$ with $u=-v<0$.

\textbf{Case $d_{11}^2-d_{12}^2=0$.} 
In this case $\mathfrak g\cong\mathfrak{heis}_3$ since $\mathfrak g$ is nilpotent, and $\sigma(\mathrm{Ric}(g))=(0,0,3)$. 
Therefore, the metric $g$ is equivalent to $\textup{(nil0)}$.
\end{proof}

\begin{proposition}
\label{prop:3d_UAA_isotropic}
Let $(\mathfrak g=\mathfrak h\rtimes_D\mathbb R,g)$ be a unimodular three-dimensional isotropic almost Abelian Lorentzian Lie algebra. 
Then it admits left-invariant harmonic spinors if and only if one of the following cases holds.
\begin{enumerate}
\item $\mathfrak g\cong\mathfrak{e}(1,1)$, and $g$ is equivalent to $\textup{(sol7)}$.
\item $\mathfrak g\cong\mathfrak{heis}_3$, and $g$ is equivalent to $\textup{(nil0)}$.
\end{enumerate}
\end{proposition}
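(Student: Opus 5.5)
We will apply Proposition \ref{prop:Dirac_almost_Abelian_isotropic} with $n=3$. In this setting $\mathfrak h=\mathrm{Span}_{\mathbb R}\{v_1,v_2\}$, where $v_2$ spans $\mathfrak h^\perp\subset\mathfrak h$, and $g=\varepsilon_1 v^1\otimes v^1+v^2\odot v^3$. Since $g$ is Lorentzian and the pair $\{v_2,v_3\}$ already contributes signature $(1,1)$, we must have $\varepsilon_1=1$. Unimodularity gives $\mathrm{tr}(D)=d_{11}+d_{22}=0$, so the last term of the formula vanishes. The sum over $1\le i<j\le n-2$ is empty. Hence
\[\slashed{D}\psi=\tfrac14 d_{12}\,(e^1\wedge e^2\wedge e^3)\psi .\]
The three-form acts on $\Sigma=\mathbb C^2$ as a non-zero scalar multiple of the identity (it is $\pm i\,\mathrm{Id}$, as in the proof of Proposition \ref{prop:3d_UAA_non_isotropic}). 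Therefore left-invariant harmonic spinors exist if and only if $d_{12}=0$, i.e.\ $[v_3,v_2]=d_{22}v_2$, and in that case every left-invariant spinor is harmonic.

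Next we identify the Lie algebra. With $d_{12}=0$ and $d_{22}=-d_{11}$, the matrix of $D$ in the basis $\{v_1,v_2\}$ is lower triangular with diagonal entries $d_{11},-d_{11}$ and off-diagonal entry $d_{21}$. There are two cases.

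\textbf{Case $d_{11}\neq0$.} Then $D$ is diagonalisable with real eigenvalues $\pm d_{11}$, so $\mathfrak g\cong\mathfrak e(1,1)$.

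\textbf{Case $d_{11}=0$.} Then $D$ is nilpotent and non-zero (so $d_{21}\neq0$, as $\mathfrak g$ is non-Abelian), and $\mathfrak g\cong\mathfrak{heis}_3$.

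As a consistency check, Lemma \ref{lemma:Ric=-B_Lorentzian_uni} gives $\sigma(\mathrm{Ric}(g))=(0,1,2)$ in the first case and $(0,0,3)$ in the second.

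For the metrics we use the classification tables. In the Heisenberg case, $(\textup{nil0})$ is the only metric in Table \ref{table:non-simple-algebras-lorentzian-metrics} with Ricci signature $(0,0,3)$, so $g$ is equivalent to $(\textup{nil0})$.

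In the $\mathfrak e(1,1)$ case, we construct the equivalence with $(\textup{sol7})$ explicitly. The idea is to take a $\mathfrak{sol}$-type basis $x,y,z$ in which $x$ is proportional to $v_3$ up to adding an element of $\mathfrak h$, and $y,z$ are eigenvectors of $D$ in $\mathfrak h$. The vector $v_2$ is an eigenvector of $D$ with eigenvalue $-d_{11}$, and it is null and orthogonal to $\mathfrak h$. The other eigenvector, $w=v_1+c\,v_2$ with $c=d_{21}/(2d_{11})$, spans $\ker(D-d_{11})$ and satisfies $g(w,w)=1$. After rescaling $v_3$ by $1/d_{11}$ and adding to it a suitable multiple of $w$ so that it becomes orthogonal to $w$, and then adjusting $v_3$ and $v_2$ by the scaling automorphisms in the eigenbasis to normalise the remaining pairing, the Gram matrix should become the antidiagonal form of $(\textup{sol7})$, possibly after reordering basis vectors to match the bracket conventions of Table \ref{table:Lorentzian_uni}.

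The main obstacle is this last normalisation. We have to make sure that the adjusted $v_3$ stays null after modification, that is, that we can simultaneously kill $g(v_3,v_3)$ and $g(v_3,w)$ by adding multiples of $w$ and $v_2$. We also have to keep track of the sign and scale conventions of $(\textup{sol7})$ against the brackets $[x,y]=y$, $[z,x]=z$. If $(\textup{sol7})$ is instead characterised in the Appendix by invariants, we will fall back on that: the Ricci signature together with the degeneracy of $\mathfrak h^\perp$ already rules out $(\textup{sol1})$, $(\textup{sol4})$ and $(\textup{sol6})$, by the non-isotropic analysis of Proposition \ref{prop:3d_UAA_non_isotropic} and the preceding proposition on $\mathfrak e(1,1)$. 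The converse direction follows by reading the above computations backwards.
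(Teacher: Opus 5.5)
Your reduction is the same as the paper's. You get $\slashed{D}\psi=\frac14 d_{12}(e^1\wedge e^2\wedge e^3)\psi$, so harmonicity is equivalent to $d_{12}=0$. The split $d_{11}\neq0$ / $d_{11}=0$ then gives $\mathfrak e(1,1)$ / $\mathfrak{heis}_3$, and \textup{(nil0)} is singled out by Ricci signature $(0,0,3)$. You diverge from the paper only in identifying \textup{(sol7)}.

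The normalisation you call the main obstacle goes through directly. With $c=d_{21}/(2d_{11})$ and $w=v_1+cv_2$, put
\[
x=\frac{1}{d_{11}}v_3-\frac{c}{d_{11}}v_1-\frac{c^2}{2d_{11}}v_2,\qquad y=w,\qquad z=d_{11}v_2.
\]
Then $g(x,x)=g(x,y)=g(y,z)=g(z,z)=0$ and $g(x,z)=g(y,y)=1$, while $[x,y]=y$, $[z,x]=z$, $[z,y]=0$. So $g$ is exactly \textup{(sol7)} in this basis, with no reordering or sign issues. This is the paper's own basis. Its displayed $x$ should read $\frac{1}{x_1}v_3-\frac{x_2}{2x_1^2}v_1-\frac{x_2^2}{8x_1^3}v_2$, and it writes \textup{(sol4)} where \textup{(sol7)} is meant.

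Your fallback is a genuinely different, computation-free route, and it is correct. Justify it directly, though, rather than through the non-isotropic analysis. Since $\mathfrak h$ contains $[\mathfrak g,\mathfrak g]$, which is two-dimensional for $\mathfrak e(1,1)$, we have $\mathfrak h=[\mathfrak g,\mathfrak g]$. Hence every isomorphism preserves $\mathfrak h$, and degeneracy of $g|_{\mathfrak h}$ is invariant under equivalence. The metrics in Table \ref{table:non-simple-algebras-lorentzian-metrics} with Ricci signature $(0,1,2)$ are \textup{(sol1)} with $u=0$, \textup{(sol4)}, \textup{(sol6)} with $u>0$, and \textup{(sol7)}. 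Of these, only \textup{(sol7)} restricts degenerately to $\mathrm{Span}\{y,z\}$.

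The two routes trade off differently. The explicit basis is self-contained and needs neither the Boucetta--Chakkar classification nor Lemma \ref{lemma:Ric=-B_Lorentzian_uni}. The invariant argument avoids all computation but depends on both.

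For the converse under the fallback, note that $d_{12}=0$ means exactly that the null line $\mathfrak h^\perp$ is an ideal. In \textup{(sol7)}, $\mathfrak h^\perp=\mathrm{Span}\{z\}$, which is indeed an ideal.
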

\begin{proof}
Let $\{v_1,v_2,v_3\}$ be a basis of $\mathfrak g$ with $\mathfrak h=\mathrm{Span}_{\mathbb R}\{v_1,v_2\}$ and 
\[D=\mathrm{ad}(v_3)|_{\mathfrak h}=
\begin{pmatrix}
d_{11}&d_{12}\\
d_{21}&-d_{11}
\end{pmatrix}
\] for some $d_{ij}\in\mathbb R$. 
Note that $\mathrm{tr}(D)=0$ since $\mathfrak g$ is unimodular. 
The metric $g$ in this basis takes the form $g=v^1\otimes v^1+v^2\odot v^3$. 
Let 
\[e_1=v_1, \qquad e_2=\frac{\sqrt{2}}{2}(v_2+v_3), \qquad e_3=\frac{\sqrt{2}}{2}(v_2-v_3).\] 
Then $\{e_1,e_2,e_3\}$ is an orthonormal basis of $(\mathfrak g,g)$ with $\varepsilon_1=\varepsilon_2=-\varepsilon_3=1$. 
Let $\psi\in\Sigma$ be a non-zero left-invariant spinor. 
In this situation, by Proposition \ref{prop:Dirac_almost_Abelian_isotropic} we have
\[
\slashed{D}\psi=\frac{1}{4}d_{12}(e^1\wedge e^2\wedge e^3)\psi=-\frac{i}{4}d_{12}\psi=0\] 
if and only if $d_{12}=0$. 
Set $x_1\coloneqq d_{11}$ and $x_2\coloneqq d_{21}$. 
A computation shows that in this situation we have
\[\mathrm{Ric}(g)
=
\begin{pmatrix}
0&0&0\\
0&0&0\\
0&0&-2x_1^2
\end{pmatrix}
\] 
with respect to the basis $\{v_1,v_2,v_3\}$. 
We need to consider two different cases.

\textbf{Case $x_1\neq0$.} 
In this case $\mathfrak g\cong\mathfrak{e}(1,1)$ since $\mathfrak g$ is completely solvable, and $\sigma(\mathrm{Ric}(g))=(0,1,2)$. 
Take
\[x=\frac{1}{x_1}v_1-\frac{x_2}{2x_1^2}v_1-\frac{x_2^2}{8x_1^3}v_3,\qquad y=v_1+\frac{x_2}{2x_1}v_2,\qquad z=x_1v_2.\]
The basis $\{x,y,z\}$ satisfies $[x,y]=y$, $[z,x]=z$, and $[z,y]=0$. 
The matrix of the metric $g$ in this new basis is 
\[
\begin{pmatrix}
0&0&1\\
0&1&0\\
1&0&0
\end{pmatrix},
\] 
which is the metric $\textup{(sol4)}$.
    
\textbf{Case $x_1=0$.} 
In this case $\mathfrak g\cong\mathfrak{heis}_3$ since $\mathfrak g$ is nilpotent, and $\sigma(\mathrm{Ric}(g))=(0,0,3)$. 
Therefore, the metric $g$ is equivalent to $\textup{(nil0)}$.
\end{proof}

\subsection{Non-unimodular case}

Unlike in the previous case, in the non-unimodular one we do not have a priori linear restrictions on the coefficients $a_{ij},b_{ij},c_{ij}$, $1\leq i<j\leq3$.
Hence, we would have to find coefficients solving the quadratic equations \eqref{eq:d^2=0} and \eqref{eq:Dirac_Lorentzian}. 
However, every non-unimodular three-dimensional Lie algebra is almost Abelian, hence we can apply Proposition \ref{prop:Dirac_almost_Abelian} and Proposition \ref{prop:Dirac_almost_Abelian_isotropic} to classify those metrics carrying left-invariant harmonic spinors. 
We will proceed as in Propositions \ref{prop:3d_UAA_non_isotropic} and \ref{prop:3d_UAA_isotropic} and compare our results with the classifications obtained in \cite{ha-lee3}. 
In the arXiv version of the latter paper \cite{ha-lee2}, one can find useful tables with more information about the curvature of the Lorentzian metrics. 
We will use that information in the following proofs. 
We next state the main classification results.

\begin{theorem}[Ha--Lee \cite{ha-lee3}]
\label{thm:Ha_Lee_RH3}
Let $\mathbb R^2\rtimes_{\mathrm{Id}}\mathbb R$ be spanned by $\{x,y,z\}$ satisfying
\[[x,y]=0,\qquad [z,x]=x,\qquad [z,y]=y.\]
Any Lorentzian metric on $\mathbb R^2\rtimes_{\mathrm{Id}}\mathbb R$ is equivalent to a metric whose associated matrix with respect to the basis $\{x,y,z\}$ is of the form
\[
\begin{pmatrix}
1&0&0\\
0&-\epsilon&0\\
0&0&\epsilon\mu
\end{pmatrix},
\qquad
\begin{pmatrix}
1&0&0\\
0&0&1\\
0&1&0
\end{pmatrix},
\] where $\epsilon=\pm1$ and $\mu>0$.
\end{theorem}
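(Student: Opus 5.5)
The plan is to normalise a given Lorentzian metric $g$ on $\mathfrak g=\mathbb R^2\rtimes_{\mathrm{Id}}\mathbb R$ by applying automorphisms, i.e.\ via $g\mapsto A^TgA$ with $A\in\mathrm{Aut}(\mathfrak g)$, exactly as in the proof of the classification on $\mathfrak{aff}(\mathbb R)$.

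\textbf{Step 1: the automorphism group.} Since $[\mathfrak g,\mathfrak g]=\mathfrak h=\mathrm{Span}\{x,y\}$, every automorphism preserves $\mathfrak h$. It must therefore send $z\mapsto \alpha x+\beta y+cz$. The relation $[Az,Ax]=Ax$ forces $c=1$. On $\mathfrak h$ the restriction is an arbitrary $P\in\mathrm{GL}(2,\mathbb R)$, because $\mathrm{ad}(z)|_{\mathfrak h}=\mathrm{Id}$ commutes with everything. So
\[
\mathrm{Aut}(\mathfrak g)=\left\{\begin{pmatrix}P&w\\0&1\end{pmatrix}\;\middle|\;P\in\mathrm{GL}(2,\mathbb R),\ w\in\mathbb R^2\right\}.
\]

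\textbf{Step 2: split by the type of $g|_{\mathfrak h}$.} Write
\[
g=\begin{pmatrix}S&s\\ s^T&\sigma\end{pmatrix}, \qquad S=g|_{\mathfrak h}.
\]
Then
\[
A^TgA=\begin{pmatrix}P^TSP & P^T(Sw+s)\\ (Sw+s)^TP & w^TSw+2s^Tw+\sigma\end{pmatrix}.
\]

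\emph{Non-degenerate case.} If $S$ is non-degenerate, choose $w=-S^{-1}s$ to kill the off-diagonal block; the bottom-right entry becomes $\sigma-s^TS^{-1}s\neq0$. Then use $P$ to bring $S$ to $\mathrm{diag}(1,1)$, $\mathrm{diag}(-1,-1)$ or $\mathrm{diag}(1,-1)$. Lorentzian signature forces the cases $\mathrm{diag}(1,1,-\mu)$, $\mathrm{diag}(1,-1,\mu)$ and $\mathrm{diag}(-1,-1,\mu)$ with $\mu>0$.

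\textbf{Step 3: the remaining normalisations.} The diagonal entries on $\mathfrak h$ can no longer be rescaled independently, since $P$ has been used up by Sylvester. However, the case $\mathrm{diag}(-1,-1,\mu)$ does not appear in the target list, because the target list always has an entry $1$ in the $(1,1)$ position. The point is that we may not be allowed to rescale the whole metric, as only automorphisms are permitted. I expect this case to be the main subtlety.

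To resolve it, I would check whether $\mathrm{diag}(-1,-1,\mu)$ is equivalent to something else or is genuinely excluded. Likely the statement implicitly allows the sign in the form $\mathrm{diag}(1,-\epsilon,\epsilon\mu)$, which covers $\mathrm{diag}(1,1,-\mu)$ ($\epsilon=-1$) and $\mathrm{diag}(1,-1,\mu)$ ($\epsilon=1$). I would then verify that $\mathrm{diag}(-1,-1,\mu)$ has signature $(1,2)$, i.e.\ two negative directions and one positive one. That is Lorentzian only under the opposite sign convention, so under the paper's convention (one timelike direction) it is excluded. This settles the non-degenerate case.

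\emph{Degenerate case.} If $S$ is degenerate, it has rank one, since rank zero contradicts non-degeneracy of $g$. Choose $P$ so that $S=\mathrm{diag}(1,0)$, or $\mathrm{diag}(-1,0)$ with the signature then forcing a contradiction or an adjustment. Then $s$ must have nonzero second component, which can be scaled to $1$ using $P$. Next use $w$ to kill $s_1$ and the entry $\sigma$: the expression $w^TSw+2s^Tw+\sigma$ is linear in $w_2$, so it can be made zero. This yields the second matrix.

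Finally, I would note that distinct values of $\mu$ are inequivalent. One route is an invariant such as the normalised value of $g$ on the complement of $\mathfrak h$ relative to $g|_{\mathfrak h}$, which is scale-fixed because $c=1$. Alternatively, one can cite \cite{ha-lee3} for inequivalence.
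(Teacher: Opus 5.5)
Your proof is correct for the statement as written, which only asserts that every Lorentzian metric is equivalent to one of the listed forms. The paper gives no argument of its own here; it imports the result from Ha--Lee. Your argument is therefore a self-contained replacement, and it uses the same device the paper applies to $\mathfrak{aff}(\mathbb R)$: compute $\mathrm{Aut}(\mathfrak g)=\bigl\{\left(\begin{smallmatrix}P&w\\0&1\end{smallmatrix}\right)\bigr\}$, where $P\in\mathrm{GL}(2,\mathbb R)$ is arbitrary because $\mathrm{ad}(z)|_{\mathfrak h}=\mathrm{Id}$, and then normalise $g\mapsto A^TgA$. The citation gives the paper the full orbit classification together with the curvature data it relies on later. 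Your route gives transparency: it shows the only surviving data are the type of $g|_{\mathfrak h}$ and, when $g|_{\mathfrak h}$ is non-degenerate, the number $g(\tilde z,\tilde z)=\det g/\det(g|_{\mathfrak h})$. Here $\tilde z$ is the unique vector of $z+\mathfrak h$ orthogonal to $\mathfrak h$. This invariant also proves that distinct values of $\mu$ are inequivalent, although the statement does not require that.

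A few points need tidying. First, $\mathrm{diag}(-1,-1,\mu)$ should not be listed among the Lorentzian cases at all. It has signature $(1,2)$, whereas the paper's convention, visible in every matrix of the target list, is $(2,1)$, so there is no real subtlety in your Step 3. Second, in the degenerate case, $g|_{\mathfrak h}=\mathrm{diag}(-1,0)$ is excluded outright rather than "adjusted". Replacing $z$ by $z+s_1x$ makes $x$ orthogonal to the hyperbolic plane $\mathrm{Span}\{y,z+s_1x\}$, which gives signature $(1,2)$. Third, to keep $g|_{\mathfrak h}=\mathrm{diag}(1,0)$ while normalising $s_2$, you should specify $P=\mathrm{diag}(1,1/s_2)$. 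Then $w=\bigl(-s_1,(s_1^2-\sigma)/(2s_2)\bigr)$ produces exactly the second matrix.
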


\begin{theorem}[Ha--Lee \cite{ha-lee3}]
\label{thm:Ha_Lee_g(c)_c>1}
Let $c>1$ and let $\mathfrak g(c)$ be spanned by $\{x,y,z\}$ satisfying
\[[x,y]=0,\qquad [z,x]=y,\qquad [z,y]=-cx+2y.\]
Any Lorentzian metric on $\mathfrak g(c)$ is equivalent to a metric whose associated matrix with respect to the basis $\{x,y,z\}$ is of the form 
\[
\begin{pmatrix}
\mu&0&0\\
0&0&1\\
0&1&0
\end{pmatrix},
\qquad
\begin{pmatrix}
1&1&0\\
1&\tau&0\\
0&0&\mu
\end{pmatrix},
\qquad
\begin{pmatrix}
1&1&0\\
1&\nu&0\\
0&0&-\mu
\end{pmatrix},
\] 
where $\mu>0$, $\tau<1$, and $1<\nu\leq c$.
\end{theorem}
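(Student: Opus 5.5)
The plan is to compute $\mathrm{Aut}(\mathfrak g(c))$ explicitly. Then I would split Lorentzian metrics according to whether the restriction of $g$ to the Abelian ideal $\mathfrak h=\mathrm{Span}_{\mathbb R}\{x,y\}$ is degenerate (the isotropic case of Section \ref{subsec:gen-formulas}) or not. Finally I would normalise what remains using the residual automorphisms.

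\textbf{Automorphisms.} Here $D=\mathrm{ad}(z)|_{\mathfrak h}$ has matrix $\left(\begin{smallmatrix}0&-c\\1&2\end{smallmatrix}\right)$, whose eigenvalues are $1\pm i\sqrt{c-1}$. Since $D$ is invertible, $[\mathfrak g,\mathfrak g]=\mathfrak h$, so every automorphism $A$ preserves $\mathfrak h$.

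Write $Az=\lambda z+w$ with $w\in\mathfrak h$ and $P=A|_{\mathfrak h}$. The automorphism condition is $PD=\lambda DP$. Comparing traces gives $2=2\lambda$, so $\lambda=1$, and then $P$ lies in the centraliser of $D$. Writing $D=\mathrm{Id}+\sqrt{c-1}\,J$ with $J^2=-\mathrm{Id}$, this centraliser is $\{\alpha\,\mathrm{Id}+\beta J\}\setminus\{0\}\cong\mathbb C^*$. So $\mathrm{Aut}(\mathfrak g(c))$ consists of the maps
\[
z\mapsto z+w,\qquad w\in\mathfrak h \text{ arbitrary},
\]
combined with "complex multiplications" on $\mathfrak h$. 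In particular $g(z,z)$ can never be rescaled.

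\textbf{Isotropic case.} If $g|_{\mathfrak h}$ is degenerate, it has rank one and its kernel is a null line $\ell\subset\mathfrak h$. Using $\mathbb C^*$, which acts transitively on $\mathfrak h\setminus\{0\}$, I would send a generator of $\ell$ to $y$, keeping one real scaling freedom. Since the stabiliser of the line $\ell$ in $\mathbb C^*$ is only $\mathbb R^*$, the value $g(x,x)=\mu>0$ survives as a genuine parameter; it is positive because the $\mathfrak h$ part of a Lorentzian metric is positive semidefinite here. The freedom $z\mapsto z+w$ with $w\in\mathfrak h$ is then used to kill $g(x,z)$ and $g(z,z)$ and to scale $g(y,z)$ to $1$. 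This gives the first matrix.

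\textbf{Non-degenerate case.} If $g|_{\mathfrak h}$ is non-degenerate, I would choose $w$ so that $z\perp\mathfrak h$. Then $g=g|_{\mathfrak h}\oplus g(z,z)$, and $g|_{\mathfrak h}$ is either Lorentzian with $g(z,z)=\mu>0$, or positive definite with $g(z,z)=-\mu<0$; negative definite is excluded by the signature.

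The remaining task is to classify quadratic forms on $\mathfrak h$ modulo $\mathbb C^*$, which is a one-parameter family since $3-2=1$. I would look for $x'=Px$ with $g(x',x')=1$ and $g(x',Dx')=1$; since $Dx'=PDx=Py$, this exactly brings the Gram matrix to $\left(\begin{smallmatrix}1&1\\1&\tau\end{smallmatrix}\right)$ with $\tau=g(Dx',Dx')$. To solve this, consider the function
\[
f(v)=\frac{g(v,Dv)}{g(v,v)}
\]
on the circle of directions where $g(v,v)>0$, and show that $f$ attains the value $1$.
\begin{itemize}
\item Lorentzian $g|_{\mathfrak h}$: one needs the Gram determinant $\tau-1<0$, i.e.\ $\tau<1$. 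The cone $\{g(v,v)>0\}$ should give enough room, because $f$ is unbounded near the null directions.
\item Positive definite $g|_{\mathfrak h}$: we get $\tau>1$, and the finitely many solutions $x'$ of $f(x')=1$ give several values of $\tau$.
\end{itemize}

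\textbf{Expected main obstacle.} The hardest step is the positive definite case: proving that $f$ takes the value $1$, and that the solutions come in a pair whose $\tau$-values are related by an involution (I expect $\nu\leftrightarrow c/\nu$ or similar, via the relation $D^2=2D-c\,\mathrm{Id}$). This would let one choose the representative with $1<\nu\le c$. I would check it by writing $g|_{\mathfrak h}$ in the $J$-adapted complex coordinate, where $f$ becomes a trigonometric expression in one angle and the two roots can be computed explicitly.

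Last, I would prove inequivalence of distinct normal forms via invariants: the degeneracy of $g|_{\mathfrak h}$, the sign of the signature of $g|_{\mathfrak h}$, $\mu$ (since $\lambda=1$), and the conformal invariant of $g|_{\mathfrak h}$ under $\mathbb C^*$, which determines $\tau$ in the Lorentzian case and $\nu$ once it is normalised to $(1,c]$.
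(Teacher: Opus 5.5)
The paper gives no proof of this statement; it is quoted from Ha--Lee, so there is no internal argument to compare with. Judged on its own, your strategy works and would give a self-contained proof: compute $\mathrm{Aut}(\mathfrak g(c))\cong\mathfrak h\rtimes\mathbb C^*$ with $\lambda=1$, split according to the rank of $g|_{\mathfrak h}$, then classify forms on $\mathfrak h$ modulo $\mathbb C^*$. However, three steps are wrong or unsupported as written.

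First, in the isotropic case, $g(y,z+w)=g(y,z)$ because $y$ spans the kernel of $g|_{\mathfrak h}$. So $z\mapsto z+w$ cannot normalise $g(y,z)$. The residual $\mathbb R^*\subset\mathbb C^*$ multiplies $g(y,z)$ by $r$ and $g(x,x)$ by $r^2$. You must spend it on $g(y,z)=1$, and only then is $\mu=g(x,x)$ an invariant. Indeed, an automorphism preserving the normal form has $Py\in\mathbb R y$, hence $P=r\,\mathrm{Id}$, and $g(y,z)=1$ then forces $r=1$.

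Second, the non-degenerate case becomes transparent once you write $D=\mathrm{Id}+\sqrt{c-1}\,J$ with $J^2=-\mathrm{Id}$. Then $f(v)=1$ if and only if $h(v)\coloneqq g(v,Jv)=0$. For $g(v,v)=1$ this gives $\tau=g(Dv,Dv)=1+(c-1)\,g(Jv,Jv)$, and moreover $h(Jv)=-h(v)$.

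In the Lorentzian case, unboundedness of $f$ near the null directions is not enough, since both ends could tend to $+\infty$; you need opposite signs. Take null vectors $n_1,n_2$ with $g(n_1,n_2)=1$. Since $\mathrm{tr}\,J=0$, write $Jn_1=\alpha n_1+\beta n_2$ and $Jn_2=\gamma n_1-\alpha n_2$. Then $\det J=1$ gives $h(n_1)h(n_2)=\beta\gamma=-(1+\alpha^2)<0$, so $h(n_1+tn_2)$ has exactly one root $t>0$. This yields existence and also uniqueness of the spacelike solution direction. Uniqueness is what your inequivalence claim needs, since it makes $\tau$ an invariant. Finally, $Jv\perp v$ is timelike, so $\tau<1$.

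Third, in the positive definite case your ``finitely many solutions'' fails when $J$ is $g$-orthogonal. Then $h\equiv0$, every direction is a solution, and $\nu=c$. Otherwise $h$ is $g$-traceless, because $\mathrm{tr}_g h=\mathrm{tr}\,J=0$, so it has exactly the two zero directions $v$ and $Jv$. With $g(v,v)=1$ and $g(Jv,Jv)=k^2$ you get $\nu_1=1+(c-1)k^2$ and $\nu_2=1+(c-1)k^{-2}$, that is $(\nu_1-1)(\nu_2-1)=(c-1)^2$. The fixed point of this involution is $c$, which is exactly why the normal form has $1<\nu\le c$. Your guess $\nu\leftrightarrow c/\nu$ has fixed point $\sqrt c$ and would give the wrong range. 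Using the unordered pair $\{\nu_1,\nu_2\}$ as the invariant then proves that distinct $\nu\in(1,c]$ give inequivalent metrics.
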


\begin{theorem}[Ha--Lee \cite{ha-lee3}]
\label{thm:Ha_Lee_g(1)}
Let $\mathfrak g(1)$ be spanned by $\{x,y,z\}$ satisfying
\[[x,y]=0,\qquad [z,x]=x,\qquad [z,y]=x+y.\]
Any Lorentzian metric on $\mathfrak g(1)$ is equivalent to a metric whose associated matrix with respect to the basis $\{x,y,z\}$ is of the form
\begin{alignat*}{3}
&
\begin{pmatrix}
0&0&1\\
0&\mu&0\\
1&0&0
\end{pmatrix},
\qquad &&
\begin{pmatrix}
\mu&0&0\\
0&0&1\\
0&1&0
\end{pmatrix},
\qquad &&
\begin{pmatrix}
1&0&0\\
0&-\nu&0\\
0&0&\mu
\end{pmatrix},
\\
&
\begin{pmatrix}
1&0&0\\
0&\nu&0\\
0&0&-\mu
\end{pmatrix},\qquad &&
\begin{pmatrix}
-1&0&0\\
0&\nu&0\\
0&0&\mu
\end{pmatrix},
\qquad &&
\begin{pmatrix}
0&\epsilon&0\\
\epsilon&0&0\\
0&0&\mu
\end{pmatrix},
\end{alignat*} 
where $\epsilon=\pm 1$, $\mu>0$ and $\nu>0$.
\end{theorem}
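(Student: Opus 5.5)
This is a classification statement quoted from Ha--Lee \cite{ha-lee3}. To prove it, I would normalise an arbitrary Lorentzian matrix $g$ on $\mathfrak g(1)$ by the action $g\mapsto A^TgA$ of $\mathrm{Aut}(\mathfrak g(1))$, in the same way as was done for $\mathfrak{aff}(\mathbb R)$ above.

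\textbf{Step 1: compute the automorphism group.} The ideal $\mathfrak h=\mathrm{Span}\{x,y\}$ is the nilradical, so it is preserved by every automorphism. On $\mathfrak h$ the derivation $D=\mathrm{ad}(z)|_{\mathfrak h}$ is the Jordan block $\left(\begin{smallmatrix}1&1\\0&1\end{smallmatrix}\right)$. An automorphism sends $z\mapsto \lambda z+w$ with $w\in\mathfrak h$, and its restriction $P$ to $\mathfrak h$ must satisfy $PD=\lambda DP$. Comparing eigenvalues forces $\lambda=1$, and then $P$ commutes with the Jordan block. Hence, with respect to $\{x,y,z\}$,
\[
A=\begin{pmatrix}p&q&r\\0&p&s\\0&0&1\end{pmatrix},\qquad p\neq0,\ q,r,s\in\mathbb R.
\]

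\textbf{Step 2: case analysis on the restriction $g|_{\mathfrak h}$.} Write $g=(g_{ij})$. The orbit normal forms are organised by the line $\mathrm{Span}\{x\}=[\mathfrak g,\mathfrak g]'$, the unique $D$-eigenline, and by the type of $g|_{\mathfrak h}$.

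If $g|_{\mathfrak h}$ is degenerate, its radical in $\mathfrak h$ is either the eigenline $x$ or a different line.
\begin{enumerate}
\item If the radical is $x$, then $g_{11}=g_{12}=0$. Using $r,s$ I kill $g_{23}$ and $g_{33}$, and using $p$ I normalise $g_{13}=1$. This gives the first form, with $\mu=g_{22}$.
\item If the radical is another line, then $g_{11}\neq0$. Using $q$ I kill $g_{12}$, and $r,s$ handle the $z$-column. Lorentzian signature then forces $g_{11}>0$ (the case $g_{11}<0$ would give two negative directions), and this should produce the second form.
\end{enumerate}

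If $g|_{\mathfrak h}$ is non-degenerate, I first use $r,s$ to make $z\perp\mathfrak h$; this is possible since $w\in\mathfrak h$ can absorb $g(z,\cdot)|_{\mathfrak h}$. The remaining freedom is $P=\left(\begin{smallmatrix}p&q\\0&p\end{smallmatrix}\right)$ acting on the binary form $g|_{\mathfrak h}$, and the normal form depends on whether $x$ is null.
\begin{enumerate}
\item If $g_{11}\neq0$, then $q$ removes $g_{12}$ and $p$ scales $|g_{11}|$ to $1$. The sign pattern of the entries $(\pm1,\cdot,\cdot)$ together with the Lorentzian constraint yields the three diagonal forms.
\item If $g_{11}=0$, then $g_{12}\neq0$. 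Here $q$ kills $g_{22}$, $p$ scales $g_{12}$ to $\epsilon=\pm1$, and $g_{33}=\mu$ must be positive. Note that $p^2>0$ cannot change the sign of $g_{12}$, so $\epsilon$ is an invariant.
\end{enumerate}

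\textbf{Step 3: non-equivalence.} Finally I must show that the listed metrics lie in distinct orbits. For this I would use invariants: $g(x,x)$ up to the factor $p^2$, the determinant up to $p^4$, the type of $g|_{\mathfrak h}$, and the sign of $g_{12}$ when $x$ is null.

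The main obstacle is the bookkeeping of scalings. Since $p$ acts by $p^2$ on $g_{11}$, by $p^2$ on $g_{12}$ and through $p^2$ on the $\mathfrak h$-block, while the $z$-coefficient is fixed to $1$, the parameters $\mu$ and $\nu$ survive as genuine moduli. I would verify carefully which quantities can be normalised and which cannot, and check these against the explicit matrices in \cite{ha-lee3}.
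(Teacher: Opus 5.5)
Your proposal is correct and follows the standard route. The paper gives no proof of its own (it quotes Ha--Lee), but normalising $g\mapsto A^TgA$ over the explicitly computed group $\mathrm{Aut}(\mathfrak g(1))$, given by $Ax=px$, $Ay=qx+py$, $Az=rx+sy+z$ with $p\neq0$, is exactly the method the paper uses for $\mathfrak{aff}(\mathbb R)$, and your group contains the automorphisms the paper later applies to $\mathfrak g(1)$.

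There are two cosmetic slips. First, $\mathrm{Span}\{x\}$ is the eigenline $\ker(D-\mathrm{Id})$, not a term of the derived series (here $[\mathfrak g,[\mathfrak g,\mathfrak g]]=\mathfrak h$). Second, in the first normal form the parameter is $\mu=g_{22}/g_{13}^2$ rather than $g_{22}$, and its positivity comes from the signature.
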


\begin{theorem}[Ha--Lee \cite{ha-lee3}]
\label{thm:Ha_Lee_g(c)_c<1}
Let $c<1$ and let $\mathfrak g(c)$ be spanned by $\{x,y,z\}$ satisfying
\[[x,y]=0,\qquad [z,x]=(1+w)x,\qquad [z,y]=(1-w)y,\]
where $w=\sqrt{1-c}$. Any Lorentzian metric on $\mathfrak g(c)$ is equivalent to a metric whose associated matrix with respect to the basis $\{x,y,z\}$ is of the form
\begin{alignat*}{3}
&
\begin{pmatrix}
0&0&1\\
0&1&0\\
1&0&0
\end{pmatrix},
\qquad &&
\begin{pmatrix}
1&0&0\\
0&0&1\\
0&1&0
\end{pmatrix},
\qquad &&
\begin{pmatrix}
1&1&0\\
1&1&\mu\\
0&\mu&0
\end{pmatrix},
\\
&\begin{pmatrix}
1&0&0\\
0&1&0\\
0&0&-\mu
\end{pmatrix},\qquad &&
\begin{pmatrix}
\epsilon&0&0\\
0&-\epsilon&0\\
0&0&\mu
\end{pmatrix},
\qquad &&
\begin{pmatrix}
0&1&0\\
1&0&0\\
0&0&\mu
\end{pmatrix},
\\
&
\begin{pmatrix}
0&1&0\\
1&\epsilon&0\\
0&0&\mu
\end{pmatrix},
\qquad &&
\begin{pmatrix}
1&1&0\\
1&\tau&0\\
0&0&\nu
\end{pmatrix},
\qquad &&
\begin{pmatrix}
-1&1&0\\
1&-\eta&0\\
0&0&\mu
\end{pmatrix},
\end{alignat*} 
where $\epsilon=\pm1$, $\mu>0$, $\nu(\tau-1)<0$, and $\eta<1$.
\end{theorem}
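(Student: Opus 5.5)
The plan is the standard normal-form argument: compute $\mathrm{Aut}(\mathfrak g(c))$ explicitly, let it act on Lorentzian Gram matrices by $g\mapsto A^TgA$ as in the proof for $\mathfrak{aff}(\mathbb R)$, and reduce each orbit to one of the listed representatives by a case split.

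\textbf{Step 1: automorphisms.} Put $w=\sqrt{1-c}>0$. First assume $c\neq 0$, so that $1+w$ and $1-w$ are distinct and non-zero.
\begin{itemize}
\item The derived algebra is then $\mathfrak h=\mathrm{Span}_{\mathbb R}\{x,y\}$. Hence every automorphism $A$ preserves $\mathfrak h$ and sends $z$ to $\lambda z+px+qy$.
\item Since $\mathrm{ad}(Az)|_{\mathfrak h}=\lambda\,\mathrm{ad}(z)|_{\mathfrak h}$ must be conjugate to $\mathrm{ad}(z)|_{\mathfrak h}$, comparing traces gives $\lambda=1$.
\item $A|_{\mathfrak h}$ must then commute with $\mathrm{ad}(z)|_{\mathfrak h}=\mathrm{diag}(1+w,1-w)$, which has distinct eigenvalues. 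So $A|_{\mathfrak h}$ is diagonal: $Ax=ax$, $Ay=by$ with $ab\neq0$.
\end{itemize}
So $\mathrm{Aut}(\mathfrak g(c))$ consists of the maps $x\mapsto ax$, $y\mapsto by$, $z\mapsto z+px+qy$. The case $c=0$ has a one-dimensional derived algebra and a larger automorphism group, which must be recomputed. I expect this only to enlarge orbits, so the same list should survive, but it has to be checked separately.

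\textbf{Step 2: reduction.} Write $g=(g_{ij})$ in the basis $\{x,y,z\}$. The entries $g_{11},g_{12},g_{22}$ transform by $g_{11}\mapsto a^2g_{11}$, $g_{12}\mapsto abg_{12}$, $g_{22}\mapsto b^2g_{22}$. The translation parameters $p,q$ act affinely on $g_{13},g_{23},g_{33}$. I split according to whether the eigenlines $\mathbb Rx$, $\mathbb Ry$ are null, and by the type of $g|_{\mathfrak h}$.
\begin{itemize}
\item \emph{$g|_{\mathfrak h}$ non-degenerate.} Use $p,q$ to make $z$ orthogonal to $\mathfrak h$, so $g=g|_{\mathfrak h}\oplus\nu\, z^*\otimes z^*$. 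Then rescale $x$ and $y$ by $a,b$.
\begin{itemize}
\item If $g_{12}=0$, this yields $\mathrm{diag}(1,1,-\mu)$ and $\mathrm{diag}(\epsilon,-\epsilon,\mu)$.
\item If $g_{12}\neq0$ and $g_{11}g_{22}=0$, it yields the two families with $\left(\begin{smallmatrix}0&1\\1&\epsilon\end{smallmatrix}\right)$ or $\left(\begin{smallmatrix}0&1\\1&0\end{smallmatrix}\right)$ in the top-left block.
\item If $g_{12}\neq0$ and $g_{11}g_{22}\neq0$, the signs of $a,b$ and $|a|,|b|$ allow normalising $g_{11}=\pm1$ and $g_{12}=1$. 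The remaining invariant $\tau=g_{11}g_{22}/g_{12}^2$ then gives the $(\tau,\nu)$ and $\eta$ families. The conditions $\nu(\tau-1)<0$ and $\eta<1$ are exactly the Lorentzian signature constraint.
\end{itemize}
\item \emph{$g|_{\mathfrak h}$ degenerate.} Then $g|_{\mathfrak h}$ has rank one or zero; rank zero would make $\mathfrak h$ a two-dimensional totally null subspace, which is impossible in signature $(2,1)$. So $g|_{\mathfrak h}$ has rank one and its radical is a line $\ell$.
\begin{itemize}
\item If $\ell$ is an eigenline $\mathbb Rx$ or $\mathbb Ry$: since $g_{i3}\neq0$ for the null direction, the translations can kill the other mixed term and $g_{33}$. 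This gives the two anti-diagonal-type normal forms.
\item If $\ell$ is not an eigenline: then $g_{11},g_{22}$ are both non-zero and $g_{12}^2=g_{11}g_{22}$. Normalising gives the matrix with top-left block $\left(\begin{smallmatrix}1&1\\1&1\end{smallmatrix}\right)$ and a single surviving parameter $\mu=g_{23}$ after the translations are used up.
\end{itemize}
\end{itemize}

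\textbf{Step 3: inequivalence and the main obstacle.} Finally, I check that the representatives are pairwise inequivalent. The invariants to compare are the type of $g|_{\mathfrak h}$, the nullity of the eigenlines, the sign pattern, and the surviving parameter. This can be confirmed by checking that the stabiliser computations leave exactly the listed parameters free.

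I expect the main obstacle to be the bookkeeping in the degenerate cases: making sure that the translations $p,q$ together with the sign choices for $a,b$ really exhaust all freedom, so that no continuous parameter is missed and none is spuriously kept. The special value $c=0$, with its different automorphism group, is the second point needing care; the natural way to handle it is to verify that the additional automorphisms identify no two normal forms that are distinct for $c\neq0$.
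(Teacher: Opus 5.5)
The paper gives no proof of this statement; it is quoted from Ha--Lee. Your route is the standard orbit normalisation under $\mathrm{Aut}(\mathfrak g(c))$, the same device the paper uses for $\mathfrak{aff}(\mathbb R)$ and in its non-unimodular lemmas. Your automorphism computation for $c\neq0$ is right, and the case analysis does land on exactly the nine listed forms. So the approach is sound, but a few points need correcting.

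\textbf{The case $c=0$.} Here your premise is wrong: $\mathfrak g(0)$ does not have a larger automorphism group.
\begin{enumerate}
\item Its derived algebra $\mathbb Rx$ and its centre $\mathbb Ry$ are characteristic, so $Ax=ax$ and $Ay=by$.
\item Writing $Az=\lambda z+px+qy$, the relation $[Az,Ax]=A[z,x]$ gives $2\lambda a=2a$, hence $\lambda=1$.
\end{enumerate}
So $\mathrm{Aut}(\mathfrak g(0))$ is the same group as for $c\neq0$, and your Step 2 applies verbatim. Even without this, the statement only asserts that every Lorentzian metric is equivalent to one on the list. For that it suffices that your maps $x\mapsto ax$, $y\mapsto by$, $z\mapsto z+px+qy$ are still automorphisms of $\mathfrak g(0)$, which is a one-line check. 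Identifications by extra automorphisms would only bear on uniqueness, which the statement does not claim, so your Step 3 is unnecessary.

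\textbf{A misplaced subcase.} Your split by $g_{11}g_{22}$ puts one case in the wrong place. The case $g_{12}\neq0$, $g_{11}\neq0=g_{22}$ gives the block $\left(\begin{smallmatrix}\epsilon&1\\1&0\end{smallmatrix}\right)$. No automorphism turns this into $\left(\begin{smallmatrix}0&1\\1&\epsilon\end{smallmatrix}\right)$, because whether $g_{11}=0$ is invariant ($x$ and $y$ cannot be swapped). It is in fact the member $\tau=0$ (with $\nu>0$) or $\eta=0$ of the last two families. So split on $g_{11}=0$ versus $g_{11}\neq0$ instead.

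\textbf{Missing sign arguments in the degenerate cases.}
\begin{enumerate}
\item With radical $\mathbb Rx$, one has $\det g=-g_{22}g_{13}^2$, so the Lorentzian condition forces $g_{22}>0$. Similarly $g_{11}>0$ when the radical is $\mathbb Ry$. This is why those entries are $1$ rather than $\pm1$.
\item When the radical is not an eigenline, $g|_{\mathfrak h}$ must be positive semidefinite. In signature $(2,1)$ a non-zero null vector cannot be orthogonal to a timelike one, which rules out the block $-\left(\begin{smallmatrix}1&1\\1&1\end{smallmatrix}\right)$.
\item After normalising the block to $\left(\begin{smallmatrix}1&1\\1&1\end{smallmatrix}\right)$, the remaining freedom is $a=b=\pm1$, and this sign is what makes $\mu>0$.
\end{enumerate}
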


We now proceed with the classification of the metrics carrying left-invariant harmonic spinors. 
First we start with the non-isotropic case, where the codimension one Abelian ideal $\mathfrak h$ is non-degenerate with respect to the metric $g$. We distinguish the cases where $\mathfrak h$ is Riemannian or Lorentzian.

\begin{lemma}
Let $(\mathfrak g=\mathfrak h\rtimes_D\mathbb R,g)$ be a non-unimodular three-dimensional non-isotropic almost Abelian Lorentzian Lie algebra with $\mathfrak h$ Riemannian. 
Then it does not admit left-invariant harmonic spinors.
\end{lemma}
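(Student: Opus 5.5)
The plan is to apply Proposition \ref{prop:Dirac_almost_Abelian} directly with an adapted orthonormal basis and show that the resulting endomorphism of $\Sigma=\mathbb C^2$ is invertible. Since $\mathfrak h$ is Riemannian and $g$ is Lorentzian, $\mathfrak h^\perp$ is timelike. So I would choose an orthonormal basis $\{e_1,e_2,e_3\}$ with $\mathfrak h=\mathrm{Span}_{\mathbb R}\{e_1,e_2\}$, $e_3\in\mathfrak h^\perp$, and $\varepsilon_1=\varepsilon_2=-\varepsilon_3=1$. This is exactly the signature convention of \eqref{eq:rep_Cl(2,1)}. Write $D=\mathrm{ad}(e_3)|_{\mathfrak h}=(d_{ij})$. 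Non-unimodularity means precisely $\mathrm{tr}(D)\neq 0$, as recalled in the proof of Corollary \ref{cor:almost_Abelian_non_unimodular}.

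Proposition \ref{prop:Dirac_almost_Abelian} then gives
\[\slashed{D}\psi=-\frac14(d_{12}-d_{21})(e^1\wedge e^2\wedge e^3)\psi-\frac12\mathrm{tr}(D)e^3\psi .\]
Next I would compute the action of the volume element with the matrices in \eqref{eq:rep_Cl(2,1)}. One finds $e^1e^2=\mathrm{diag}(-i,i)$, and hence $e^1e^2e^3=-i\,\mathrm{Id}_\Sigma$. This is consistent with the scalar $\frac{i}{4}(d_{12}-d_{21})$ appearing in the proof of Proposition \ref{prop:3d_UAA_non_isotropic}. Therefore $\slashed{D}$ acts on $\Sigma$ by the diagonal matrix
\[\mathrm{diag}\Bigl(\tfrac{i}{4}(d_{12}-d_{21})-\tfrac12\mathrm{tr}(D),\ \tfrac{i}{4}(d_{12}-d_{21})+\tfrac12\mathrm{tr}(D)\Bigr).\]

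The conclusion follows from looking at this matrix. Both diagonal entries have real part $\mp\frac12\mathrm{tr}(D)$, which is nonzero because all $d_{ij}$ are real and $\mathrm{tr}(D)\neq0$. Hence both entries are nonzero, $\slashed{D}$ is invertible on $\Sigma$, and no nonzero left-invariant harmonic spinor exists.

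The only delicate point is bookkeeping of conventions. One must check that the sign and the factor $i$ of the volume element are right, so that the $\omega$-part is purely imaginary while the $\mathrm{tr}(D)e^3$-part is real. If a different normalisation gave both terms the same phase, cancellation could occur. To guard against this, I would double-check basis-independently: $(e^1e^2e^3)^2=-1$, so the volume element acts by $\pm i$; and $e^3$ squares to $+1$ here, so $e^3$ is real-diagonalizable with eigenvalues $\pm1$ and commutes with the central volume element. The eigenvalues of $\slashed{D}$ are therefore $\pm\frac{i}{4}(d_{12}-d_{21})\pm\frac12\mathrm{tr}(D)$, with the signs chosen independently, and none of them vanish.
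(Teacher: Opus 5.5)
Your proof is correct and follows essentially the same route as the paper: the same adapted orthonormal basis with $\varepsilon_1=\varepsilon_2=-\varepsilon_3=1$, Proposition~\ref{prop:Dirac_almost_Abelian} evaluated in the representation \eqref{eq:rep_Cl(2,1)}, and the same diagonal matrix for $\slashed{D}$. The only difference is cosmetic. The paper concludes via $\det(\slashed{D})=-\frac14\mathrm{tr}(D)^2-\frac1{16}(d_{12}-d_{21})^2$, which vanishes only if $\mathrm{tr}(D)=0$, whereas you read off invertibility from the nonzero real parts $\mp\frac12\mathrm{tr}(D)$ of the diagonal entries; your representation-independent double check is a sound extra but not needed.
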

\begin{proof}
Let $\{e_1,e_2,e_3\}$ be an orthonormal basis of $(\mathfrak g,g)$ satisfying $\varepsilon_1=\varepsilon_2=-\varepsilon_3=1$, $\mathfrak h=\mathrm{Span}_{\mathbb R}\{e_1,e_2\}$, and 
\[D=\mathrm{ad}(e_3)|_{\mathfrak h}=
\begin{pmatrix}
d_{11}&d_{12}\\
d_{21}&d_{22}
\end{pmatrix}
\]
for some $d_{ij}\in\mathbb R$. 
Note that $\mathrm{tr}(D)=d_{11}+d_{22}\neq0$ since $\mathfrak g$ is non-unimodular. 
Let $\psi\in\Sigma$ be a left-invariant spinor. 
Recall that we use the representation of $\mathrm{Cl}(2,1)$ given by \eqref{eq:rep_Cl(2,1)}. 
By Proposition \ref{prop:Dirac_almost_Abelian} we have 
\[
\slashed{D}\psi=\begin{pmatrix}
-\frac{1}{2}(d_{11}+d_{22})+\frac{i}{4}(d_{12}-d_{21})&0\\
0&\frac{1}{2}(d_{11}+d_{22})+\frac{i}{4}(d_{12}-d_{21})
\end{pmatrix}\psi.
\]
Left-invariant harmonic spinors exist if and only if 
\[\det(\slashed{D})=-\frac{1}{4}(d_{11}+d_{22})^2-\frac{1}{16}(d_{12}-d_{21})^2=0,\] 
so in particular $\mathrm{tr}(D)=d_{11}+d_{22}=0$, which is impossible as $\mathrm{tr}(D) \neq 0$.
\end{proof}

We then consider the case where the metric $g$ restricted to the Abelian ideal $\mathfrak h$ is Lorentzian, that is, the case where $\{e_1,e_2,e_3\}$ is an orthonormal basis of $(\mathfrak g,g)$ satisfying $\varepsilon_1=-\varepsilon_2=\varepsilon_3=1$. 
Note that, in order to be compatible with the signature, the representation of $\mathrm{Cl}(2,1)$ here has to be chosen as follows: 
\[
e^1=
\begin{pmatrix}
0&i\\
i&0
\end{pmatrix},
\qquad 
e^2=
\begin{pmatrix}
1&0\\
0&-1
\end{pmatrix},
\qquad 
e^3=\begin{pmatrix}
0&1\\
-1&0
\end{pmatrix}.
\]
In this situation, by Proposition \ref{prop:Dirac_almost_Abelian} we have 
\[
\slashed{D}\psi=\begin{pmatrix}
-\frac{i}{4}(d_{12}+d_{21})&-\frac{1}{2}(d_{11}+d_{22})\\
\frac{1}{2}(d_{11}+d_{22})&-\frac{i}{4}(d_{12}+d_{21})
\end{pmatrix}\psi.
\]
Left-invariant harmonic spinors exist if and only if 
\[\det(\slashed{D})=\frac{1}{4}(d_{11}+d_{22})^2-\frac{1}{16}(d_{12}+d_{21})^2=0,\] which amounts to 
\[d_{12}+d_{21}=2\sigma(d_{11}+d_{22}), \qquad \sigma \in \{\pm 1\}.\]

\begin{lemma}
Let $(\mathfrak g=\mathfrak h\rtimes_D\mathbb R,g)$ be a non-unimodular three-dimensional non-isotropic almost Abelian Lorentzian Lie algebra with $\mathfrak h$ Lorentzian. 
If it admits left-invariant harmonic spinors, then $\mathfrak g\not\cong\mathbb R^2\rtimes_{\mathrm{Id}}\mathbb R$.
\end{lemma}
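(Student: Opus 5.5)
The plan is to argue by contradiction: suppose $\mathfrak g\cong\mathbb R^2\rtimes_{\mathrm{Id}}\mathbb R$ and show that the derivation $D$ is forced to be a nonzero multiple of the identity. That is incompatible with the condition $d_{12}+d_{21}=2\sigma(d_{11}+d_{22})$ derived just above the statement, which is equivalent to the existence of left-invariant harmonic spinors in this setting (orthonormal basis with $\varepsilon_1=-\varepsilon_2=\varepsilon_3=1$, $\mathfrak h=\mathrm{Span}_{\mathbb R}\{e_1,e_2\}$, $D=\mathrm{ad}(e_3)|_{\mathfrak h}$).

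The first step is to identify the codimension one Abelian ideal $\mathfrak h$ intrinsically. In the basis $\{x,y,z\}$ of Theorem \ref{thm:Ha_Lee_RH3}, the derived algebra is $[\mathfrak g,\mathfrak g]=\mathrm{Span}_{\mathbb R}\{x,y\}$, and this is contained in any codimension one Abelian ideal. I will show $\mathfrak h=\mathrm{Span}_{\mathbb R}\{x,y\}$ as follows. If $\mathfrak h$ were a different two-dimensional subspace containing the line $[\mathfrak g,\mathfrak g]\cap\mathfrak h$, it would contain some $w=z+ax+by$, since it is two-dimensional and not equal to $\mathrm{Span}_{\mathbb R}\{x,y\}$. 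Then $\mathfrak h=\mathrm{Span}_{\mathbb R}\{w,u\}$ with $0\neq u\in\mathrm{Span}_{\mathbb R}\{x,y\}$. But $[w,u]=u\neq0$, so $\mathfrak h$ would not be Abelian, a contradiction. (Alternatively, an ideal must contain $[\mathfrak g,\mathfrak g]$ whenever it contains $w$, which would force it to be three-dimensional.) Hence $\mathfrak h=\mathrm{Span}_{\mathbb R}\{x,y\}$.

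Next, write $e_3=\lambda z+h_0$ with $h_0\in\mathfrak h$. Here $\lambda\neq0$ because $e_3\notin\mathfrak h$. Since $\mathfrak h$ is Abelian, $\mathrm{ad}(e_3)|_{\mathfrak h}=\lambda\,\mathrm{ad}(z)|_{\mathfrak h}=\lambda\,\mathrm{Id}_{\mathfrak h}$. This holds in every basis of $\mathfrak h$, in particular in $\{e_1,e_2\}$. Therefore $d_{11}=d_{22}=\lambda$ and $d_{12}=d_{21}=0$.

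Finally, substituting into $d_{12}+d_{21}=2\sigma(d_{11}+d_{22})$ gives $0=4\sigma\lambda$, contradicting $\lambda\neq0$. Equivalently, $\det(\slashed{D})=\lambda^2\neq0$ by the explicit matrix of $\slashed{D}$ computed above, so $\slashed{D}$ is invertible on $\Sigma$ and has no kernel. The only step requiring any care is the uniqueness of the Abelian ideal, so that $D$ really is scalar; the rest is immediate.
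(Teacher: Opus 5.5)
Your proof is correct and follows the same route as the paper: force $D$ to be a nonzero multiple of the identity, then contradict $d_{12}+d_{21}=2\sigma(d_{11}+d_{22})$. The only difference is how you get $D$ scalar. The paper obtains $D=\alpha\,\mathrm{Id}$ by citing Freibert's isomorphism criterion for almost Abelian Lie algebras, while you derive it directly from uniqueness of the codimension-one Abelian ideal. That uniqueness is in fact immediate from your own remark that any codimension-one ideal contains the two-dimensional $[\mathfrak g,\mathfrak g]$, so your subsequent case analysis is unnecessary.
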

\begin{proof}
By \cite[Prop.\ 1]{freibert}, the almost Abelian Lie algebra $\mathfrak g=\mathbb R^2\rtimes_D\mathbb R$ is isomorphic to $\mathbb R^2\rtimes_{\mathrm{Id}}\mathbb R$ if and only if there exists $\alpha \in\mathbb R\setminus\{0\}$ and $P\in\mathrm{GL}(2,\mathbb R)$ such that $D=\alpha P^{-1}\mathrm{Id} P = \alpha \mathrm{Id}$. 
This implies $d_{12}=d_{21}=0$ and $d_{11}=d_{22}=\alpha $. 
Then $d_{12}+d_{21}=2\sigma(d_{11}+d_{22})$ forces $\alpha=0$, contradiction. 
\end{proof}

Therefore, $\mathfrak g\cong\mathfrak g(c)$ for some $c\in\mathbb R$. 
We distinguish three cases depending on the eigenvalues of $D$, which are given by 
\[\lambda_\pm=\frac{1}{2}\left(\mathrm{tr}(D)\pm\sqrt{\mathrm{tr}(D)^2-4\det(D)}\right),\] 
with $\mathrm{tr}(D)\neq0$ and $d_{12}+d_{21}=2\sigma\mathrm{tr}(D)$. 
We also note that in this situation the scalar curvature of the metric $g$ is given by 
\[\mathrm{Scal}(g)=2\det(D).\]

\begin{lemma}
Let $(\mathfrak g=\mathfrak h\rtimes_D\mathbb R,g)$ be a non-unimodular three-dimensional non-isotropic almost Abelian Lorentzian Lie algebra with $\mathfrak h$ Lorentzian and $\mathrm{tr}(D)^2=4\det(D)$. 
If it admits left-invariant harmonic spinors, then $\mathfrak g\cong\mathfrak g(1)$ and $g$ is equivalent to
\[\begin{pmatrix}
\epsilon&0&0\\
0&-\epsilon/16&0\\
0&0&\mu
\end{pmatrix},
\qquad
\epsilon=\pm1,\quad \mu>0.
\]
\end{lemma}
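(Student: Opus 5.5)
We work in the setting fixed just before the statement: $\{e_1,e_2,e_3\}$ is orthonormal with $\varepsilon_1=-\varepsilon_2=\varepsilon_3=1$, $\mathfrak h=\mathrm{Span}_{\mathbb R}\{e_1,e_2\}$ and $D=\mathrm{ad}(e_3)|_{\mathfrak h}=(d_{ij})$. Set $T\coloneqq\mathrm{tr}(D)\neq0$. The harmonicity condition is $d_{12}+d_{21}=2\sigma T$ with $\sigma=\pm1$. The hypothesis $T^2=4\det(D)$ says that $D$ has the double eigenvalue $T/2\neq0$.

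First we rule out $D=\frac T2\mathrm{Id}$. In that case $d_{12}=d_{21}=0$, so the harmonicity condition forces $T=0$, which is a contradiction. Hence $D$ is a non-semisimple Jordan block with eigenvalue $T/2$. After rescaling $e_3$, $D$ is conjugate to $\left(\begin{smallmatrix}1&1\\0&1\end{smallmatrix}\right)$, so $\mathfrak g\cong\mathfrak g(1)$ by Table~\ref{table:three-dim-lie-algebras-non-uni} and the Bianchi comparison. It is convenient to write $d_{11}=\frac T2+p$ and $d_{22}=\frac T2-p$. The condition $\det D=T^2/4$ then becomes $p^2+d_{12}d_{21}=0$. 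Combining this with $d_{12}+d_{21}=2\sigma T$, we find that $d_{12}$ and $d_{21}$ are the roots of $X^2-2\sigma TX-p^2=0$, namely $\sigma T\pm\sqrt{T^2+p^2}$. So we have a one-parameter family (parameter $p$, up to the signs $\sigma$ and the choice of root) for each $T$.

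The key step is to build an explicit adapted basis $\{x,y,z\}$ with $[z,x]=x$, $[z,y]=x+y$ and $[x,y]=0$, and compute the Gram matrix. We take $z=\frac{2}{T}e_3$ (possibly plus an element of $\mathfrak h$, though this should not be needed in the non-isotropic case). Then $\mathrm{ad}(z)|_{\mathfrak h}=\frac2T D=\mathrm{Id}+N$ with $N$ nilpotent of rank one. We take $x$ spanning $\ker N=\operatorname{im}N$ and $y$ any vector with $Ny=x$. Then $g(z,z)=\frac{4}{T^2}=\mu>0$ and $z\perp\mathfrak h$.

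The metric on $\mathfrak h$ is Lorentzian. The remaining freedom is $y\mapsto y+sx$ together with the overall scaling $x\mapsto\lambda x$, $y\mapsto\lambda y$. These are automorphisms of $\mathfrak g(1)$ fixing $z$, and the scaling allows either sign of $\lambda$. The main computation, which I expect to be the obstacle, is to show that harmonicity forces $g(x,x)\neq0$. The Theorem~\ref{thm:Ha_Lee_g(1)} forms with $g(x,x)=0$ (the off-diagonal $\epsilon$ form) must be excluded. Granting that, we normalise $g(x,x)=\epsilon=\pm1$ via $\lambda$ and use $s$ to kill $g(x,y)$. It then remains to show that harmonicity pins $g(y,y)=-\epsilon/16$.

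To see this concretely, I would compute in the eigenbasis: $\ker N$ is spanned by $x_0=(d_{12}, -p)$, obtained from the first row of $D-\frac T2$, which is $(p,d_{12})$. I would then evaluate $g(x_0,x_0)=d_{12}^2-p^2=d_{12}^2+d_{12}d_{21}=d_{12}\cdot2\sigma T$. This is nonzero unless $d_{12}=0$. If $d_{12}=0$, then $p=0$ and $d_{21}=2\sigma T$, and we use the row $(d_{21},0)$ instead, i.e.\ the symmetric computation.

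The quantity $g(y,y)/g(x,x)$ after orthogonalisation equals $\det(g|_{\mathfrak h})/g(x,x)^2$ in the basis $\{x,y\}$. Since $\{x,y\}$ is related to $\{e_1,e_2\}$ by a matrix whose determinant is computable from $N$, this ratio is an invariant expression in $T$, $p$ and $d_{12}$. Substituting the harmonicity relation should yield exactly $-1/16$. The factor $16$ should come from $N=\frac2T(D-\frac T2)$ together with $d_{12}+d_{21}=2\sigma T$; heuristically, $(\frac{2}{T})^2\cdot(2T)^2/\ldots$. Verifying this constant is the main computation.

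Finally, I would check that the two signs $\epsilon=\pm1$ both occur, via the choice of $\sigma$ or of the sign of $d_{12}$, and that every $\mu=4/T^2>0$ is realised. These metrics match the Ha--Lee forms $\mathrm{diag}(1,-\nu,\mu)$ and $\mathrm{diag}(-1,\nu,\mu)$ of Theorem~\ref{thm:Ha_Lee_g(1)} with $\nu=1/16$.
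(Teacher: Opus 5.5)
Your proposal is correct and follows the paper's proof: the paper also takes $z=\frac{2}{T}e_3$, $x$ in the image of $D-\frac{T}{2}\mathrm{Id}$ and $y$ a preimage, uses $d_{12}+d_{21}=2\sigma T$ to get $g(x,x)\neq0$, and normalises with the automorphism $x\mapsto\lambda x$, $y\mapsto\lambda y+sx$ to read off $\nu=(g_{12}^2-g_{11}g_{22})/g_{11}^2$; the only difference is that it uses the first column of $D-\frac{T}{2}\mathrm{Id}$ (splitting on $(d_{11}-d_{22},d_{21})\neq(0,0)$, with $g_{11}=-2\sigma d_{21}T$), whereas you use the second column and split on $d_{12}$. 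The constant you left unverified does come out right: with $y=e_2$ and $x=Ne_2=\frac{2}{T}(d_{12}e_1-pe_2)$, using $p^2=-d_{12}d_{21}$, one gets $g(x,x)=8\sigma d_{12}/T$ and $\det G=-4d_{12}^2/T^2$, so $\det G/g(x,x)^2=-1/16$; in the case $d_{12}=0$ the choice $x=4\sigma e_2$, $y=e_1$ gives $-16/256=-1/16$ as well.
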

\begin{proof}
In this case $D$ has two equal non-zero eigenvalues, but is not diagonalizable. 
Hence $\mathfrak g$ is isomorphic to $\mathfrak g(1)$. 
Moreover, the scalar curvature of $g$ is positive since $\mathrm{Scal}(g)=\frac{1}{2}\mathrm{tr}(D)^2$. 
Let $\lambda=\frac{1}{2}\mathrm{tr}(D)$ and consider 
\[D-\lambda\mathrm{Id}=
\begin{pmatrix}
\frac{1}{2}(d_{11}-d_{22})&d_{12}\\
d_{21}&-\frac{1}{2}(d_{11}-d_{22})
\end{pmatrix}.
\]
Suppose that $(d_{11}-d_{22},d_{21})\neq(0,0)$ and consider the basis $\{x,y,z\}$ given by 
\[x=\frac{1}{2}(d_{11}-d_{22})e_1+d_{21}e_2,\qquad y=\frac{\mathrm{tr}(D)}{2}e_1,\qquad z=\frac{2}{\mathrm{tr}(D)}e_3.\]
The basis $\{x,y,z\}$ satisfies $[x,y]=0$, $[z,x]=x$, and $[z,y]=x+y$, and the matrix of the metric $g$ in this basis is 
\[g=
\begin{pmatrix}
g_{11}&g_{12}&0\\
g_{12}&g_{22}&0\\
0&0&g_{33}
\end{pmatrix}
\coloneqq
\begin{pmatrix}
\frac{1}{4}(d_{11}-d_{22})^2-d_{21}^2&\frac{1}{4}(d_{11}-d_{22})\mathrm{tr}(D)&0\\
\frac{1}{4}(d_{11}-d_{22})\mathrm{tr}(D)&\frac{\mathrm{tr}(D)^2}{4}&0\\
0&0&\frac{4}{\mathrm{tr}(D)^2}
\end{pmatrix}.
\]
We note that $g_{11}=-2\sigma d_{21}\mathrm{tr}(D)\neq0$ and that the restriction of $g$ to $\mathrm{Span}_{\mathbb R}\{x,y\}$ is Lorentzian. 
Combining this with the fact that $\mathrm{Scal}(g)>0$, we conclude that only two of the (families of) metrics on $\mathfrak g(1)$ from Theorem \ref{thm:Ha_Lee_g(1)} could admit left-invariant harmonic spinors. 
We now identify them.
Set 
\[A=\begin{pmatrix}
1/\sqrt{|g_{11}|}&-g_{12}/g_{11}\sqrt{|g_{11}|}&0\\
0&1/\sqrt{|g_{11}|}&0\\
0&0&1
\end{pmatrix}\in\mathrm{Aut}(\mathfrak g(1)),\]
then for $\epsilon = \mathrm{sgn}(g_{11}) = \pm1$ we have
\[A^TgA=
\begin{pmatrix}
\epsilon &0&0\\
0&-\epsilon\nu&0\\
0&0&\mu
\end{pmatrix},
\quad
\nu=\frac{g_{12}^2-g_{11}g_{22}}{g_{11}^2}=\frac{1}{16},\quad\mu=g_{33}=\frac{4}{\mathrm{tr}(D)^2}>0.
\]
Suppose that $(d_{11}-d_{22},d_{21})=(0,0)$, so 
$D=d_{11}
\left(
\begin{smallmatrix}
1&4\sigma\\
0&1
\end{smallmatrix}
\right)$, 
and consider the basis $\{x,y,z\}$ given by 
\[x=4\sigma d_{11}e_1,\qquad y=d_{11}e_2,\qquad z=\frac{1}{d_{11}}e_3.\]
This satisfies $[x,y]=0$, $[z,x]=x$, and $[z,y]=x+y$, and the matrix of $g$ in this basis is $g=\mathrm{diag}(16d_{11}^2,-d_{11}^2,1/d_{11}^2).$
Choosing $A=\mathrm{diag}\left(1/4d_{11},1/4d_{11},1\right)\in\mathrm{Aut}(\mathfrak g(1))$ we conclude that 
\[A^TgA=
\begin{pmatrix}
1&0&0\\
0&-\nu&0\\
0&0&\mu
\end{pmatrix},
\qquad 
\nu=\frac{1}{16},\quad\mu=\frac{1}{d_{11}^2}>0,
\] which coincides with the former of the above metrics.
\end{proof}

\begin{lemma}
Let $(\mathfrak g=\mathfrak h\rtimes_D\mathbb R,g)$ be a non-unimodular three-dimensional non-isotropic almost Abelian Lorentzian Lie algebra with $\mathfrak h$ Lorentzian and $\mathrm{tr}(D)^2<4\det(D)$. 
If it admits left-invariant harmonic spinors, then $\mathfrak g\cong\mathfrak g(c)$ with $c>1$ and $g$ is equivalent to 
\[\begin{pmatrix}
1&1&0\\
1&\tau&0\\
0&0&\mu
\end{pmatrix},\qquad\tau=-(c+6)\pm4\sqrt{c+3}<1,\quad\mu>0.
\]
\end{lemma}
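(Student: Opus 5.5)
The plan is to follow the pattern of the previous lemma. First I identify the isomorphism class from the spectrum of $D$. Then I build an explicit basis $\{x,y,z\}$ realising the brackets of $\mathfrak g(c)$ in Theorem \ref{thm:Ha_Lee_g(c)_c>1}. Finally I compute the Gram matrix of $g$ in that basis, use an automorphism to bring it to Ha--Lee normal form, and read off $\tau$ from the harmonic-spinor constraint $d_{12}+d_{21}=2\sigma\,\mathrm{tr}(D)$.

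\textbf{Identification.} Since $\mathrm{tr}(D)^2<4\det(D)$, the derivation $D$ has two non-real conjugate eigenvalues. Rescaling, set $z\coloneqq\frac{2}{\mathrm{tr}(D)}e_3$ and $D'\coloneqq\mathrm{ad}(z)|_{\mathfrak h}=\frac{2}{\mathrm{tr}(D)}D$. Then $\mathrm{tr}(D')=2$ and $\det(D')=c\coloneqq 4\det(D)/\mathrm{tr}(D)^2>1$. By Cayley--Hamilton, $D'^2=2D'-c\,\mathrm{Id}$. Hence for any $x\in\mathfrak h$ which is not an eigenvector (here every non-zero $x$), setting $y\coloneqq D'x=[z,x]$ gives $[z,y]=-cx+2y$ and $[x,y]=0$. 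So $\mathfrak g\cong\mathfrak g(c)$ with $c>1$. Moreover $g(z,z)=\mu\coloneqq 4/\mathrm{tr}(D)^2>0$, and $z\perp\mathfrak h$ because we are in the non-isotropic case. This already excludes the first and third families of Theorem \ref{thm:Ha_Lee_g(c)_c>1}: the first has $z$ null, the third has $g(z,z)<0$. It remains to show $g$ is of the middle form and to determine $\tau$.

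\textbf{Normalisation and computation of $\tau$.} Write
\[
D'=\begin{pmatrix}1+p&q\\ r&1-p\end{pmatrix},
\]
with the harmonic constraint becoming $q+r=4\sigma$ and $c=1-p^2-qr$. On $\mathfrak h$ the metric is $\mathrm{diag}(1,-1)$ in $\{e_1,e_2\}$. Matrices commuting with $D'$ are of the form $a\mathrm{Id}+bD'$, and these extend to automorphisms of $\mathfrak g(c)$ fixing $z$. The freedom in choosing $x$ is therefore exactly $x\mapsto(a+bD')x$, a copy of $\mathbb C^*$ acting on $\mathfrak h\cong\mathbb C$. I use it to impose $g(x,x)=1$ and $g(x,D'x)=1$. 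These are two real conditions on the two real parameters $(a,b)$, and solvability will be checked. Then $\tau=g(D'x,D'x)$.

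A cleaner route to $\tau$ is through invariants. In the basis $\{x,D'x\}$ the Gram matrix $G$ has $\det G=\tau-1$. On the other hand $\det G=-\det[x\mid D'x]^2$, which is negative, consistent with $\tau<1$. Expressing $\tau$ through the $J$-symmetric and $J$-skew parts of $D'$ (with $J=\mathrm{diag}(1,-1)$), together with $c$ and $q+r=4\sigma$, should give the quadratic
\[
\tau^2+2(c+6)\tau+(c+6)^2-16(c+3)=0,
\]
whose roots are $\tau=-(c+6)\pm4\sqrt{c+3}$. Since $c>1$, both roots are $<1$, matching the range in Theorem \ref{thm:Ha_Lee_g(c)_c>1}.

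\textbf{Main obstacle.} The hardest step is the normalisation. I must show that the conditions $g(x,x)=1$ and $g(x,D'x)=1$ are simultaneously solvable, i.e.\ that a spacelike $x$ with these values exists. I must also pin down which sign of $\tau$ arises from which data ($\sigma$, and the sign of $\det[x\mid D'x]$), and check that the parameters $p,q,r$ are eliminated so that $\tau$ depends only on $c$. To handle this I would pass to the complex coordinate in which $D'$ acts as multiplication by $1+i\sqrt{c-1}$. In that coordinate the conditions become a single quadratic equation in the phase of $x$, and its discriminant should reduce, via $q+r=4\sigma$, to $16(c+3)$.
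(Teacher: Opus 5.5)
Your identification $\mathfrak g\cong\mathfrak g(c)$ with $c=4\det(D)/\mathrm{tr}(D)^2>1$ is correct. Discarding the first and third Ha--Lee families is also correct, though it is better phrased through $g|_{\mathfrak h}$: that restriction is degenerate, resp.\ positive definite, in those families. This is an invariant because $\mathfrak h=[\mathfrak g,\mathfrak g]$ is preserved by automorphisms, whereas $z$ is not ($z\mapsto z+w$, $w\in\mathfrak h$, is an automorphism).

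The gap is in the only step with real content, the value of $\tau$. You say the $J$-symmetric/skew decomposition ``should give'' the quadratic and that a discriminant ``should reduce'' to $16(c+3)$, but nothing is computed. The one quantity you do name, $\det G=\tau-1=-\det[x\mid D'x]^2$, is not an invariant: it scales with the choice of $x$, and it equals $\tau-1$ only after the normalisation you have not carried out. So the proposal never shows how $p,q,r$ are eliminated. In fact they are not eliminated completely. Your constraints amount to $(q-2\sigma)^2-p^2=c+3$, and $\tau$ is constant on each branch of this hyperbola but differs between the two branches. That, not a choice of phase, is where the $\pm$ comes from.

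The missing idea is to compare a quantity that is invariant under isometric isomorphisms and computable both from $D$ and from the Ha--Lee normal form. The paper uses the scalar curvature. With $d_{12}+d_{21}=2\sigma\,\mathrm{tr}(D)$ one gets $\mathrm{Scal}(g)=2\det(D)=2c/\mu$, where $\mu=4/\mathrm{tr}(D)^2$. The middle Ha--Lee metric has $\mathrm{Scal}=\frac{\tau^2-2(c-6)\tau+c^2-12}{2\mu(1-\tau)}$. Equating the two gives exactly your quadratic $\tau^2+2(c+6)\tau+c^2-4c-12=0$.

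In your notation the same comparison is cleanest with $\mathrm{tr}(D'^*D')$, the adjoint taken with respect to $g|_{\mathfrak h}$. This is invariant because an isometric isomorphism must send $z=\frac{2}{\mathrm{tr}(D)}e_3$ to the Ha--Lee $z$: both span $\mathfrak h^\perp$, and $\mathrm{ad}$ of each has trace $2$ on $\mathfrak h$. Hence $D'$ is conjugated by an isometry of $\mathfrak h$. In $\{e_1,e_2\}$, $\mathrm{tr}(D'^*D')$ equals $(1+p)^2+(1-p)^2-q^2-r^2=-2(c+6)$, using $q+r=4\sigma$ and $c=1-p^2-qr$. In a basis $\{x,D'x\}$ with Gram matrix $\left(\begin{smallmatrix}1&1\\1&\tau\end{smallmatrix}\right)$, where $D'$ has matrix $\left(\begin{smallmatrix}0&-c\\1&2\end{smallmatrix}\right)$, it equals $(\tau^2+c^2-2c)/(\tau-1)$. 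Equating the two values gives the quadratic.

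Finally, you already invoke Ha--Lee's theorem to discard the other families, and the same theorem guarantees that the middle normal form exists. So your ``main obstacle'' (solving $g(x,x)=g(x,D'x)=1$) is not needed at all. The paper simply takes $x=e_1$, $y=D'e_1$ and lets the invariant fix $\tau$.
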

\begin{proof}
In this case $D$ has two different complex non-real eigenvalues. 
Hence $\mathfrak g$ is isomorphic to $\mathfrak g(c)$ with $c>1$. 
Moreover, the scalar curvature of $g$ is positive since $\mathrm{Scal}(g)>\frac{1}{2}\mathrm{tr}(D)^2$. 
Consider the basis $\{x,y,z\}$ given by 
\[x=e_1,\qquad y=\frac{2}{\mathrm{tr}(D)}(d_{11}e_1+d_{21}e_2),\qquad z=\frac{2}{\mathrm{tr}(D)}e_3.\]
The basis $\{x,y,z\}$ satisfies $[x,y]=0$, $[z,x]=y$, and $[z,y]=-cx+2y$, where $c=4\det(D)/\mathrm{tr}(D)^2>1$. 
The matrix of the metric $g$ in this basis is \[g=
\begin{pmatrix}
1&2d_{11}/\mathrm{tr}(D)&0\\
2d_{11}/\mathrm{tr}(D)&4(d_{11}^2-d_{21}^2)/\mathrm{tr}(D)^2&0\\
0&0&4/\mathrm{tr}(D)^2
\end{pmatrix}.
\]
Since the restriction of $g$ to $\mathrm{Span}_{\mathbb R}\{x,y\}$ is Lorentzian, we conclude that only the second of the (families of) metrics on $\mathfrak g(c)$ with $c>1$ from Theorem \ref{thm:Ha_Lee_g(c)_c>1} could admit left-invariant harmonic spinors. Note that the scalar curvature is given by 
\[\mathrm{Scal}(g)=2\det(D)=\frac{c}{2}\mathrm{tr}(D)^2=\frac{2c}{\mu},\] 
where $\mu=4/\mathrm{tr}(D)^2>0$. 
Comparing this with the scalar curvature of the second metric in Theorem \ref{thm:Ha_Lee_g(c)_c>1}, which is 
\[\frac{\tau^2-2(c-6)\tau+(c^2-12)}{2\mu(1-\tau)},\] 
we conclude that $\tau=-(c+6)\pm4\sqrt{c+3}$, which satisfies $\tau<1$ for all $c>1$, as expected.
\end{proof}

When $\mathrm{tr}(D)^2>4\det(D)$, we may have $\det(D)=0$. We address this case first.
\begin{lemma}
Let $(\mathfrak g=\mathfrak h\rtimes_D\mathbb R,g)$ be a non-unimodular three-dimensional non-isotropic almost Abelian Lorentzian Lie algebra with $\mathfrak h$ Lorentzian and $\det(D)=0$. 
If it admits left-invariant harmonic spinors, then $\mathfrak g\cong\mathfrak g(0)$ and $g$ is equivalent to
\[
\begin{pmatrix}
\epsilon&1&0\\
1&3\epsilon/4&0\\
0&0&\nu
\end{pmatrix}, 
\qquad
\epsilon = \pm1, \quad
\nu>0.
\]
\end{lemma}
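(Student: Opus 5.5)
The plan is to identify $\mathfrak g$ from the spectrum of $D$, and then to read off the metric in an adapted basis using the harmonicity condition $d_{12}+d_{21}=2\sigma\,\mathrm{tr}(D)$ derived above for $\mathfrak h$ Lorentzian. We keep the orthonormal basis $\{e_1,e_2,e_3\}$ with $\varepsilon_1=-\varepsilon_2=\varepsilon_3=1$, $\mathfrak h=\mathrm{Span}_{\mathbb R}\{e_1,e_2\}$ and $D=\mathrm{ad}(e_3)|_{\mathfrak h}$. Set $t\coloneqq\mathrm{tr}(D)\neq0$.

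\textbf{Identifying $\mathfrak g$.} Since $\det(D)=0$ and $t\neq0$, the eigenvalues of $D$ are $0$ and $t$. They are distinct, so $D$ is diagonalizable. Rescaling $e_3$ by $2/t$ turns the eigenvalues into $0$ and $2$, hence $\mathfrak g\cong\mathfrak g(0)$.

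\textbf{Rank-one parametrization.} Because $D$ has rank one, I will write $D=w\otimes\alpha$ with $w=(w_1,w_2)^T$ and $\alpha=(a_1,a_2)$, that is, $d_{ij}=w_ia_j$. Then $t=\alpha(w)=w_1a_1+w_2a_2$. The harmonicity condition becomes $w_1a_2+w_2a_1=2\sigma t$.

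Now take the basis
\[
x=w_1e_1+w_2e_2,\qquad y=a_2e_1-a_1e_2,\qquad z=\tfrac{2}{t}e_3 .
\]
Here $x$ spans the image of $D$ and $Dx=tx$, while $y$ spans the kernel since $\alpha(y)=0$. Consequently $[x,y]=0$, $[z,x]=2x$ and $[z,y]=0$.

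\textbf{The metric in this basis.} Since $e_3\perp\mathfrak h$, the metric is block diagonal with $g(z,z)=4/t^2$. On $\mathfrak h$ we get
\[
g(x,x)=w_1^2-w_2^2,\qquad g(y,y)=a_2^2-a_1^2,\qquad g(x,y)=w_1a_2+w_2a_1=2\sigma t .
\]
The key step is the identity
\[
(w_1^2-w_2^2)(a_2^2-a_1^2)=(w_1a_2+w_2a_1)^2-(w_1a_1+w_2a_2)^2 ,
\]
in which the cross terms cancel. It gives $g(x,x)g(y,y)=4t^2-t^2=3t^2>0$. Hence $g(x,x)$ and $g(y,y)$ are both nonzero and have the same sign $\epsilon$. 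Moreover, the scale-invariant ratio satisfies
\[
\frac{g(x,y)^2}{g(x,x)\,g(y,y)}=\frac43 .
\]

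\textbf{Normalizing by automorphisms.} The maps $x\mapsto ax$, $y\mapsto by$ with $a,b\neq0$ and $z$ fixed are automorphisms of $\mathfrak g(0)$. First choose $a=1/\sqrt{|g(x,x)|}$, so that $g(x,x)=\epsilon$. Then choose $b$, with the appropriate sign, so that $g(x,y)=1$. The invariant ratio then forces $g(y,y)=3\epsilon/4$. The $z$-entry is $\nu=4/t^2>0$, which yields the stated normal form.

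I expect the main obstacle to be showing that the degenerate cases $g(x,x)=0$ or $g(y,y)=0$ cannot occur. Those cases would lead to the other normal forms for $\mathfrak g(0)$ in Theorem~\ref{thm:Ha_Lee_g(c)_c<1}. The rank-one factorization handles this cleanly, because it shows $g(x,x)g(y,y)=3t^2\neq0$ directly.
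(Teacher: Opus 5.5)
Your proof is correct. Its skeleton matches the paper's: a basis with $x\in\mathrm{im}(D)$, $y\in\ker D$, $z=\frac{2}{t}e_3$, followed by normalisation with the diagonal automorphisms $x\mapsto ax$, $y\mapsto by$ of $\mathfrak g(0)$.

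The execution differs from the paper in two useful ways.

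First, the paper writes the eigenvectors from the matrix entries, $x=d_{12}e_1+d_{22}e_2$ and $y=-d_{12}e_1+d_{11}e_2$. In your notation these are $a_2x$ and $-w_1y$, so they degenerate when $d_{12}=w_1a_2=0$. The paper therefore has to treat $d_{12}=0$ separately with a second basis, and there $\epsilon=-1$ is forced. Your factorisation $d_{ij}=w_ia_j$ handles both cases at once.

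Second, the paper uses the facts that $g|_{\mathfrak h}$ is Lorentzian and $\mathrm{Scal}(g)=0$ to cut the list in Theorem~\ref{thm:Ha_Lee_g(c)_c<1} down to two families. It then computes $\tau=g_{11}g_{22}/g_{12}^2=3/4$, leaving $g_{11}\neq0$ implicit. Your identity is the Gram determinant $g(x,x)g(y,y)-g(x,y)^2=-t^2$ in signature $(1,1)$. It gives $g(x,x)g(y,y)=3t^2>0$ directly. From this the non-vanishing and common sign of the diagonal entries, and the value $\tau=3/4$, follow in one line, with no appeal to the classification.

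The paper's route places the metric explicitly within the Ha--Lee list; yours is shorter and self-contained.
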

\begin{proof}
In this case $D$ has a zero and a non-zero eigenvalue. 
Hence, $\mathfrak g$ is isomorphic to $\mathfrak g(0)$. 
Moreover, the scalar curvature of $g$ is zero since $\mathrm{Scal}(g)=2\det(D)=0$.

Suppose that $d_{12}\neq0$ and consider the basis $\{x,y,z\}$ given by 
\[x=d_{12}e_1+d_{22}e_2,\qquad y=-d_{12}e_1+d_{11}e_2,\qquad z=\frac{2}{\mathrm{tr}(D)}e_3.\]
The basis satisfies $[x,y]=0$, $[z,x]=2x$, and $[z,y]=0$, and the matrix of $g$ in this basis is 
\[g=
\begin{pmatrix}
g_{11}&g_{12}&0\\
g_{12}&g_{22}&0\\
0&0&g_{33}
\end{pmatrix}
\coloneqq
\begin{pmatrix}
d_{12}^2-d_{22}^2&-d_{12}^2-d_{11}d_{22}&0\\
-d_{12}^2-d_{11}d_{22}&d_{12}^2-d_{11}^2&0\\
0&0&4/\mathrm{tr}(D)^2
\end{pmatrix}.
\]
Since the restriction of $g$ to $\mathrm{Span}_{\mathbb R}\{x,y\}$ is Lorentzian, and $\mathrm{Scal}(g)=0$, we conclude that only two of the (families of) metrics on $\mathfrak g(0)$ from Theorem \ref{thm:Ha_Lee_g(c)_c<1} could admit left-invariant harmonic spinors. 
We now identify them.
Set
\[A=\mathrm{diag}\left(\frac{1}{\sqrt{|g_{11}|}},\frac{\sqrt{|g_{11}|}}{g_{12}},1\right)\in\mathrm{Aut}(\mathfrak g(0)),\]
then for $\epsilon = \mathrm{sgn}(g_{11}) = \pm 1$ we have
\[
A^TgA=
\begin{pmatrix}
\epsilon &1&0\\
1&\epsilon\tau&0\\
0&0&\nu
\end{pmatrix},
\qquad
\tau=\frac{g_{11}g_{22}}{g_{12}^2}=\frac{3}{4},\quad\nu=g_{33}=\frac{4}{\mathrm{tr}(D)^2}>0.
\]
Suppose that $d_{12}=0$ and consider the basis $\{x,y,z\}$ given by 
\[x=d_{11}e_1+d_{21}e_2,\qquad y=-d_{22}e_1+d_{21}e_2,\qquad z=\frac{2}{\mathrm{tr}(D)}e_3.\]
The basis satisfies $[x,y]=0$, $[z,x]=2x$, and $[z,y]=0$, and the matrix of $g$ in this basis is 
\[g=
\begin{pmatrix}
d_{11}^2-4\mathrm{tr}(D)^2&-4\mathrm{tr}(D)^2&0\\
-4\mathrm{tr}(D)^2&d_{22}^2-4\mathrm{tr}(D)^2&0\\
0&0&4/\mathrm{tr}(D)^2
\end{pmatrix},
\] 
where we have used $d_{11}d_{22}=0$ and $d_{21}=2\sigma\mathrm{tr}(D)$. 
In fact, since $d_{11}d_{22}=0$, either $d_{11}=0$ or $d_{22}=0$. 
In both cases $g_{11}=d_{11}^2-4\mathrm{tr}(D)^2<0$. 
Note that in this case the restriction of $g$ to $\mathrm{Span}_{\mathbb R}\{x,y\}$ is also Lorentzian. 
Taking 
\[A=\mathrm{diag}\left(\frac{1}{\sqrt{-g_{11}}},\frac{\sqrt{-g_{11}}}{g_{12}},1\right)\in\mathrm{Aut}(\mathfrak g(0))\] we obtain 
\[A^TgA
=
\begin{pmatrix}
-1&1&0\\
1&-\tau&0\\
0&0&\nu
\end{pmatrix},
\qquad\tau=\frac{3}{4},\qquad\nu=\frac{4}{\mathrm{tr}(D)^2}>0,
\] which is one of the metrics above.
\end{proof}

\begin{lemma}
Let $(\mathfrak g=\mathfrak h\rtimes_D\mathbb R,g)$ be a non-unimodular three-dimensional non-isotropic almost Abelian Lorentzian Lie algebra with $\mathfrak h$ Lorentzian and $\mathrm{tr}(D)^2>4\det(D)\neq0$. 
If it admits left-invariant harmonic spinors, then: 
\begin{enumerate}
\item $\mathfrak g\cong\mathfrak g(-3)$ and $g$ is equivalent to one of the following metrics
\[
\begin{pmatrix}
0&1&0\\
1&\epsilon&0\\
0&0&\mu
\end{pmatrix},
\qquad \epsilon \in \{0,\pm 1\}, \quad
\mu>0.
\]
\item $\mathfrak g\cong\mathfrak g(c)$ with $c<1$ and $c\notin\{-3,0\}$ and $g$ is equivalent to one of the following metrics
\[
\begin{pmatrix}
\epsilon&1&0\\
1&\epsilon(c+3)/4&0\\
0&0&\mu
\end{pmatrix},
\qquad \epsilon = \pm1, \quad \mu>0.
\]
\end{enumerate}
\end{lemma}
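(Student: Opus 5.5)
Since $\mathrm{tr}(D)^2>4\det(D)\neq0$, the matrix $D$ has two distinct non-zero real eigenvalues $\lambda_+\neq\lambda_-$ with $\lambda_++\lambda_-=\mathrm{tr}(D)\neq0$. After rescaling $e_3$ by $2/\mathrm{tr}(D)$ the eigenvalues become $1\pm w$ with $w=\sqrt{1-c}$, where $c\coloneqq 4\det(D)/\mathrm{tr}(D)^2<1$ and $c\neq0$. Hence $\mathfrak g\cong\mathfrak g(c)$, as in Theorem \ref{thm:Ha_Lee_g(c)_c<1}. The harmonicity condition is $d_{12}+d_{21}=2\sigma\,\mathrm{tr}(D)$, and the scalar curvature is $\mathrm{Scal}(g)=2\det(D)=\tfrac{c}{2}\mathrm{tr}(D)^2\neq0$; these are the two constraints I will use throughout.

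The plan follows the previous lemmas. First, I build a basis $\{x,y,z\}$ adapted to the eigenvectors. Let $x,y$ be eigenvectors of $D$ in $\mathfrak h=\mathrm{Span}_{\mathbb R}\{e_1,e_2\}$ for $\lambda_\pm$, for instance $x=d_{12}e_1+(\lambda_+-d_{11})e_2$ and $y=d_{12}e_1+(\lambda_--d_{11})e_2$ when $d_{12}\neq0$, with the analogous choice from the other row when $d_{12}=0$. Set $z=\tfrac{2}{\mathrm{tr}(D)}e_3$. Then $[z,x]=(1+w)x$, $[z,y]=(1-w)y$, and $g=\begin{pmatrix}g_{11}&g_{12}&0\\ g_{12}&g_{22}&0\\0&0&4/\mathrm{tr}(D)^2\end{pmatrix}$ with $g|_{\mathrm{Span}\{x,y\}}$ Lorentzian, so $g_{12}^2>g_{11}g_{22}$.

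The automorphisms of $\mathfrak g(c)$ preserving this block form are the diagonal matrices $\mathrm{diag}(a,b,1)$. When $w\neq1$ they can only rescale $x$ and $y$ independently; when $w=1$ (that is, $c=0$) there is more freedom, but that case is excluded. So the invariant of $g$ on $\mathrm{Span}\{x,y\}$ is the ratio $\tau\coloneqq g_{11}g_{22}/g_{12}^2<1$, when $g_{12}\neq0$, together with the signs of $g_{11}$ and $g_{22}$. The core step is to translate $d_{12}+d_{21}=2\sigma\,\mathrm{tr}(D)$ into this invariant. With $\varepsilon_1=1,\varepsilon_2=-1$ we have $g_{11}=d_{12}^2-(\lambda_+-d_{11})^2$ and similar expressions for $g_{12},g_{22}$. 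I expect that eliminating $d_{11},d_{12},d_{21}$ via the trace and determinant, together with the harmonic condition, gives $\tau=(c+3)/4$ after normalising $|g_{11}|=1$ and $g_{12}=1$. This identifies the metric $\left(\begin{smallmatrix}\epsilon&1&0\\1&\epsilon(c+3)/4&0\\0&0&\mu\end{smallmatrix}\right)$ from the list of Theorem \ref{thm:Ha_Lee_g(c)_c<1} (up to the relabelling $\eta=-\tau$ used there). Alternatively, I can get $\tau$ by matching $\mathrm{Scal}(g)=2c/\mu$ with the scalar curvature of that Ha--Lee family (from \cite{ha-lee2}), exactly as in the $c>1$ lemma.

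The special value $c=-3$ makes $\tau=0$, so one of $g_{11},g_{22}$ vanishes. Then one eigenvector is null, and the normal form degenerates to $\left(\begin{smallmatrix}0&1&0\\1&\epsilon&0\\0&0&\mu\end{smallmatrix}\right)$ with $\epsilon\in\{0,\pm1\}$ according to the sign of the remaining diagonal entry; here rescaling by $\mathrm{diag}(a,b,1)$ normalises it to $0$ or $\pm1$. I will also check which isotropic-eigenvector configuration is compatible with $\sigma=\pm1$. For example, swapping the roles of $\lambda_\pm$ should give both $\epsilon=\pm1$ and $\epsilon=0$ corresponding to $g_{11}=g_{22}=0$. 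I will also verify that the $g_{12}=0$ case cannot occur, by showing it forces $\mathrm{tr}(D)=0$.

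The main obstacle is the algebraic elimination in the core step. It requires handling the cases $d_{12}=0$ versus $d_{12}\neq0$ separately and tracking the signs of $g_{11},g_{22}$ so that $\epsilon$ is correctly identified. I would first try the scalar-curvature comparison with the Ha--Lee tables, since it sidesteps most of the elimination, and then confirm the sign cases directly.
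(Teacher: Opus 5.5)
The gap is in the case $c=-3$, and it cannot be repaired, because the statement itself is incomplete there. Apart from that step, your plan is the paper's own argument: an eigenvector basis $x,y$ with $z=\tfrac{2}{\mathrm{tr}(D)}e_3$, the automorphisms $\mathrm{diag}(a,b,1)$, the invariant $\tau=g_{11}g_{22}/g_{12}^2$, scalar-curvature matching, and a separate $d_{12}=0$ case.

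The elimination you anticipate does work:
\begin{itemize}
\item $g_{12}=d_{12}(d_{12}+d_{21})=2\sigma\,\mathrm{tr}(D)\,d_{12}$.
\item $g_{11}g_{22}=d_{12}^2\bigl((d_{12}-d_{21})^2-(d_{11}-d_{22})^2\bigr)$.
\item Together with the harmonic condition these give $\tau=\bigl(3\,\mathrm{tr}(D)^2+4\det(D)\bigr)/\bigl(4\,\mathrm{tr}(D)^2\bigr)=(c+3)/4$.
\item $g_{12}=0$ would make $D$ self-adjoint for $g$, i.e.\ $d_{12}+d_{21}=0$, which forces $\mathrm{tr}(D)=0$.
\end{itemize}
A minor point: the $\lambda_+$-eigenvector satisfies $[z,x]=(1+w)x$ only when $\mathrm{tr}(D)>0$; otherwise the labels swap.

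At $c=-3$, the value $\tau=0$ only gives $g_{11}g_{22}=0$. You then assert that the normal form is always $\left(\begin{smallmatrix}0&1&0\\1&\epsilon&0\\0&0&\mu\end{smallmatrix}\right)$, i.e.\ that the eigenvector $x$ with $[z,x]=3x$ is null. The configuration $g_{22}=0\neq g_{11}$ also occurs: the $(-1)$-eigenvector is null and the $3$-eigenvector is not. It normalises to $\left(\begin{smallmatrix}\epsilon&1&0\\1&0&0\\0&0&\mu\end{smallmatrix}\right)$ with $\epsilon=\pm1$.

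``Swapping $\lambda_\pm$'' is not available. No rescaling of $z$ maps $\{3,-1\}$ to itself, so every automorphism of $\mathfrak g(-3)$ sends $z\mapsto z+h$ with $h\in\mathrm{Span}_{\mathbb R}\{x,y\}$. Hence it preserves the lines $\mathbb Rx$ and $\mathbb Ry$, and whether $x$ is null is an invariant.

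Here is an explicit counterexample. Take $\left(\begin{smallmatrix}1&1&0\\1&0&0\\0&0&\mu\end{smallmatrix}\right)$ on $\mathfrak g(-3)$ with the orthonormal basis $e_1=x$, $e_2=x-y$, $e_3=z/\sqrt{\mu}$. Then $D=\mu^{-1/2}\left(\begin{smallmatrix}3&4\\0&-1\end{smallmatrix}\right)$, so every hypothesis of the lemma holds and $d_{12}+d_{21}=2\,\mathrm{tr}(D)$. Harmonic spinors therefore exist, yet this metric is in neither (1) nor (2).

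Conceptually, in the null basis $e_1\pm e_2$ the harmonic condition puts $\tfrac32\mathrm{tr}(D)$ and $-\tfrac12\mathrm{tr}(D)$ on the diagonal of $D$. Moreover $c=-3$ if and only if $D$ is triangular in that basis. Which triangular form occurs decides which eigenvector is null, and both occur.

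The paper's proof has the same hole. Its case $g_{11}\neq0$ yields $\tau=(c+3)/4$, and nothing in it excludes $c=-3$. What the argument actually proves is (1) together with (2) for all $c<1$, $c\neq0$, including $c=-3$, where the second family has $\tau=0$. That corrected statement is the one you should aim to prove.
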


\begin{proof}
In this case $D$ has two non-zero distinct real eigenvalues. 
Hence, $\mathfrak g$ is isomorphic to $\mathfrak g(c)$ with $c<1$ and $c\neq0$. 
Let 
\[t=\mathrm{tr}(D),\qquad\delta=\det(D),\qquad\lambda_{\pm}=\frac{1}{2}(t\pm\sqrt{t^2-4\delta}),\qquad w=\sqrt{1-\frac{4\delta}{t^2}}.\]
Suppose that $d_{12}\neq0$ and consider the basis $\{x,y,z\}$ given by 
\[x=d_{12}e_1+(\lambda_+-d_{11})e_2,\qquad y=d_{12}e_1+(\lambda_--d_{11})e_2,\qquad z=\frac{2}{t}e_3.\]
The basis satisfies $[x,y]=0$, $[z,x]=(1+w)x$, and $[z,y]=(1-w)y$, and the matrix of $g$ in this basis is 
\[
g=(g_{ij})
\coloneqq
\begin{pmatrix}
d_{12}^2-(\lambda_+-d_{11})^2&d_{12}^2-(\lambda_+-d_{11})(\lambda_--d_{11})&0\\
d_{12}^2-(\lambda_+-d_{11})(\lambda_--d_{11})&d_{12}^2-(\lambda_--d_{11})^2&0\\
0&0&4/t^2
\end{pmatrix}.
\]
We note that $g_{12}=2\sigma td_{12}\neq0$ and that the restriction of $g$ to $\mathrm{Span}_{\mathbb R}\{x,y\}$ is Lorentzian. 
Moreover, the scalar curvature of $g$ is given by 
\[\mathrm{Scal}(g)=2\det(D)=2\frac{t^2(1-w^2)}{4}=\frac{ct^2}{2},\] 
where $w=\sqrt{1-c}$. 
Next, we identify which metrics on $\mathfrak g\cong\mathfrak g(c)$ with $c<1$ and $c\neq0$ from Theorem~\ref{thm:Ha_Lee_g(c)_c<1} could admit left-invariant harmonic spinors. 
\begin{enumerate}
\item If $g_{11}=0$ and $g_{22}=0$ we take $A=\mathrm{diag}(1/g_{12},1,1)\in\mathrm{Aut}(\mathfrak g(c))$ and therefore 
\[A^TgA
=
\begin{pmatrix}
0&1&0\\
1&0&0\\
0&0&\mu
\end{pmatrix},
\qquad
\mu=g_{33}=\frac{4}{t^2}>0.
\] 
This metric has scalar curvature $-6/\mu$, which is equal to $ct^2/2=2c/\mu$ if and only if $c=-3$.
\item If $g_{11}=0$ and $g_{22}\neq 0$ we take $A=\mathrm{diag}(\sqrt{|g_{22}|}/g_{12},1/\sqrt{|g_{22}|},1)\in\mathrm{Aut}(\mathfrak g(c))$ and therefore 
\[A^TgA
=
\begin{pmatrix}
0&1&0\\
1&\epsilon &0\\
0&0&\mu
\end{pmatrix},
\qquad \epsilon = \mathrm{sgn}(g_{22}) = \pm 1, \quad \mu=g_{33}=\frac{4}{t^2}>0.
\] 
Again, the scalar curvature is $-6/\mu$, which is equal to $ct^2/2=2c/\mu$ if and only if $c=-3$. 
\item If $g_{11}\neq 0$ we take $A=\mathrm{diag}(1/\sqrt{|g_{11}|},\sqrt{|g_{11}|}/g_{12},1)\in\mathrm{Aut}(\mathfrak g(c))$ and therefore for $\epsilon = \mathrm{sgn}(g_{11}) = \pm1$ we have
\[A^TgA
=
\begin{pmatrix}
\epsilon&1&0\\
1&\epsilon\tau&0\\
0&0&\mu
\end{pmatrix},
\qquad\tau=\frac{g_{11}g_{22}}{g_{12}^2}=\frac{c+3}{4},\quad\mu=\frac{4}{t^2}>0.
\] 
For this value of $\tau$, this metric has scalar curvature $2c/\mu$.
\end{enumerate}
Suppose that $d_{12}=0$ and consider the basis $\{x,y,z\}$ given by 
\[x=(d_{11}-d_{22})e_1+d_{21}e_2,\qquad y=d_{21}e_2,\qquad z=\frac{2}{t}e_3.\]
The basis satisfies $[x,y]=0$, $[z,x]=(1+w)x$, and $[z,y]=(1-w)y$ for $w=(d_{11}-d_{22})/t$, and the matrix of $g$ in this basis is 
\[g=
\begin{pmatrix}
g_{11}&g_{12}&0\\
g_{12}&g_{22}&0\\
0&0&g_{33}
\end{pmatrix}
\coloneqq
\begin{pmatrix}
(d_{11}-d_{22})^2-d_{21}^2&-4t^2&0\\
-4t^2&-d_{21}^2&0\\
0&0&4/t^2
\end{pmatrix}.
\]
Arguing as before, we obtain the same five metrics as in the $d_{12}\neq0$ case.
\end{proof}

We conclude with the isotropic case, where the codimension one Abelian ideal $\mathfrak h$ is degenerate with respect to the metric $g$.
\begin{lemma}
Let $(\mathfrak g=\mathfrak h\rtimes_D\mathbb R,g)$ be a non-unimodular three-dimensional isotropic almost Abelian Lorentzian Lie algebra. 
Then it admits left-invariant harmonic spinors if and only if one of the following cases holds. 
\begin{enumerate}
\item $\mathfrak g\cong\mathbb R^2\rtimes_{\mathrm{Id}}\mathbb R$, and $g$ is equivalent to the metric 
\[
\begin{pmatrix}
1&0&0\\
0&0&1\\
0&1&0
\end{pmatrix}.
\]
\item $\mathfrak g\cong\mathfrak g(1)$, and $g$ is equivalent to the metric 
\[
\begin{pmatrix}
0&0&1\\
0&\mu&0\\
1&0&0
\end{pmatrix},\qquad\mu>0.
\]
\item $\mathfrak g\cong\mathfrak g(0)$, and $g$ is equivalent to one of the following metrics 
\[
\begin{pmatrix}
0&0&1\\
0&1&0\\
1&0&0
\end{pmatrix},
\qquad
\begin{pmatrix}
1&0&0\\
0&0&1\\
0&1&0
\end{pmatrix}.
\]
\item $\mathfrak g\cong\mathfrak g(c)$ with $c<1$ and $c\neq0$, and $g$ is equivalent to the metric 
\[
\begin{pmatrix}
1&0&0\\
0&0&1\\
0&1&0
\end{pmatrix}.
\]
\end{enumerate}
\end{lemma}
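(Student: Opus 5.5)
We will specialise Proposition~\ref{prop:Dirac_almost_Abelian_isotropic} to $n=3$. Here $n-2=1$, so the first sum is empty. For a left-invariant spinor this leaves
\[
\slashed{D}\psi=\tfrac14\,\varepsilon_1 d_{12}\,(e^1\wedge e^2\wedge e^3)\psi-\tfrac{\sqrt2}{4}\,\mathrm{tr}(D)\,(e^2-e^3)\psi ,
\]
computed in the basis $\{v_1,v_2,v_3\}$ with $g=v^1\otimes v^1+v^2\odot v^3$ and $\mathfrak h^\perp=\mathrm{Span}_{\mathbb R}\{v_2\}$. We use the representation \eqref{eq:rep_Cl(2,1)}, as in Proposition~\ref{prop:3d_UAA_isotropic}. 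Then the volume element acts as the scalar $\pm i$, and $e^2-e^3$ acts as a nonzero nilpotent $2\times 2$ matrix, since $(e^2-e^3)^2=0$ and $\mathrm{tr}(D)\neq 0$. So $\slashed{D}$ is of the form $\lambda\,\mathrm{Id}+N$ with $N\neq 0$ nilpotent. It has nontrivial kernel iff $\lambda=0$, i.e.\ iff $d_{12}=0$, and the kernel is then one-dimensional. The condition $d_{12}=0$ means that the null direction $v_2$ spanning $\mathfrak h^\perp$ is an eigenvector of $D$. So $D=\left(\begin{smallmatrix}d_{11}&0\\ d_{21}&d_{22}\end{smallmatrix}\right)$ with $d_{11}+d_{22}\neq 0$.

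Next we classify such pairs $(D,g)$ up to isomorphism, splitting by the Jordan type of $D$ as in the non-isotropic lemmas. We use three admissible changes of basis. First, $v_1\mapsto v_1+a v_2$ together with a compensating $v_3\mapsto v_3-av_1-\tfrac{a^2}{2}v_2$; this keeps the metric in the same normal form and can be used to make $v_1$ an eigenvector when $D$ is diagonalisable. Second, the rescaling $v_2\mapsto s v_2$, $v_3\mapsto v_3/s$. Third, rescaling $v_3$, which rescales $D$, combined with the rescaling of the metric allowed by equivalence up to automorphism. The cases are as follows.
\begin{enumerate}
\item If $d_{11}=d_{22}$ and $d_{21}=0$, then $D$ is a multiple of $\mathrm{Id}$, so $\mathfrak g\cong\mathbb R^2\rtimes_{\mathrm{Id}}\mathbb R$. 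Putting $x=v_1$, $y=v_2$ and $z$ a multiple of $v_3$ gives the metric with matrix having $1$ in the $(1,1)$ entry and $y\odot z$.
\item If $d_{11}=d_{22}$ and $d_{21}\neq0$, then $\mathfrak g\cong\mathfrak g(1)$ with $v_2$ spanning $\ker(D-\lambda)$. Taking $x\propto v_2$, $y\propto v_1$ and $z\propto v_3$ should give the first metric of Theorem~\ref{thm:Ha_Lee_g(1)}, with $\mu>0$ free.
\item If $\det D=0$, then $\mathfrak g\cong\mathfrak g(0)$. There are two subcases, according to whether the null eigenvector $v_2$ has eigenvalue $\mathrm{tr}(D)$ or $0$. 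These should produce the two listed metrics, with the null vector equal to $x$ and $y$ respectively.
\item If $D$ has distinct nonzero real eigenvalues, then $\mathfrak g\cong\mathfrak g(c)$ with $c<1$, $c\neq0$. Complex eigenvalues cannot occur because $v_2$ is a real eigenvector.
\end{enumerate}
In each case the converse is immediate: the exhibited metric has $\mathfrak h^\perp$ null and spanned by an eigenvector of $D$, so $d_{12}=0$ and harmonic spinors exist.

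I expect the main obstacle to be the last case. There the null eigenvector may a priori carry either eigenvalue $1+w$ or $1-w$, which would suggest two candidate normal forms. The first candidate is $x\odot z+y^2$. Against it, the statement lists only $x^2+y\odot z$ for $c\neq0$. I would first compute explicitly the adapted basis ($x$ a unit spacelike eigenvector, $y$ the null eigenvector, $z$ a null multiple of $v_3$ corrected by shifts along $x,y$) and compare with the list in Theorem~\ref{thm:Ha_Lee_g(c)_c<1}. As a cross-check I would use scalar curvature and Ricci signature, computed via the Ricci formula of Lemma~\ref{lemma:Ric=-B_Riemannian}, in order to decide whether the second candidate is equivalent to the first or is excluded. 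A similar check of which eigenvalue the null vector carries settles the two $\mathfrak g(0)$ metrics.
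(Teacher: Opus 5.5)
Your reduction to $d_{12}=0$ is correct. Writing $\slashed{D}=\lambda\,\mathrm{Id}+N$ with $\lambda=-\tfrac{i}{4}d_{12}$ and $N=-\tfrac{\sqrt2}{4}\mathrm{tr}(D)(e^2-e^3)$ a non-zero nilpotent is a cleaner form of the paper's computation $\det\slashed{D}=-\tfrac{1}{16}d_{12}^2$. Your cases 1--3 agree with the paper; your split of $\mathfrak g(0)$ by the eigenvalue of the null eigenvector is exactly its split $d_{11}=0$ versus $d_{22}=0$.

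\textbf{The gap is case 4, and your suspicion there is justified.} The candidate $\left(\begin{smallmatrix}0&0&1\\0&1&0\\1&0&0\end{smallmatrix}\right)$ on $\mathfrak g(c)$, $c<1$, $c\neq0$, is neither excluded nor equivalent to $\left(\begin{smallmatrix}1&0&0\\0&0&1\\0&1&0\end{smallmatrix}\right)$. So neither outcome your plan aims for can be reached.

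\emph{It is not excluded.} This follows from your own criterion. Put $v_1=y$, $v_2=x$, $v_3=z$; then the metric has the normal form $v^1\otimes v^1+v^2\odot v^3$ and $D=\mathrm{diag}(1-w,1+w)$, so $d_{12}=0$. For instance, $D=\mathrm{diag}(1,2)$ gives $c=8/9$, with the null vector carrying the larger eigenvalue.

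\emph{It is not equivalent.} Both eigenvalues $1\pm w$ are non-zero, so every $A\in\mathrm{Aut}(\mathfrak g(c))$ preserves $\mathfrak h=[\mathfrak g,\mathfrak g]$. Moreover $Az\in\alpha z+\mathfrak h$ with $ADA^{-1}=\alpha D$ on $\mathfrak h$, and taking traces ($\mathrm{tr}D=2$) gives $\alpha=1$. Hence $A|_{\mathfrak h}$ commutes with $D$ and preserves both eigenlines. The $\mathfrak h^\perp$ of $A^TgA$ is $A^{-1}(\mathfrak h^\perp)$. Therefore whether $\mathfrak h^\perp$ is the $(1+w)$- or the $(1-w)$-eigenline is an invariant of the equivalence class, and it separates the two metrics.

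Your Ricci cross-check would not settle this. $\mathrm{Scal}=0$ for both metrics. Also $\mathrm{Ric}=d_{11}(d_{22}-d_{11})\,v^3\otimes v^3$ has coefficient $-2w(1+w)$ for the listed metric and $2w(1-w)$ for the other, so the Ricci signatures differ only when $0<c<1$.

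Consequently the ``only if'' direction of the lemma fails in item (4) as stated. The list there, and in Table~\ref{table:Lorentzian_nonuni}, should contain both metrics, exactly as it does for $\mathfrak g(0)$. The paper's proof misses this in its last case. It uses $x=(d_{11}-d_{22})v_1+d_{21}v_2$, $y=v_2$, $z=\frac{2}{d_{11}+d_{22}}v_3$ with $w=(d_{11}-d_{22})/(d_{11}+d_{22})$, and tacitly takes $w=\sqrt{1-c}>0$. When this quotient is negative, $x$ and $y$ are interchanged relative to Theorem~\ref{thm:Ha_Lee_g(c)_c<1}, and the normal form reached is really the omitted one.

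A minor point: drop the ``rescaling of the metric'' from your third move. Equivalence only allows $g\mapsto A^TgA$, and a homothety would wrongly collapse the modulus $\mu$ in case 2. The normalisations you need come from rescaling $v_2$ and from automorphisms that scale eigenlines.
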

\begin{proof}
Let $\{v_1,v_2,v_3\}$ be a basis of $\mathfrak g$ with $\mathfrak h=\mathrm{Span}_{\mathbb R}\{v_1,v_2\}$ and 
\[D
=
\mathrm{ad}(v_3)|_{\mathfrak h}=
\begin{pmatrix}
d_{11}&d_{12}\\
d_{21}&d_{22}
\end{pmatrix}
\] 
for some $d_{ij}\in\mathbb R$. 
Note that $\mathrm{tr}(D)=d_{11}+d_{22}\neq0$ since $\mathfrak g$ is non-unimodular. 
The metric $g$ in this basis takes the form $g=v^1\otimes v^1+v^2\odot v^3$. 
Let 
\[e_1=v_1, \qquad e_2=\frac{\sqrt{2}}{2}(v_2+v_3), \qquad e_3=\frac{\sqrt{2}}{2}(v_2-v_3).\]
Then $\{e_1,e_2,e_3\}$ is an orthonormal basis of $(\mathfrak g,g)$ with $\varepsilon_1=\varepsilon_2=-\varepsilon_3=1$. 
Let $\psi\in\Sigma$ be a left-invariant spinor. In this situation, by Proposition \ref{prop:Dirac_almost_Abelian_isotropic} we have
\[
\slashed{D}\psi
=\frac{1}{4}\begin{pmatrix}
\sqrt{2}(d_{11} + d_{22})-id_{12}&-\sqrt{2}(d_{11} + d_{22})\\
\sqrt{2}(d_{11} + d_{22})&-\sqrt{2}(d_{11} + d_{22})-id_{12}
\end{pmatrix}\psi.
\]
Left-invariant harmonic spinors exist when $\det(\slashed{D})=-\frac{1}{16}d_{12}^2=0$, so $d_{12}=0$. 
A computation shows that in this situation we have 
\[
B_{\mathfrak g}=
\begin{pmatrix}
0&0&0\\
0&0&0\\
0&0&d_{11}^2+d_{22}^2
\end{pmatrix},
\quad
\mathrm{Ric}(g)
=
\begin{pmatrix}
0&0&0\\
0&0&0\\
0&0&d_{11}(d_{22}-d_{11})
\end{pmatrix},
\quad
\mathrm{Scal}(g)=0.
\]
We consider the following cases.

\textbf{Case $d_{11}-d_{22}=0$.} 
In this case the endomorphism $D$ has two equal non-zero eigenvalues, and it is diagonalizable if and only if $d_{21}=0$.  
\begin{enumerate}
\item If $d_{21}=0$, then the Lie algebra $\mathfrak g$ is isomorphic to $\mathbb R^2\rtimes_{\mathrm{Id}}\mathbb R$. 
Moreover, the metric $g$ is flat, thus it is equivalent to the metric 
\[
\begin{pmatrix}
1&0&0\\
0&0&1\\
0&1&0
\end{pmatrix}.
\]
\item If $d_{21}\neq0$, then the Lie algebra $\mathfrak g$ is isomorphic to $\mathfrak g(1)$, $g$ is flat and equivalent to
\[
\begin{pmatrix}
0&0&1\\
0&\mu&0\\
1&0&0
\end{pmatrix},
\qquad\mu>0.
\]
\end{enumerate}

\textbf{Case $d_{11}=0$.} 
Here the endomorphism $D$ has one zero and one non-zero eigenvalue, the Lie algebra $\mathfrak g$ is isomorphic to $\mathfrak g(0)$, $g$ is flat and equivalent to 
\[
\begin{pmatrix}
0&0&1\\
0&1&0\\
1&0&0
\end{pmatrix}.
\]
    
\textbf{Case $d_{22}=0$.} 
Here the endomorphism $D$ has one zero and one non-zero eigenvalue. 
Hence, the Lie algebra $\mathfrak g$ is isomorphic to $\mathfrak g(0)$. 
However, in this case the metric $g$ is non-flat, although it is scalar-flat and the Ricci endomorphism has rank one. 
Hence, $g$ is equivalent to the metric 
\[
\begin{pmatrix}
1&0&0\\
0&0&1\\
0&1&0
\end{pmatrix}.
\]
    
\textbf{Case $d_{11}(d_{22}-d_{11})\neq0$ and $d_{22}\neq0$.} 
In this case the endomorphism $D$ has two non-zero distinct real eigenvalues. 
Hence the Lie algebra $\mathfrak g$ is isomorphic to $\mathfrak g(c)$ with $c<1$ and $c\neq0$. 
We next show that $g$ is equivalent to the metric 
\[
\begin{pmatrix}
1&0&0\\
0&0&1\\
0&1&0
\end{pmatrix}.
\]
Indeed, consider the basis $\{x,y,z\}$ given by 
\[x=(d_{11}-d_{22})v_1+d_{21}v_2,\qquad y=v_2,\qquad z=\frac{2}{d_{11}+d_{22}}v_3.\]
The basis $\{x,y,z\}$ satisfies $[x,y]=0$, $[z,x]=(1+w)x$, and $[z,y]=(1-w)y$, where $w=(d_{11}-d_{22})/(d_{11}+d_{22})$, thus $c=1-w^2=4d_{11}d_{22}/(d_{11}+d_{22})^2<1$.
The matrix of the metric $g$ in this basis is 
\[g=
\frac{1}{d_{11}+d_{22}}
\begin{pmatrix}
(d_{11}+d_{22})(d_{11}-d_{22})^2&0&2d_{21}\\
0&0&2\\
2d_{21}&2&0
\end{pmatrix}.
\]
Taking the automorphism $A\in\mathrm{Aut}(\mathfrak g(c))$ given by
\[
A=\begin{pmatrix}
1/(d_{11}-d_{22})&0&-2d_{21}/(d_{11}+d_{22})(d_{11}-d_{22})^2\\
0&(d_{11}+d_{22})/2&d_{21}^2/(d_{11}+d_{22})(d_{11}-d_{22})^2\\
0&0&1
\end{pmatrix}
\]
we conclude that $A^TgA$ has the claimed form.
\end{proof}

Combining all the previous lemmas we obtain the following result.

\begin{theorem}
\label{thm:3d_nonuni_Lorentzian_classification}
Let $(\mathfrak g,g)$ be a non-unimodular three-dimensional Lorentzian Lie algebra admitting left-invariant harmonic spinors. 
Then $(\mathfrak g,g)$ is one of the cases in \emph{Table \ref{table:Lorentzian_nonuni}}.
\end{theorem}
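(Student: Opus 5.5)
The plan is to reduce to the almost Abelian setting and then assemble the case-by-case lemmas of this subsection. The simple Lie algebras $\mathfrak{su}(2)$ and $\mathfrak{sl}(2,\mathbb R)$ are unimodular. Hence, by Tables \ref{table:three-dim-lie-algebras-uni}--\ref{table:three-dim-lie-algebras-non-uni}, every non-unimodular three-dimensional Lie algebra is almost Abelian, $\mathfrak g=\mathfrak h\rtimes_D\mathbb R$ with $\mathrm{tr}(D)\neq0$. Moreover, $\mathfrak h$ is then unique by Remark \ref{rmk:heis}, since $\mathfrak{heis}_3$ is unimodular. So for a given Lorentzian metric $g$ exactly one of three situations occurs:
\begin{enumerate}
\item $\mathfrak h^\perp$ is null, the isotropic case;
\item $\mathfrak h^\perp$ is non-degenerate and $g|_{\mathfrak h}$ is Riemannian;
\item $\mathfrak h^\perp$ is non-degenerate and $g|_{\mathfrak h}$ is Lorentzian.
\end{enumerate}
In each situation I would choose an adapted orthonormal basis and apply Proposition \ref{prop:Dirac_almost_Abelian} or Proposition \ref{prop:Dirac_almost_Abelian_isotropic}.

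Situation (2) is excluded by the first lemma: there $\det(\slashed D)$ is a negative definite expression that vanishes only if $\mathrm{tr}(D)=0$.

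In situation (3), harmonic spinors exist exactly when $d_{12}+d_{21}=2\sigma\,\mathrm{tr}(D)$ for some $\sigma\in\{\pm1\}$. The subsequent lemma rules out $\mathbb R^2\rtimes_{\mathrm{Id}}\mathbb R$, so $\mathfrak g\cong\mathfrak g(c)$. I would then split according to the discriminant $\mathrm{tr}(D)^2-4\det(D)$:
\begin{itemize}
\item zero discriminant gives $\mathfrak g(1)$;
\item negative discriminant gives $\mathfrak g(c)$ with $c>1$;
\item positive discriminant with $\det(D)=0$ gives $\mathfrak g(0)$;
\item positive discriminant with $\det(D)\neq0$ gives $\mathfrak g(c)$ with $c<1$, $c\neq0$, including the special value $c=-3$.
\end{itemize}
Each of these four cases is covered by one of the lemmas above, which identifies the metric up to automorphism through the Ha--Lee classifications (Theorems \ref{thm:Ha_Lee_g(c)_c>1}--\ref{thm:Ha_Lee_g(c)_c<1}), using the scalar curvature $\mathrm{Scal}(g)=2\det(D)$ to fix the parameters.

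Situation (1) is covered by the isotropic lemma. There the condition is $d_{12}=0$, and the resulting metrics are the $\mathbb R^2\rtimes_{\mathrm{Id}}\mathbb R$, $\mathfrak g(1)$, $\mathfrak g(0)$ and $\mathfrak g(c)$ entries of Table \ref{table:Lorentzian_nonuni}.

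The final step is bookkeeping: collect the metrics from all lemmas and check that they match the rows of Table \ref{table:Lorentzian_nonuni}. In particular, for $\mathfrak g(-3)$ I would verify that the family with $\epsilon=0$, arising from the $g_{11}=g_{22}=0$ case, is included alongside $\epsilon=\pm1$. I would also check that the $\mathfrak g(c)$ family with $c<1$ from case (3) of the last non-isotropic lemma is listed with $c\notin\{-3,0\}$; at $c=-3$ the formula $\tau=(c+3)/4=0$ gives the metric $\left(\begin{smallmatrix}\epsilon&1\\1&0\end{smallmatrix}\right)\oplus(\mu)$. Note that this is a different normalisation from the $\mathfrak g(-3)$ row of the table.

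The main obstacle is this consistency check at the special values $c=-3$ and $c=0$. One must confirm, via an automorphism of $\mathfrak g(-3)$ or via Theorem \ref{thm:Ha_Lee_g(c)_c<1}, that the $\tau=0$ metric is equivalent to the $\epsilon=0$ entry of the $\mathfrak g(-3)$ row. This amounts to applying the diagonal automorphism $\mathrm{diag}(a,b,1)$ and swapping the roles of $x,y$, which is allowed only if the eigenvalue ordering is compatible. If that swap fails, I would instead add the metric to the table explicitly. Beyond this, the argument is a direct union of the preceding lemmas, since each lemma was an if-and-only-if or gave exactly the listed normal forms.
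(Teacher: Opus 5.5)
Your assembly of the lemmas is exactly the paper's proof, and everything up to the final bookkeeping is fine. The gap is the $c=-3$ check you flag, and it cannot be closed the way you propose.

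\textbf{No swap exists.} Any $A\in\mathrm{Aut}(\mathfrak g(-3))$ preserves $\mathfrak h=[\mathfrak g,\mathfrak g]=\mathrm{Span}_{\mathbb R}\{x,y\}$ and sends $z\mapsto\alpha z+h$ with $h\in\mathfrak h$. So $P=A|_{\mathfrak h}$ satisfies $PDP^{-1}=\alpha D$ with $D=\mathrm{diag}(3,-1)$. Matching spectra $\{3,-1\}=\{3\alpha,-\alpha\}$ forces $\alpha=1$, since the crossed matching would need both $3\alpha=-1$ and $-\alpha=3$. Hence $P$ is diagonal, and no automorphism swaps $x$ and $y$.

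Consequently, whether each of $g(x,x)$, $g(y,y)$, $g(x,y)$ vanishes is an invariant. The $\tau=0$ metrics $\left(\begin{smallmatrix}\epsilon&1&0\\1&0&0\\0&0&\mu\end{smallmatrix}\right)$, $\epsilon=\pm1$, have $g(x,x)\neq0=g(y,y)$ and $g(x,y)\neq0$. So they are equivalent to no entry of the $\mathfrak g(-3)$ row. Even if a swap existed, it would have produced the $\epsilon=\pm1$ entries, not the $\epsilon=0$ one, which comes from $g_{11}=g_{22}=0$.

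\textbf{These metrics do carry harmonic spinors.} The identity $g_{11}g_{22}/g_{12}^2=(c+3)/4$ in case (3) of the last non-isotropic lemma never uses $c\neq-3$.

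Concretely, for $g=\left(\begin{smallmatrix}1&1&0\\1&0&0\\0&0&1\end{smallmatrix}\right)$ take $e_1=x$, $e_2=x-y$, $e_3=z$. Then $\varepsilon=(1,-1,1)$ and $D=\mathrm{ad}(e_3)|_{\mathfrak h}=\left(\begin{smallmatrix}3&4\\0&-1\end{smallmatrix}\right)$. This gives $d_{12}+d_{21}=4=2\,\mathrm{tr}(D)$, so $\det(\slashed{D})=\frac14\mathrm{tr}(D)^2-\frac1{16}(d_{12}+d_{21})^2=0$.

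For $\epsilon=-1$, the choice $e_1=x+y$, $e_2=x$ gives $D=\left(\begin{smallmatrix}-1&0\\4&3\end{smallmatrix}\right)$, again with $d_{12}+d_{21}=2\,\mathrm{tr}(D)$. Both metrics are legitimate Ha--Lee normal forms ($\tau=0$, respectively $\eta=0$, in Theorem \ref{thm:Ha_Lee_g(c)_c<1}).

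So the statement as written is false: Table \ref{table:Lorentzian_nonuni} omits these two families on $\mathfrak g(-3)$. The cause is that the lemma's statement excludes $c=-3$ although its proof does not, and the paper's ``combine the lemmas'' proof inherits the omission. Your fallback of adding these metrics to the table is the correct repair. But then what you prove is a corrected table, not Theorem \ref{thm:3d_nonuni_Lorentzian_classification} as stated.
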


As as consequence of Theorem \ref{thm:3d_uni_Lorentzian_classification} and Theorem \ref{thm:3d_nonuni_Lorentzian_classification} we get the following fact.
\begin{corollary}
\label{cor:three-dim-lorentz-harmonic-spinors}
All three-dimensional Lie algebras admit at least one Lorentzian metric carrying left-invariant harmonic spinors.
\end{corollary}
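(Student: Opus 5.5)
The corollary follows by walking through the complete list of three-dimensional real Lie algebras and exhibiting, for each one, a Lorentzian metric from the previous results. I would use Tables \ref{table:three-dim-lie-algebras-uni}--\ref{table:three-dim-lie-algebras-non-uni} together with the Abelian algebra $\mathbb R^3$. The Abelian case is immediate from the remark after Proposition \ref{prop:formula_Dirac_operator}: every left-invariant spinor is harmonic for every metric, in particular for any Lorentzian one.

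\textbf{Unimodular non-Abelian algebras.} Theorem \ref{thm:3d_uni_Lorentzian_classification} and Table \ref{table:Lorentzian_uni} supply a metric in each case.
For $\mathfrak{su}(2)$, take $\mathrm{diag}(\mu,\nu,-(\mu+\nu))$ with $\mu\geq\nu>0$.
For $\mathfrak{sl}(2,\mathbb R)$, take the metric (sll2) with $\nu>\mu>0$.
For $\mathfrak e(2)$, take (ee2) with $u=-v<0$.
For $\mathfrak e(1,1)$, take (sol7).
For $\mathfrak{heis}_3$, take (nil0).

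Since the parameter constraints in that table are only inequalities or linear relations, each family is nonempty. I would double-check that the recorded metrics are genuinely Lorentzian. For example, $\mathrm{diag}(\nu-\mu,-\mu,\nu)$ with $\nu>\mu>0$ has signature $(2,1)$. I would also confirm that each one really carries harmonic spinors, by noting that the explicit orthonormal bases in the corresponding propositions satisfy the brackets \eqref{eq:Lie_brackets_Lor_unimodular}.

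\textbf{Non-unimodular algebras.} Here I would cover the family $\mathbb R^2\rtimes_{\mathrm{Id}}\mathbb R$ together with $\mathfrak g(c)$ for all $c\in\mathbb R$.
The cleanest uniform choice is the isotropic lemma. For $\mathbb R^2\rtimes_{\mathrm{Id}}\mathbb R$, $\mathfrak g(1)$, $\mathfrak g(0)$, and $\mathfrak g(c)$ with $c<1$, $c\neq0$, it gives explicit metrics with harmonic spinors. This covers every $c\le 1$, including $c=-3$.
The remaining case is $c>1$. It is handled by the non-isotropic lemma with $\mathrm{tr}(D)^2<4\det(D)$, which gives the metric with $\tau=-(c+6)\pm4\sqrt{c+3}$. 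I would verify that this $\tau<1$, so that the matrix is Lorentzian, for every $c>1$; for instance, the minus sign gives $\tau<-(c+6)<1$.

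The main subtlety is not a calculation but the existence direction. Those lemmas are phrased as ``if it admits harmonic spinors then $g$ is equivalent to \ldots'', whereas the corollary needs the converse.
For the isotropic lemma this is no issue, since it is an ``if and only if''.
For the $c>1$ case, I would argue directly. Choose an orthonormal basis with $\mathfrak h$ Lorentzian and a matrix $D$ with $d_{12}+d_{21}=2\sigma\,\mathrm{tr}(D)$, $\mathrm{tr}(D)\neq0$ and $4\det D/\mathrm{tr}(D)^2=c$. For instance, take $d_{11}=d_{22}=1$, $d_{12}=2+s$, $d_{21}=2-s$; then $\det D=s^2-3$, so picking $s=\sqrt{c+3}$ gives $4\det D/\mathrm{tr}(D)^2=c$. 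The determinant computation preceding these lemmas then shows directly that $\slashed D$ has nontrivial kernel. Combined with the conversion to the basis $\{x,y,z\}$ in the proof of the $c>1$ lemma, which identifies the algebra as $\mathfrak g(c)$, this yields existence for every $c>1$ and completes the case check.
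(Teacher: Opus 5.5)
Your proposal is correct and takes the paper's own route: the corollary is read off from the classifications in Theorems \ref{thm:3d_uni_Lorentzian_classification} and \ref{thm:3d_nonuni_Lorentzian_classification}, i.e.\ Tables \ref{table:Lorentzian_uni}--\ref{table:Lorentzian_nonuni}, and the Abelian case is trivial. Your extra step adds something the paper leaves implicit: it supplies the existence direction, using that Proposition \ref{prop:3d_Lorentzian_unimodular_harmonic} and the isotropic lemma are equivalences, and exhibiting $D$ with $d_{11}=d_{22}=1$, $d_{12}=2+\sqrt{c+3}$, $d_{21}=2-\sqrt{c+3}$ for $\mathfrak g(c)$, $c>1$, which indeed gives $\det(\slashed{D})=0$.
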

Classifications of indefinite metrics on Lie algebras in arbitrary dimension are out of reach in general. 
However, by looking at the classification results of \cite{boucetta-chakkar,ha-lee3} in dimension three, one can conjecture that for a given Lie algebra $\mathfrak g$, there are a number of inequivalent indefinite metrics on $\mathfrak g$. 
Therefore, inspired by Corollary~\ref{cor:three-dim-lorentz-harmonic-spinors}, we state the following general question, which we believe is open.
\begin{question}
Does every Lie algebra admit at least one indefinite metric carrying left-invariant harmonic spinors?
\end{question}

\section*{Appendix}
\label{appendix:unimodular_Lorentzian}

Here we collect useful classification results from \cite{boucetta-chakkar}. 
We describe all Lorentzian metrics on each unimodular three-dimensional Lie algebra, and the signature of the Ricci tensor of each of these metrics.
We summarise the results in Tables \ref{table:simple-algebras-lorentzian-metrics}--\ref{table:non-simple-algebras-lorentzian-metrics}.
\begin{table}
\centering
\caption{Lorentzian metrics on three-dimensional simple Lie algebras}
\footnotesize
\begin{tabular}{p{10mm}p{20mm}p{45mm}M{30mm}} \toprule
$\mathfrak g$ & \textbf{Lie brackets} & \textbf{Metrics (up to equivalence)} & $\sigma(\mathrm{Ric}(g))$ \\ \midrule
$\mathfrak{su}(2)$ & $
\begin{aligned}
[x,y]&=2z \\
[z,x]&=2y \\
[z,y]&=-2x
\end{aligned}
$ & $(\textup{su})=\begin{pmatrix}
    \mu_1&0&0\\
    0&\mu_2&0\\
    0&0&-\mu_3
\end{pmatrix}$ \newline \vskip0.03cm with $\mu_1\geq \mu_2 > 0$, $\mu_3>0$ & 
$(3,0,0), \mbox{ } \mu_1<\mu_2+\mu_3$
$(1,2,0), \mbox{ } \mu_1>\mu_2+\mu_3$ 
$(1,0,2), \mbox{ } \mu_1=\mu_2+\mu_3$
\\ \cmidrule{1-4}
$\mathfrak{sl}(2,\mathbb R)$ & 
$
\begin{aligned}
[x,y]&=2z \\
[z,x]&=2y \\
[z,y]&=2x
\end{aligned}
$ & 
$(\textup{sll1})=\begin{pmatrix} -\mu_1&0&0\\0&\mu_2&0\\0&0&\mu_3\end{pmatrix}$ \newline \vskip0.03cm with $\mu_1>0$, $\mu_2\geq \mu_3>0$ & 
$(3,0,0), \mbox{ } \mu_3<\mu_1-\mu_2$ 
$(1,2,0), \mbox{ } \mu_3>\mu_1-\mu_2$ 
$(1,0,2), \mbox{ } \mu_3=\mu_1-\mu_2$
$(0,1,2), \mbox{ } \mu_3=\mu_2-\mu_1$ 
\\ \cmidrule{3-4}
& & 
$(\textup{sll2})=\begin{pmatrix} \mu_1&0&0\\0&-\mu_2&0\\0&0&\mu_3\end{pmatrix}$ \newline \vskip0.03cm with $\mu_1, \mu_2, \mu_3>0$ & 
$(3,0,0), \mbox{ } \mu_1<\mu_2-\mu_3$
$(1,2,0), \mbox{ } \mu_1>\mu_2-\mu_3$ 
$(1,0,2), \mbox{ } \mu_1=\mu_2-\mu_3$
$(0,1,2), \mbox{ } \mu_1=\mu_2+\mu_3$ 
\\ \cmidrule{3-4}
& & 
$(\textup{sll3})=K\begin{pmatrix} M&\beta&0\\ \beta&N&0\\0&0&a^2\alpha/N\end{pmatrix}$ \newline \vskip0.03cm with $a \neq 0$, $\alpha>0$, $\beta>0$, \newline $K=4/a^2\alpha N$, $M=(\beta^2-\alpha^2)/N$, \newline $N=\sqrt{\alpha^2+\beta^2}$ & 
$(1,2,0), \mbox{ } a^2 \neq 2\alpha $ 
$(0,1,2), \mbox{ } a^2=2\alpha $
\\ \cmidrule{3-4}
& & 
$(\textup{sll4})=K\begin{pmatrix} -N&0&\beta\\ 0&a^2\alpha/N&0\\\beta&0&M\end{pmatrix}$ \newline \vskip 0.03cm with $a \neq 0$, $\alpha<0$, $\beta>0$, \newline $K=4/a^2\alpha N$, $M=(\beta^2-\alpha^2)/N$, \newline $N=\sqrt{\alpha^2+\beta^2}$ & 
$(1,2,0)$
\\ \cmidrule{3-4}
& & 
$(\textup{sll5})=K\begin{pmatrix} u&0&v \\ 0&M&0\\ v&0&u\end{pmatrix}$ \newline \vskip0.03cm with $|u|<v$, $v>0$, \newline $K = 16/(v^2-u^2)$, $M=2(u+v)$ & 
$(1,2,0)$
\\ \cmidrule{3-4}
& & 
$(\textup{sll6})=K\begin{pmatrix} M&-a&0\\-a&N&0\\0&0&8a/b\end{pmatrix}$ \newline \vskip0.03cm with $a,b \neq 0$, $K=1/2ab$, \newline $a=M+8=N-8$ & 
$(1,2,0), \mbox{ } a\neq 2b$ 
$(0,1,2), \mbox{ } a=2b$
\\ \cmidrule{3-4}
& & 
$(\textup{sll7})=K\begin{pmatrix} M&N&0\\ N&S&R\\0&R&4a^4\end{pmatrix}$ \newline \vskip0.03cm with $a>0$, $K=2/a^4(1+2a^2)$, $M=1-4a^4$, $N=(1+2a^2)^{3/2}$, $S=4a^4+6a^2+1$, $R=2a^3\sqrt 2$ & 
$(1,2,0)$ \\
\bottomrule
\end{tabular}
\label{table:simple-algebras-lorentzian-metrics}
\end{table}

\begin{table}
\centering
\caption{Lorentzian metrics on three-dimensional unimodular non-simple Lie algebras}
\footnotesize
\begin{tabular}{p{10mm}p{20mm}p{45mm}M{30mm}} \toprule
$\mathfrak g$ & \textbf{Lie brackets} & \textbf{Metrics (up to equivalence)} & $\sigma(\mathrm{Ric}(g))$ \\ \midrule
$\mathfrak{e}(2)$ & $
\begin{aligned}
[x,y]&=z \\
[z,x]&=y \\
[z,y]&=0
\end{aligned}
$ & 
$(\textup{ee1})=\begin{pmatrix}
    0&1&0\\
    1&u&0\\
    0&0&v
\end{pmatrix}$ 
\newline with $u\geq v > 0$ & 
$(0,0,3), \mbox{ } u=v$ 
$(1,2,0), \mbox{ } u \neq v$
\\ \cmidrule{3-4}
& & 
$(\textup{ee2})=\begin{pmatrix} 0&-1&0\\-1&u&0\\0&0&v\end{pmatrix}$ \newline  with $u<0$, $v>0$ & 
$(3,0,0), \mbox{ } u<-v$ 
$(1,2,0), \mbox{ } u>-v$ 
$(1,0,2), \mbox{ } u=-v$ 
\\ \cmidrule{3-4}
& & 
$(\textup{ee3})=\begin{pmatrix} 0&1&0\\1&0&0\\0&0&u\end{pmatrix}$ \newline  with $u>0$ & 
$(1,2,0)$ 
\\ \cmidrule{1-4}
$\mathfrak{e}(1,1)$ & 
$
\begin{aligned}
[x,y]&=y \\
[z,x]&=z \\
[z,y]&=0
\end{aligned}
$
& 
$(\textup{sol1})=\begin{pmatrix}
    \frac{4}{u^2-v^2}&0&0\\
    0&1&\frac{u}{v}\\
    0&\frac{u}{v}&1
\end{pmatrix}$ 
\newline  with $|u|<v$, $v>0$ & 
$(1,2,0), \mbox{ } u\neq 0$
$(0,1,2), \mbox{ } u=0$
\\ \cmidrule{3-4}
& & 
$(\textup{sol2})=\begin{pmatrix}
    \frac{4}{u^2-v^2}&0&0\\
    0&\frac{u}{v}&-1\\
    0&-1&\frac{u}{v}
\end{pmatrix}$ 
\newline  with $|u|<v$, $v>0$ & 
$(1,2,0), \mbox{ } u>0$ 
$(3,0,0), \mbox{ } u<0$
$(0,0,3), \mbox{ } u=0$
\\ \cmidrule{3-4}
& & 
$(\textup{sol3})=\begin{pmatrix}
    1/(u+v)&0&0\\
    0&-v/u&1\\
    0&1&1
\end{pmatrix}$ 
\newline with $u>0$, $v>0$ & 
$(1,2,0)$ 
\\ \cmidrule{3-4}
& & 
$(\textup{sol4})=\begin{pmatrix}
   1/u&0&0\\
    0&-1&0\\
    0&0&1
\end{pmatrix}$ 
\newline   with $u>0$ & 
$(0,1,2)$
\\ \cmidrule{3-4}
& & 
$(\textup{sol5})=\begin{pmatrix}
    0&0&-2/u\\
    0&1&1\\
    -2/u&1&1
\end{pmatrix}$ 
\newline with $u>0$ & 
$(1,2,0)$
\\ \cmidrule{3-4}
& & 
$(\textup{sol6})=\begin{pmatrix}
    u^2&0&0\\
    0&u&1\\
    0&1&0
\end{pmatrix}$ 
\newline  with $u\neq 0$ & 
$(0,1,2), \mbox{ } u>0$ 
$(1,0,2), \mbox{ } u<0$
\\ \cmidrule{3-4}
& & 
$(\textup{sol7})=\begin{pmatrix}
    0&0&1\\
    0&1&0\\
    1&0&0
\end{pmatrix}$ 
& 
$(0,1,2)$
\\ \cmidrule{1-4}
$\mathfrak{heis}_3$ &  
$
\begin{aligned}
[x,y]&=z \\
[z,x]&=0 \\
[z,y]&=0
\end{aligned}
$
& 
$(\textup{nil+})=\begin{pmatrix}
    1&0&0\\
    0&-1&0\\
    0&0&\lambda
\end{pmatrix}$ 
\newline   with $\lambda > 0$ & 
$(1,2,0)$ 
\\ \cmidrule{3-4}
& & 
$(\textup{nil-})=\begin{pmatrix}
    1&0&0\\
    0&1&0\\
    0&0&-\lambda
\end{pmatrix}$ 
\newline  with $\lambda > 0$ & 
$(3,0,0)$ 
\\ \cmidrule{3-4}
& & 
$(\textup{nil0})=\begin{pmatrix}
    1&0&0\\
    0&0&1\\
    0&1&0
\end{pmatrix}$ &
$(0,0,3)$ \\
\bottomrule
\end{tabular}
\label{table:non-simple-algebras-lorentzian-metrics}
\end{table}


\clearpage

\begin{thebibliography}{100}

\bibitem{ammann-bar}
B.\ Ammann, C.\ B\"ar, \emph{The Dirac operator on nilmanifolds and collapsing circle bundles}. Ann.\ Glob.\ Anal.\ Geom.\ \textbf{16} (1998), no.\ 3, 221--253, \url{https://doi.org/10.1023/A:1006553302362}.

\bibitem{andrada-barberis-dotti-ovando}
A.\ Andrada, M.\ L.\ Barberis, I.\ G.\ Dotti, and G.\ P.\ Ovando, \emph{Product structures on four-dimensional solvable Lie algebras}. Homology Homotopy Appl.\ \textbf{7} (2005), no.\ 1, 9--37, \url{https://doi.org/10.4310/HHA.2005.v7.n1.a2}.

\bibitem{bfgk}
H.\ Baum, T.\ Friedrich, R.\ Grunewald, and I.\ Kath, Twistors and Killing spinors on Riemannian manifolds. B.\ G.\ Teubner Verlagsgesellschaft mbH, Stuttgart, 1991.

\bibitem{bazzoni-merchan-munoz}
G.\ Bazzoni, L.\ Mart\'in-Merch\'an, and V.\ Mu\~noz, \emph{Spin-harmonic structures and nilmanifolds}. Comm.\ Anal.\ Geom.\ \textbf{32} (2024), no.\ 1, 153--201, \url{https://doi.org/10.4310/CAG.240905215149}.

\bibitem{bianchi}
L.\ Bianchi, \emph{Sugli spazi a tre dimensioni che ammettono un gruppo continuo di movimenti}. Memorie di Matematica e di Fisica della Società Italiana delle Scienze, Serie Terza, Tomo XI (1898), 267--352.

\bibitem{bianchi-essay}
L.\ Bianchi, \emph{On the three-dimensional spaces which admit a continuous group of motions}. Gen.\ Relativ.\ Gravit.\ \textbf{33} (2001), no.\ 12, 2171--2253, \url{https://doi.org/10.1023/A:1015357132699}.

\bibitem{boucetta-chakkar}
M.\ Boucetta, A.\ Chakkar, \emph{The moduli spaces of Lorentzian left-invariant metrics on three-dimensional unimodular simply connected Lie groups}. J.\ Korean Math.\ Soc.\ \textbf{59} (2022), no.\ 4, 651--684, \url{https://doi.org/10.4134/JKMS.j210460}.

\bibitem{conti-gilgarcia}
D.\ Conti, A.\ Gil-Garc\'ia, \emph{Almost Abelian pseudo-K\"ahler Lie algebras}. To appear in Ann.\ Sc.\ Norm.\ Super.\ Pisa Cl.\ Sci., \url{https://doi.org/10.2422/2036-2145.202507_006}.

\bibitem{conti-rossi}
D.\ Conti, F.\ A.\ Rossi, \emph{Einstein nilpotent Lie groups}. J.\ Pure Appl.\ Algebra \textbf{223} (2019), no.\ 3, 976--997, \url{https://doi.org/10.1016/j.jpaa.2018.05.010}.

\bibitem{harmful}
D.\ Conti, F.\ A.\ Rossi, and R.\ Segnan Dalmasso, \emph{Harmful structures and Killing spinors on unimodular Lie groups}. arXiv preprint: \url{https://arxiv.org/abs/2509.08507}, 2025.

\bibitem{freibert}
M.\ Freibert, \emph{Cocalibrated structures on Lie algebras with a codimension one Abelian ideal}. Ann.\ Glob.\ Anal.\ Geom.\ \textbf{42} (2012), no.\ 4, 537--563, \url{https://doi.org/10.1007/s10455-012-9326-0}.

\bibitem{friedrich}
T.\ Friedrich, Dirac Operators in Riemannian Geometry. Institut f\"ur Mathematik, Humboldt-Universit\"at, Berlin, Germany.

\bibitem{ha-lee1}
K.\ Y.\ Ha, J.\ B.\ Lee, \emph{Left invariant metrics and curvatures on simply connected three-dimensional Lie groups}. Math.\ Nachr.\ \textbf{282} (2009), no.\ 6, 868--898, \url{https://doi.org/10.1002/mana.200610777}.

\bibitem{ha-lee2}
K.\ Y.\ Ha, J.\ B.\ Lee, \emph{Left invariant Lorentzian metrics and curvatures on non-unimodular Lie groups of dimension three}. arXiv preprint: \url{https://arxiv.org/abs/2209.02208}, 2022.

\bibitem{ha-lee3}
K.\ Y.\ Ha, J.\ B.\ Lee, \emph{Left invariant Lorentzian metrics and curvatures on non-unimodular Lie groups of dimension three}. J.\ Korean Math.\ Soc.\ \textbf{60} (2023), no.\ 1, 143--165, \url{https://doi.org/10.4134/JKMS.j220238}.

\bibitem{hitchin}
N.\ Hitchin, \emph{Harmonic spinors}. Adv.\ Math.\ \textbf{14} (1974), no.\ 1, 1--55, \url{https://doi.org/10.1016/0001-8708(74)90021-8}.

\bibitem{milnor}
J.\ Milnor, \emph{Curvatures of left-invariant metrics on Lie groups}. Adv.\ Math.\ \textbf{21} (1976), no.\ 3, 293--329, \url{https://doi.org/10.1016/S0001-8708(76)80002-3}.

\bibitem{witten}
E.\ Witten, \emph{Search for a realistic Kaluza--Klein theory}. Nucl.\ Phys.\ B \textbf{186} (1981), no.\ 3, 412--428, \url{https://doi.org/10.1016/0550-3213(81)90021-3}.

\end{thebibliography}
\end{document}